\newcolumntype{L}{>{$}l<{$}} %
\newcolumntype{C}{>{$}c<{$}} %
\newcolumntype{R}{>{$}r<{$}} %
\theoremstyle{plain}
\newtheorem{thm}{Theorem}
\numberwithin{thm}{section}
\numberwithin{equation}{section}
\theoremstyle{definition}
\newaliascnt{defi}{thm}
\newtheorem{defi}[defi]{Definition}
\theoremstyle{plain}
\newaliascnt{lemma}{thm}%
\newtheorem{lemma}[lemma]{Lemma}
\theoremstyle{plain}
\newaliascnt{prop}{thm}%
\newtheorem{prop}[prop]{Proposition}
\theoremstyle{plain}
\newaliascnt{cor}{thm}%
\newtheorem{cor}[cor]{Corollary}
\theoremstyle{definition}
\newaliascnt{ex}{thm}%
\theoremstyle{definition}
\newaliascnt{rem}{thm}%
\newtheorem{rem}[rem]{Remark}
\theoremstyle{plain}
\DeclarePairedDelimiter{\abs}{\lvert}{\rvert}
\DeclarePairedDelimiter{\norm}{\lVert}{\rVert}
\DeclareMathOperator{\tr}{tr}
\DeclareMathOperator{\jac}{Jac}
\DeclareMathOperator{\diag}{diag}
\DeclareMathOperator{\essinf}{essinf}
\DeclareMathOperator{\argmax}{arg\, max}
\newcommand{\R}{\mathbb{R}}
\newcommand{\M}{\mathcal{M}}
\renewcommand{\S}{\mathbb{S}}
\newcommand{\N}{\mathbb{N}}
\newcommand{\ssubset}{\subset\joinrel\subset}
\newcommand{\Hn}{\mathcal{H}^{n-1}}
\newcommand{\eps}{\varepsilon}
\DeclareMathOperator{\divv}{div}
\DeclareMathOperator{\supp}{supp}
\DeclareMathOperator{\sgn}{sgn}
\DeclareMathOperator{\id}{Id}
\newcommand{\measurerestr}{\raisebox{-.250ex}{$\mathbin{\vrule height 2ex depth 0pt width 0.13ex\vrule height 0.13ex depth 0pt width .7ex}$}}
\newcommand\mrestr[2]{
  #1 \,\measurerestr_{\,#2} %
  }
\newcommand\restr[2]{{%
  \mleft.\kern-\nulldelimiterspace %
  #1 %
  \littletaller %
  \mright|_{#2} %
  }}
\newcommand{\littletaller}{\mathchoice{\vphantom{\big|}}{}{}{}}
\newcommand{\leqnomode}{\tagsleft@true\let\veqno\@@leqno}
\newcommand{\reqnomode}{\tagsleft@false\let\veqno\@@eqno}
\newenvironment{proofItemize}{\begin{itemize}[align=right,itemindent=1em,labelsep=.5em,labelwidth=1em,leftmargin=0pt,nosep]}{\end{itemize}}
\newcommand{\om}{\omega}
\newcommand{\Om}{\Omega}
\newcommand{\al}{\alpha}
\newcommand{\la}{\lambda}
\newcommand{\La}{\Lambda}
\newcommand{\red}[1]{#1}
\newcommand{\de}{\delta}
\newcommand{\De}{\Delta}
\newcommand{\na}{\nabla}
\newcommand{\be}{\beta}
\newcommand{\Ga}{\Ga}
\newcommand{\vp}{\varphi}
\newcommand{\tht}{\theta}
\newcommand{\ze}{\zeta}
\newcommand{\pa}{\partial}
\newcommand{\tx}{\tilde{x}}
\newcommand{\fr}[2]{\frac{#1}{#2}}
\newcommand{\sbs}{\subset}
\newcommand{\sbse}{\subseteq}
\newcommand{\Spn}{\mathbb{S}^{n-1}}
\newcommand{\sm}{\setminus}
\newcommand{\li}{\liminf}
\newcommand{\ml}{\mleft}
\newcommand{\mr}{\mright}
\newcommand{\sh}{\sharp}
\newcommand{\hu}{\widehat{u}}
\newcommand{\hw}{\widehat{w}}
\newcommand{\hg}{\widehat{g}}
\newcommand{\hnu}{\widehat{\nu}}
\newcommand{\hup}{\widehat{u'_t}}
\newcommand{\hwp}{\widehat{w'_t}}
\newcommand{\F}{\mathcal{F}}
\newcommand{\G}{\mathcal{G}}
\newcommand{\Ll}{\mathcal{L}}
\newcommand{\J}{\mathcal{J}}
\newcommand{\D}{\mathcal{D}}
\newcommand{\tphi}{{\widetilde{\Phi}}}
\newcommand{\tPhi}{\tphi}
\newcommand{\tEde}{\widetilde{E}_\de}
\title{Sharp quantitative Talenti inequality in particular cases}
\author{P. Acampora, J. Lamboley}
\date{}
\begin{document}
\reversemarginpar
\maketitle

\begin{abstract}
In this paper, we focus on the famous Talenti's symmetrization inequality, more precisely its $L^p$ corollary asserting that the $L^p$-norm of the solution to $-\Delta v=f^\sharp$ is higher than the $L^p$-norm of the solution to $-\Delta u=f$ (we are considering Dirichlet boundary conditions, and $f^\sharp$ denotes the Schwarz symmetrization of $f:\Omega\to\R_+$). We focus on the particular case where functions $f$ are defined on the unit ball, and are characteristic functions of a subset of this unit ball. We show in this case that stability occurs for the $L^p$-Talenti inequality with the sharp exponent 2.
\end{abstract}

\section{Introduction}\label{sect:intro}

In this paper, we will investigate some stability versions of Talenti's inequality in particular cases. Given $\Om\subset \R^n$ a bounded open set and $f\in L^2(\Om)$, we will denote by $u_f$ the unique weak solution to
\begin{equation}\label{eq:elliptic}
\left\{\begin{array}{ccccc}
-\Delta u_f&=&f&\textrm{ in }&\Om,\\
u_f&=&0&\textrm{ on }&\partial\Om.\end{array}\right.
\end{equation}
The aim of Talenti's inequality is to compare $u_f$ and $u_{f^\sharp}$ where $f^\sharp:\Om^\sharp\to\R$ is the Schwarz symmetrization of $f$, defined on $\Om^\sharp$ the centered ball of same volume as $\Om$ (see Definition \ref{defi: schwarz}): more precisely Talenti's inequality (Theorem \ref{thm: talenti}) states that if $f\geq 0$, then
$$\forall x\in \Om^\sharp, \;\;u_f^\sharp(x)\leq v(x),$$
where $v=u_{f^\sharp}$ solves
\begin{equation}\label{eq:ellipticf*}
\left\{\begin{array}{ccccc}
-\Delta v&=&f^\sharp&\textrm{ in }&\Om^\sharp\\
v&=&0&\textrm{ on }&\partial\Om^\sharp.\end{array}\right.
\end{equation}
and $u_f^\sharp$ is the Schwarz symmetrization of $u_f$. This \red{pointwise} comparison implies in particular the following: for every $p\in[1,+\infty]$,
\begin{equation}\label{eq:Lptalenti}
\|u_f\|_{L^p(\Om)}\leq \|v\|_{L^p(\Om^\sharp)}.
\end{equation}
Moreover, \red{equality is realized in \eqref{eq:Lptalenti} for some $p\in[1,+\infty]$ only if $\Om$ is a ball and $f=f^\sharp$ up to translations:
this was proved in \cite[Theorem 1 and Corollary 1]{ALT86} (we recall this in Theorem \ref{thm: talentirigid} below, the reader can also look at \cite{K06})}.\\

In this work we are interested in quantitative versions of \eqref{eq:Lptalenti}. Notice that there are two parameters that are symmetrized in these inequalities: $\Om$ and $f$. Therefore the stability inequalities we may be seeking for should take into account both the distance from $f$ to $f^\sharp$ and from $\Om$ to $\Om^\sharp$: to evaluate the asymmetry of $\Om$ for example, we denote
$$\alpha(\Om)=\min_{x_0\in\R^n}\left\{\frac{|\Om\Delta B_r(x_0)|}{|B_r|}, \;|B_r|=|\Om|\right\}$$
the Fraenkel asymmetry of $\Om$ ($B_r(x)$ denotes the ball of radius $r>0$ and centered at $x\in\R^n$; we also denote $B_r=B_r(0)$ the centered ball of radius $r$).

As far as we know, there are only two partial results in this direction:
\begin{enumerate}
\item in \cite{K21}, the author focuses on the case $f\equiv 1$ and shows that for every $p\in[1,+\infty]$, there exists $c=c(n,p)$ such that for every $\Om$ bounded open set in $\R^n$, 
\[
\begin{split}
	&\|v\|^p_{L^p(\Om^\sharp)}-\|u_1\|^p_{L^p(\Om)}\geq c\alpha(\Om)^{2+p}\qquad \textrm{ if }p\in[1,+\infty),\\ &\|v\|_{L^\infty(\Om^\sharp)}-\|u_1\|_{L^\infty(\Om)}\geq c\alpha(\Om)^{3}.
\end{split}
\]
\item in \red{\cite[Theorem 1.1]{ABMP24}} the authors \red{solve} the case $p=\infty$: it is shown that for any $\Om$ bounded open set of $\R^n$ and any nonnegative $f\in L^2(\Om)$, there exists $c=c(n,|\Om|,f^\sharp)$ and $\theta=\theta(n)$ such that
\begin{equation}\label{eq:stabinfty}\|v-u^\sharp\|_{L^\infty(\Om^\sharp)}\ge c\left(\alpha(\Om)^3+\inf_{x_0\in\R^n}\Big\|f-f^\sharp(\cdot+x_0)\Big\|_{L^1(\R^n)}^\theta\right).
\end{equation}
Note that the proof of Talenti's inequality shows that $v-u^\sh$ is radially decreasing, so that $\norm{v-u^\sh}_{L^\infty(\Om^\sharp)}=\norm{v}_{L^\infty(\Om^\sharp)}-\norm{u}_{L^\infty(\Om)}$.

In the same paper \red{(see \cite[Section 7]{ABMP24})}, the authors also obtain a result in the case \red{$p<\infty$}, namely:
\red{$$\|v\|_{L^p(\Om^\sharp)}^p-\|u\|_{L^p(\Om)}^p\ge c\alpha(\Om)^{2+p},$$}
\red{but this result is only partial as there is no control of the asymmetry of $f$.}
\end{enumerate}

In both of these results, the obtained exponents are not expected to be sharp (also in \red{\eqref{eq:stabinfty}}, $\theta$ does not seem to be explicit).

The goal of this paper is to provide a first quantitative result for \eqref{eq:Lptalenti} with a sharp exponent. In order to start this investigation with only one parameter (similarly to \cite{K21} where the author assumed $f\equiv 1$), we will assume that $\Om=B_1$ is a centered ball of radius 1, so  that $\Om=\Om^\sharp$ and the only parameter is $f$. Moreover, we will assume $f=\chi_E$ to be the characteristic function of a set $E\subset B_1$: in this case we denote $u_E$ for $u_{\chi_E}$, and $f^\sharp=\chi_{B_*}$ where $B_*$ is the centered ball of same volume as $E$. This will allow us to use a geometric approach and the framework of shape derivatives. In this setting, we obtain the following stability result:
\begin{thm}
    \label{thm: pnorm}
    Let $n\ge 1$, $p\in [1,+\infty]$, and $m\in(0,|B_1|)$. Then there exists $c=c(p,m,n)>0$ such that for every measurable set $E\subset B_1$ with $\abs{E}=m$ we have
    \begin{equation}\label{eq: stabpnorm}
        \norm{u_{B_*}}_{L^p(B_1)}-\norm{u_E}_{L^p(B_1)}\ge {c}\abs{E\De B_*}^2,
    \end{equation}
where $B_*$ is the centered ball of volume $m$.\end{thm}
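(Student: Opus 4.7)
The plan is to follow a Fuglede-type strategy: first establish the sharp quadratic estimate for sets $E$ that are a $C^{2,\alpha}$-small perturbation of $\partial B_*$ by a second-order shape analysis of $J(E) := \|u_E\|_{L^p(B_1)}^p$ at $E=B_*$, then reduce the general case to this nearly spherical setting via a selection principle. Since the mean value theorem yields $\|u_{B_*}\|_{L^p}-\|u_E\|_{L^p}\ge c(p)\,\bigl(J(B_*)-J(E)\bigr)/\|u_{B_*}\|_{L^p}^{p-1}$, it is enough to prove $J(B_*)-J(E) \ge c|E\Delta B_*|^2$ for finite $p$; the case $p=\infty$ can be handled separately through the pointwise form of Talenti's inequality.

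Arguing by contradiction, suppose there exists a sequence $E_k\subset B_1$ with $|E_k|=m$, $|E_k\Delta B_*|\to 0$, and $J(B_*)-J(E_k) = o(|E_k\Delta B_*|^2)$. By minimizing an auxiliary penalized functional of the form $E\mapsto -J(E)+\Lambda\bigl||E\Delta B_*|-\varepsilon_k\bigr|$ with $\varepsilon_k=|E_k\Delta B_*|$ and $\Lambda$ large, one replaces $E_k$ by quasi-minimizers; the nonlocal term is smooth thanks to the Green's function representation of $u_E$, so the standard $\omega$-minimizer regularity theory applies and yields uniform $C^{2,\alpha}$ estimates. We may thus assume $\partial E_k$ is the graph of a function $\rho_k$ over $\partial B_*$ with $\|\rho_k\|_{C^{2,\alpha}(\partial B_*)}\to 0$.

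For the shape analysis, set $u=u_{B_*}$ and introduce the adjoint state $w$ solving $-\Delta w = u^{p-1}$ in $B_1$, $w=0$ on $\partial B_1$ (both radial and explicit). For a volume-preserving deformation $\Phi_t = \id+tX$ with normal velocity $V=X\cdot\nu$ on $\partial B_*$, the state derivative $\dot u$ solves (distributionally) a Poisson problem on $B_1$ with zero boundary data and source concentrated on $\partial B_*$ with density $V$; integration by parts then gives $J'(0) = p\int_{\partial B_*} wV\,d\Hn = pw_*\int_{\partial B_*} V\,d\Hn = 0$, where $w_*$ is the constant trace of $w$ on $\partial B_*$. The second derivative $J''(0)$ is an explicit quadratic form in $V$, which diagonalizes under the spherical harmonic decomposition $V=\sum_{\ell,m}a_{\ell,m}Y_{\ell,m}$ on $L^2(\partial B_*)$: the constant mode ($\ell=0$) is killed by volume conservation, the first harmonics ($\ell=1$) correspond to infinitesimal translations of $B_*$ and are eliminated by re-centering, and for each $\ell\ge 2$ the eigenvalue $\lambda_\ell$ arises from a one-dimensional radial ODE. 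The core analytical task is to show $-\lambda_\ell\ge c(n,p,m)>0$ uniformly in $\ell$, thereby establishing the coercivity $-J''(0)[V,V]\ge c\|V\|_{L^2(\partial B_*)}^2$ on the orthogonal complement of the neutral directions.

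Plugging this coercivity into the second-order Taylor expansion of $J$ along $\rho_k$ (after re-centering to kill the first harmonics) yields
\[
J(B_*)-J(E_k) \ge \tfrac{c}{2} \|\rho_k\|_{L^2(\partial B_*)}^2 + o(\|\rho_k\|_{L^2}^2) \gtrsim |E_k\Delta B_*|^2,
\]
using $|E_k\Delta B_*|\simeq \|\rho_k\|_{L^1(\partial B_*)}\le C\|\rho_k\|_{L^2}$ for nearly spherical sets, which contradicts the contradiction hypothesis and proves the theorem. The main obstacle is the explicit coercivity computation: the mode-by-mode derivation of $\lambda_\ell$ via separation of variables, and the extraction of a uniform positive lower bound, require a careful ODE analysis that exploits the explicit radial formulas for $u$ and $w$. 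The selection/regularity step is a secondary source of technicality, because the nonlocal character of $J$ means the $\omega$-minimality regularity must be verified by hand for this particular functional.
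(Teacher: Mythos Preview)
Your overall architecture (Fuglede-type coercivity plus selection principle) is indeed what the paper does, but two of your key steps are genuinely problematic as written.

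\textbf{The selection/regularity step does not work as stated.} You propose to gain $C^{2,\alpha}$ regularity for quasi-minimizers of $E\mapsto -J(E)+\Lambda\bigl||E\Delta B_*|-\varepsilon_k\bigr|$ by invoking ``standard $\omega$-minimizer regularity theory''. That theory is built for perimeter-type functionals: one needs a leading term $P(E)$ (or an elliptic integrand on $\partial E$) against which the lower-order perturbation is compared. Here there is no perimeter at all; $J(E)=\int_{B_1}j(u_E)$ is a purely bulk, nonlocal functional of $\chi_E$, so the De Giorgi--Almgren regularity machinery has nothing to bite on and a minimizer of your penalized problem has no a priori reason to have a smooth boundary. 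The paper circumvents this by a different mechanism: it takes $E_\delta$ a maximizer of $J$ under the constraints $|E|=m$, $|E\Delta B_*|=\delta$, introduces the adjoint state $w_\delta$ solving $-\Delta w_\delta=j'(u_{E_\delta})$, and replaces $E_\delta$ by a \emph{superlevel set} $\widetilde E_\delta=\{w_\delta>t_\delta\}$ of volume $m$. Level sets of $w_\delta$ are smooth because $w_\delta\to w_{B_*}$ in $W^{2,q}$ with $|\nabla w_{B_*}|>0$ on $\partial B_*$. The gap $J(\widetilde E_\delta)-J(E_\delta)$ is then controlled by a quantitative bathtub principle applied to $w_\delta$, which furnishes the missing $c|E_\delta\Delta\widetilde E_\delta|^2$ without any geometric regularity on $E_\delta$ itself.

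\textbf{The treatment of the $\ell=1$ modes is incorrect.} You assert that the first spherical harmonics ``correspond to infinitesimal translations of $B_*$ and are eliminated by re-centering''. But the problem is \emph{not} translation invariant: $u_E$ satisfies Dirichlet conditions on the fixed ball $\partial B_1$, so $u_{E+a}\neq u_E(\cdot-a)$ and translating $E$ genuinely changes $J(E)$. There is no translation symmetry to quotient out, and you are not allowed to re-center. The paper instead proves directly that the $\ell=1$ eigenvalue is strictly coercive: after diagonalizing the second variation via the operator $T$ (sending $g\in L^2(\partial B_*)$ to the trace of the corresponding linearized adjoint state), one shows by an ODE comparison argument that the eigenvalues $1/\lambda_k$ are decreasing in $k$ and that already $1/\lambda_1<|\partial_\nu w_0|$ on $\partial B_*$. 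This last inequality comes from an explicit computation comparing two radial integrals of $j''(u_0)$; it is the genuine analytical core and does not follow from symmetry considerations.

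Finally, your one-line dismissal of $p=\infty$ (``handled separately through the pointwise form of Talenti's inequality'') is far too optimistic; the paper devotes an entire section to this case, redoing the shape-derivative and coercivity analysis with the Green function $G(x_E,\cdot)$ playing the role of the adjoint state and requiring in addition the differentiability of the maximum point $x_E$.
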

\red{\begin{rem}
Note that the constant $c$ in \eqref{eq: stabpnorm} is explicit only when $p=1$ where we have $c=m/16$ if $n=1$ and $c=(4n^2\omega_n)^{-1}$ if $n\geq 2$ (it is therefore uniform in $m$ if $n\ge2$), see the proof of Proposition \ref{prop: p=1}. But in the case $p>1$, our approach argues by contradiction and classically this cannot provide an explicit estimate of the constant.
\end{rem}}
\noindent Moreover, we show in Propositions \ref{prop: p=1}, \ref{prop: sharp} and \ref{prop: p=infty} that the exponent 2 obtained in Theorem \ref{thm: pnorm} is sharp. In a forthcoming \red{work we} will generalize the strategy introduced in the present paper to investigate the more general case where $f$ is not assumed to be a characteristic function.\\

The proof of \autoref{thm: pnorm} will be divided in three cases depending on the value of $p$. In the special case $p=1$, the proof is quite straightforward and relies only on Talenti's inequality: we provide the proof in Section \ref{ssect: p=1}, which is based on the study of the following shape optimization problem:
$$
\max\Set{ \norm{u_E}_1 | \begin{gathered}
	|E|=m,\\|E\Delta B_*|=\delta
\end{gathered}}
$$
for $m\in(0,|B_1|)$ and $\delta>0$, which can be seen as looking for the worst set of given asymmetry, with regard of the Talenti deficit $\|u_{B_*}\|_1-\|u_E\|_1$. We identify explicitely the solution of this problem as a  target-shaped set (it is the union of a ball and an annulus, see \Cref{defi: annulus}).\\

In the case $p>1$, we were not able to solve explicitly  $\max\{\|u_E\|_p, |E|=m, |E\Delta B_*|=\delta\}$, we therefore follow a strategy developed in \cite{M21,MRB22,CMP23,CMP23supp} to prove \eqref{eq: stabpnorm}. In fact the case $1<p<\infty$ falls into the following framework: given a function $j:\R_+\to\R$, we study the stability for the following shape optimization problem:
\begin{equation}
\max\Set{\J(E)| \abs{E}=m},\qquad \qquad \J(E):=\int_{B_1} j(u_E(x))dx 
\end{equation}
which is solved by the centered ball $B_*$ if $j$ is non-decreasing.
Classically, under convexity assumption on $j$, such problem can be relaxed in the class
\[
\M_m:=\Set{V\in L^\infty(B_1) | \begin{gathered}
	0\leq V \leq 1, \\ \int_{B_1} V=m
\end{gathered}}
\]
where $m\in(0,|B_1|)$, and we define $\J(V):=\int_{B_1}j(u_V)$  for $V\in L^\infty(B_1)$ non-negative. More precisely we prove:
\begin{thm}
    \label{thm: maintheorem}
    Let $n\ge 1$, $m\in(0,\abs{B_1})$, and  $j\in C^1(\R_+)\cap C^2((0,+\infty))$ be   such that $j'(0)\ge 0$ and $j''(s)>0$ for every $s>0$, and such that
    \begin{equation}
    \label{eq: j''summable}
       \red{\lim_{s\to0^+}s^\al j''(s)\in (0,+\infty)}
    \end{equation}
    \red{for some $\al\in(-\infty,1)$}. Then there exists a positive constant $c=c(j,m,n)$ such that for every $V\in\M_m$ we haveB
    \[
        \J(B_*)-\J(V)\ge c\norm{V-\chi_{B_*}}_1^2.
    \]
    where $B_*$ is the centered ball of volume $m$.
\end{thm}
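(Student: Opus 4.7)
Set $w := V - \chi_{B_*}$ and $z := u_V - u_{B_*}$, so that $-\Delta z = w$ with $z|_{\partial B_1} = 0$, $\int_{B_1} w = 0$, $|w| \le 1$, and the constraints on $V \in \M_m$ yield $w \le 0$ on $B_*$, $w \ge 0$ on $B_1 \setminus B_*$. Along the segment $V_t := \chi_{B_*} + tw \in \M_m$, $t \in [0,1]$, the function $\phi(t) := \J(V_t)$ satisfies $\phi''(t) = \int_{B_1} j''(u_{V_t}) z^2\,dx \ge 0$ (using $j'' > 0$) and is therefore convex; Taylor's formula with integral remainder yields
\[
\J(B_*) - \J(V) \;=\; -\phi'(0) \,-\, \int_0^1 (1-t)\phi''(t)\,dt,
\]
with $-\phi'(0) \ge 0$ and $\int_0^1 (1-t)\phi''(t)\,dt \ge 0$. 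The goal is to show that their difference is at least $c\|w\|_{L^1}^2$.

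The core step is a sharp quadratic lower bound on $-\phi'(0)$. Let $p_0$ solve $-\Delta p_0 = j'(u_{B_*})$ in $B_1$ with $p_0|_{\partial B_1} = 0$; then $p_0$ is radial and strictly radially decreasing (by maximum principle and $j'(u_{B_*}) \ge 0$). Setting $c := p_0(r_*)$, the function $p_0 - c$ is positive on $B_*$, negative on $B_1 \setminus B_*$, and by Hopf's lemma $|p_0(x) - c| \ge \mu\,\dist(x, \partial B_*)$ in a neighbourhood of $\partial B_*$, with $\mu > 0$. Green's identity combined with $\int w = 0$ and the sign structure of $p_0 - c$ and $w$ gives $-\phi'(0) = \int_{B_1} |p_0 - c|\,|w|\,dx$. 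A bathtub-principle argument, using the mass balance $\int_{B_*} |w| = \int_{B_1 \setminus B_*} |w| = \|w\|_{L^1}/2$ together with $|w| \le 1$ (so that distributions of $|w|$ minimising the weighted integral concentrate in thin shells of width $\sim \|w\|_{L^1}$ adjacent to $\partial B_*$), delivers the coercivity
\[
-\phi'(0) \;\ge\; c_1 \|w\|_{L^1}^2.
\]

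The second-order remainder is then controlled using \eqref{eq: j''summable}: the bounds $u_{V_t} \gtrsim \dist(\cdot, \partial B_1)$ uniformly in $t$ (Hopf) and $j''(u_{V_t}) \lesssim u_{V_t}^{-\alpha}$ with $\alpha < 1$ imply $j''(u_{V_t}) \in L^1(B_1)$ uniformly in $t$. Combined with elliptic estimates on $z$ (notably $\|z\|_{L^\infty} \lesssim \|w\|_{L^p}$ for $p > n/2$ by Calder\'on--Zygmund, and $\|z\|_{L^2}^2 \lesssim \|w\|_{L^1}$ by duality with $\|w\|_{L^\infty} \le 1$) and a splitting of $B_1$ into a bulk region (where $j''(u_{V_t})$ is bounded) and a thin boundary strip (where $z$ vanishes linearly by Hopf), the remainder is shown to be dominated by $\tfrac{1}{2}(-\phi'(0))$ whenever $\|w\|_{L^1} \le \delta_0$ for some threshold $\delta_0 = \delta_0(j,m,n) > 0$, yielding the inequality in the local regime with constant $c_1/2$. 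For $\|w\|_{L^1} \ge \delta_0$ we argue by compactness: for a minimising sequence $V_k \in \M_m$, up to subsequence $V_k \to V^*$ weakly-$*$ in $L^\infty$ with $V^* \in \M_m$; the constraint $V_k \le 1$ on $B_*$, together with weak convergence forcing $\int_{B_*} V_k \to |B_*|$ when $V^* = \chi_{B_*}$, upgrades weak to strong $L^1$-convergence $V_k \to \chi_{B_*}$ on $B_*$ (and symmetrically on $B_1 \setminus B_*$), contradicting $\|V_k - \chi_{B_*}\|_{L^1} \ge \delta_0$; hence $V^* \neq \chi_{B_*}$, and $\J(V_k) \to \J(V^*) < \J(B_*)$ via Rellich-based continuity of $\J$ and the Talenti rigidity of Theorem~\ref{thm: talentirigid}, giving a uniform positive lower bound on the deficit in this regime.

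The main obstacle is the second-order remainder estimate: the naive Calder\'on--Zygmund bound $\|z\|_{L^\infty} \lesssim \|w\|_{L^1}^{2/n}$ is only borderline in dimension $2$ and insufficient for $n \ge 3$. Closing the gap requires a refined analysis that combines \eqref{eq: j''summable} (integrability of $j''(u_{V_t})$ inherited from $\alpha < 1$) with the sharp behaviour of $z$ near $\partial B_1$ (coming from $w$ having both signs and zero mean), so as to make the remainder of strictly lower order than the first-order coercivity $c_1 \|w\|_{L^1}^2$.
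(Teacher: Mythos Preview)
Your global compactness argument is essentially the paper's \autoref{prop: smalldelta}, and your identification of $-\phi'(0)=\int|p_0-c|\,|w|\ge c_1\|w\|_{L^1}^2$ is the quantitative bathtub principle (\autoref{eq: quantbathtub}); both are fine. The genuine gap is the local step: you claim the second-order remainder $\int_0^1(1-t)\int j''(u_{V_t})z^2\,dt$ can be absorbed into $-\phi'(0)$, and in the last paragraph you even assert it should be of ``strictly lower order'' than $c_1\|w\|_{L^1}^2$. This is false. Take $V=\chi_E$ with $E=\{|x|<r_*+hg(\theta)\}$, $\int_{\partial B_*}g=0$: then to leading order in $h$,
\[
-\phi'(0)\sim \tfrac{h^2}{2}\,\bigl|\partial_\nu p_0(r_*)\bigr|\,\|g\|_{L^2(\partial B_*)}^2,
\qquad
\int_0^1(1-t)\phi''(t)\,dt\sim \tfrac{h^2}{2}\int_{B_1}j''(u_{B_*})\,(u')^2,
\]
where $-\Delta u'=g\,d\Hn\!\!\restriction_{\partial B_*}$. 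Both terms are exactly of order $h^2\sim\|w\|_{L^1}^2$, and the deficit is their \emph{difference}. Showing this difference is $\ge c\|g\|_{L^2}^2$ is precisely the coercivity statement $l_2(g,g)\le -c\|g\|_{L^2}^2$ of \autoref{prop: eigentostab}, whose proof (\autoref{prop: computeigenv}) rests on the eigenvalue inequality $\lambda_1>\bigl(|\partial_\nu w_0|_{\partial B_*}\bigr)^{-1}$ obtained by an explicit ODE/spherical-harmonic computation. Nothing softer will do: the ratio of the two terms is governed by $1/(\lambda_1|\partial_\nu w_0|)$, a constant in $(0,1)$ that depends on $j,m,n$ and is not made small by shrinking $\|w\|_{L^1}$.

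Your diagnosis of the obstacle is also misdirected. The integrability of $j''(u_{V_t})$ near $\partial B_1$ (from \eqref{eq: j''summable}) and the vanishing of $z$ there only ensure the second-order integral is \emph{finite}; they say nothing about its size relative to $-\phi'(0)$. The competition happens near $\partial B_*$, not $\partial B_1$, and is resolved by the spectral analysis you have not attempted. The paper circumvents the need to compare these two terms for arbitrary $w$ by first replacing $E_\delta$ with a nearby smooth graph $\widetilde E_\delta$ over $\partial B_*$ (via the bathtub principle applied to the \emph{perturbed} adjoint state $w_\delta$, not $p_0$), and then invoking the Fuglede-type \autoref{thm: fuglede}, which is exactly where the coercivity computation lives.
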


\begin{rem}\label{rem: holder}
\red{In this result, we introduce the suitable assumptions on $j$ in order to include the case $j(s)=|s|^p$ for $p\in(1,\infty)$. Those hypotheses may be devided in two categories:
\begin{enumerate}
\item that $j$ is strictly increasing and strictly convex, which is used in most stages of the proof, see for example Proposition \ref{prop: exist}.
\item the technical assumption \eqref{eq: j''summable}  about the behavior of $j$ near the origin, that will allow the computation that we need in the technical part of the proof, see for example Proposition \ref{prop: w'}. Note that this assumption implies that $j'$ is locally $\be:=(1-\al)$-H\"older continuous. This follows easily from \red{observing that $j''(s)\le Cs^{-\al}$ and} the equality $j'(y)-j'(x)=\int_x^y j''(t)dt$.
\end{enumerate}
}
\end{rem}

\begin{proof}[Proof of \Cref{thm: pnorm} from \Cref{thm: maintheorem} when $p\in(1,\infty)$:]
    Let $p\in(1,\infty)$: we consider $j:s\in[0,\infty)\mapsto s^p$, which is $C^2$ in $(0,\infty)$ and $C^1$ up to 0, with $j'(0)=0$ and $j''>0$ on $(0,\infty)$, and satisfies \eqref{eq: j''summable}.

   Therefore, \Cref{thm: maintheorem} applies, and there exists $c=c(p,m,n)$ such that
    \[
        \norm{u_{B_*}}_p^p\ml(1-\ml(\fr{\norm{u_E}_p}{\norm{u_{B_*}}_p}\mr)^p\mr)\ge c \abs{E\De B_*}^2.
    \]
    Then the result follows by using the inequality
    \[
        1-x^p\le p(1-x) \qquad \qquad \forall x\ge 0.
    \]
\end{proof}

Finally, we give a seperate proof of \autoref{thm: pnorm} when $p=\infty$, see below for more details.

\subsection*{Outline of the paper} 
In the following section, we recall basic definitions on Schwarz symmetrization, and then produce a proof of \Cref{thm: pnorm} in the particular case $p=1$. We then provide useful results to prepare the proof of \autoref{thm: maintheorem}: in particular we prove that  \red{if $j$ is strictly increasing and stricly convex, then}
\[
\J(B_*)=\max\Set{\J(V):=\int_{B_1} j(u_V(x))dx| V\in \M_m},
\]
that $B_*$ is the unique maximizer, and that for any small $\delta>0$ there exists $E_\delta$ such that $|E_\delta|=m$, $|E_\delta\Delta B_* |=\delta$ and 
\[ 
\J(E_\delta)=\max\Set{\J(V)| \begin{gathered}
	V\in \M_m, \\
    \norm{V-\chi_{B_*}}_1=\delta 
\end{gathered}}.
\]
We also show that the exponent 2 in the stability result of \Cref{thm: pnorm} and \Cref{thm: maintheorem} is sharp.

We then focus on the proof of \Cref{thm: maintheorem}, which is split in the next two sections: 
in \Cref{fugledeImpliesStability}, we show that the proof of \Cref{thm: maintheorem} reduces to a Fuglede type of result (\Cref{thm: fuglede}) asserting that stability occurs for smooth deformations of $B_*$. This strategy falls into the strategy of ``selection principle'' first introduced in the setting of shape optimization by Cicalese and Leonardi \cite{CL12} to prove the quantitative isoperimetric inequality: the main tool here is the quantitative baththub principle proved in \cite{MRB22}.
\Cref{sect:fuglede}, the most technical part of the paper, is devoted to the proof of \Cref{thm: fuglede}. It requires a computation of first and second order shape derivative of the energy (\Cref{ssect:shapederivatives}), a proof of the coercivity of the second order optimality condition (\Cref{ssect:coercivity}), and a continuity property of the second order derivative to control the Taylor expansion (\Cref{ssect:continuity}).

In \Cref{sect:Linf} we provide the proof of \Cref{thm: pnorm} for $p=+\infty$. The proof uses a representation of the solution with the Green function of the ball, and relies on adapting the shape derivative approach of \Cref{fugledeImpliesStability} and \Cref{sect:fuglede} to the $L^\infty$ norm. This requires to compute a second order shape derivative of such norm, which up to our knowledge, is new (see \cite{HenLucPhil2018} for a computation of a first order shape derivative in a similar context).

Finally, we detailed in the appendix some results \red{needed in \Cref{fugledeImpliesStability}} about the way convergence of functions may imply the convergence of level sets. \red{The proofs are straightforward, but we have decided to include the computations to complement the results as stated in~\cite{M21,MRB22} (see the appendix for more details).}

\noindent{\bf Acknowledgement: }the authors would like to warmly thank Idriss Mazari, for several fruitful discussions about this paper, which in particular lead to a simplification of the proof of \autoref{thm: maintheorem} in \autoref{ssect: mainproof}. This work was also partially supported by the project ANR STOIQUES financed 
by the French Agence Nationale de la Recherche (ANR). This work was partially supported by Gruppo Nazionale per l’Analisi Matematica, la Probabilità e le loro Applicazioni
(GNAMPA) of Istituto Nazionale di Alta Matematica (INdAM). 
The author Paolo Acampora was partially supported by the 2025 project GNAMPA 2024: "Analisi di Problemi Inversi nelle Equazioni alle Derivate Parziali", CUP\_E5324001950001 .

\section{Tools and first results}
\subsection{Symmetrization and Talenti's inequality
}
  We refer the reader to \cite{T94} or \cite{K06} for an overview about definitions and properties of rearrangements.
        \begin{defi}\label{def: mu}
	       Let $\Omega\subset\R^n$ be a bounded open set, and let $u: \Omega \to \R$ be a measurable function. We define the \emph{distribution function} $\mu_u : [0,+\infty)\, \to [0, +\infty)$ of $u$ as the function
	       $$
	           \mu_u(t)= \abs{\Set{x \in \Omega \, ,\,  \abs{u(x)} > t}}.
	       $$
        \end{defi}
        \begin{defi}\label{AAC:defi:incre}
        	Let $u: \Omega \to \R$ be a measurable function. We define the \emph{decreasing rearrangement} $u^*$ of $u$ as 
         \[
            u^*(s) = \inf\Set{t>0 \, ,\; \mu_u(t) \leq s}.
         \] 
        
        \end{defi}
        \begin{defi}\label{defi: schwarz}
            Let $u: \Omega \to \R$ be a measurable function. We define the \emph{Schwarz rearrangement} $u^\sharp$ of $u$ as
            \[
            u^\sharp (x)= u^*(\omega_n \abs{x}^n) \qquad \qquad x\in \Om^\sharp,
            \]
            where $\Om^\sh$ denotes the centered ball having the same volume as $\Om$.
        \end{defi}
        The next result is the famous symmetrization result obtained by G. Talenti, see \cite[Theorem I]{T76}.
        \begin{thm}[Talenti's comparison]
        \label{thm: talenti}
        Let $\Omega\subset\R^n$ be a bounded open set, let $f\in L^{2}(\Om)$
        and let $v_f\in {H^1_0}(\Om)$ and $v_{f^\sh}\in {H^1_0}(\Om^\sh)$ be the unique solutions to
        \[
            \left\{\begin{array}{cccc}
            -\De u_f&=&f  & \text{ in }\Om, \\
            u_f&=&0 &\text{ on }\pa \Om,          
            \end{array}\right. \qquad \qquad
            \left\{\begin{array}{cccc}
             -\De u_{f^\sh}&=&f^\sh &\text{ in }\Om^\sh, \\
            u_{f^\sh}&=&0 &\text{ on }\pa \Om^\sh.           
            \end{array}\right.
        \]
        Then
        \[
            u_f^\sh\le u_{f^\sh}.
        \]
        In particular, for every set $E$, letting $u_E=u_{\chi_E}$ and $u_{E^\sh}=u_{\chi_{E^\sh}}$, we have
        \[
            u_E^\sh\le u_{E^\sh}.
        \]
        \end{thm}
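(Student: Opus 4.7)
The plan is to derive an ordinary differential inequality for the decreasing rearrangement $u_f^*$ and compare it with the corresponding identity (with equality) satisfied by $u_{f^\sh}^*$. Without loss of generality one may assume $f\ge 0$, so that $u_f\ge 0$ by the maximum principle.

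First, for almost every $t>0$, integrating $-\Delta u_f=f$ over the superlevel set $\{u_f>t\}$ (equivalently, using $(u_f-t)^+$ as a test function and differentiating in $t$) yields
\[
\int_{\{u_f>t\}} f\,dx = \int_{\{u_f=t\}}|\nabla u_f|\,d\mathcal{H}^{n-1}.
\]
The Hardy-Littlewood inequality bounds the left-hand side by $\int_0^{\mu_u(t)}f^*(\sigma)\,d\sigma$. For the right-hand side, Cauchy-Schwarz combined with the coarea formula (which gives $-\mu_u'(t)=\int_{\{u_f=t\}}|\nabla u_f|^{-1}\,d\mathcal{H}^{n-1}$ a.e.) produces
\[
\mathrm{Per}(\{u_f>t\})^2 \le \Bigl(\int_{\{u_f=t\}}|\nabla u_f|\,d\mathcal{H}^{n-1}\Bigr)\bigl(-\mu_u'(t)\bigr),
\]
and the isoperimetric inequality $\mathrm{Per}(\{u_f>t\})\ge n\omega_n^{1/n}\mu_u(t)^{(n-1)/n}$ then gives, after the change of variable $s=\mu_u(t)$,
\[
-\bigl(u_f^*\bigr)'(s) \le \frac{1}{n^2\omega_n^{2/n}\,s^{2(n-1)/n}}\int_0^s f^*(\sigma)\,d\sigma.
\]

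The crucial observation is that every inequality above becomes an equality when the procedure is run on the symmetrized problem $-\Delta u_{f^\sh}=f^\sh$ on $\Omega^\sh$: the superlevel sets of the radial, radially decreasing $u_{f^\sh}$ are concentric balls (saturating the isoperimetric inequality), $|\nabla u_{f^\sh}|$ is constant on each such sphere (saturating Cauchy-Schwarz), and $f^\sh$ is already rearranged (saturating Hardy-Littlewood). Hence $u_{f^\sh}^*$ satisfies the equality version of the displayed relation. Integrating both from $s$ to $|\Omega|=|\Omega^\sh|$ and using the vanishing boundary conditions $u_f^*(|\Omega|)=0=u_{f^\sh}^*(|\Omega^\sh|)$ yields $u_f^*(s)\le u_{f^\sh}^*(s)$ for every $s\in(0,|\Omega|]$, which is precisely the pointwise comparison $u_f^\sh \le u_{f^\sh}$; the corollary for $f=\chi_E$ is then immediate from $(\chi_E)^\sh=\chi_{E^\sh}$.

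The main technical obstacle is to handle rigorously the measure-zero set of values $t$ where $\mu_u$ is not differentiable, or where $|\nabla u_f|$ vanishes on a subset of $\{u_f=t\}$ of positive $\mathcal{H}^{n-1}$ measure (``flat parts'' of $u_f$), since at such $t$ the coarea-based identity for $-\mu_u'(t)$ and the chain rule relating $\mu_u'$ and $(u_f^*)'$ both require care. Standard arguments (see for instance \cite{T76,K06}) show that such exceptional $t$ form a negligible set and may be discarded without affecting the integrated inequality.
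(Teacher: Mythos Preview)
The paper does not give its own proof of this statement; it merely records it as a classical result and cites Talenti's original paper \cite[Theorem I]{T76}. Your sketch is precisely the classical argument from that reference (integration over superlevel sets, Hardy--Littlewood, coarea plus Cauchy--Schwarz, isoperimetric inequality, comparison with the radial problem where every step is an equality), so there is nothing substantive to compare.

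One minor remark: the reduction ``without loss of generality $f\ge 0$'' is not entirely automatic when $f$ changes sign, since then $u_f$ may change sign and one has to relate $u_f^\sh$ (which by the paper's Definition~\ref{def: mu} rearranges $|u_f|$) to the solution with datum $|f|$. In the context of this paper, however, $f\ge 0$ is always assumed (see the introduction), so the point is moot.
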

        The following result solves the case of equality in Talenti's inequality and was first proved in \cite{ALT86}.
        \begin{thm}
        \label{thm: talentirigid}
            Let $\Om\subset\R^n$ be a bounded and open set\red{, let $p\in[1,+\infty]$, and let $f\in L^2(\Om)$ be a non-negative function such that $f\not\equiv 0$}. If 
            \red{
            \[
            \norm{u_f}_p=\norm{u_{f^\sh}}_p,
            \]
            }
            then there exists $x_0\in\R^n$ such that up to negligible sets $\Om=x_0+\Om^\sh$, $f(\cdot)=f^\sh(\cdot+x_0)$, and $u_f(\cdot)=u_{f^\sh}(\cdot+x_0)$ almost everywhere.
        \end{thm}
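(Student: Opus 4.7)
The plan is to retrace the proof of Talenti's inequality and then read off rigidity from the equality cases in the chain of inequalities that it relies on. For $u=u_f$, the standard chain combines the coarea formula for $-\mu_u'(t)$, the integration of $-\De u=f$ over the super-level sets $\{u>t\}$, the Cauchy--Schwarz inequality on $\pa^*\{u>t\}$, the Euclidean isoperimetric inequality, and the Hardy--Littlewood inequality. Together they yield the pointwise differential inequality
\[
n^2\omega_n^{2/n}\mu_u(t)^{2-2/n}\le \ml(-\mu_u'(t)\mr)\int_0^{\mu_u(t)}f^*(s)\,ds\qquad\text{for a.e. }t>0,
\]
which, once integrated and compared to the corresponding equality for the radial problem on $\Om^\sh$, yields $u_f^\sh\le u_{f^\sh}$.

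Under the assumption $u_f^\sh=u_{f^\sh}$ a.e., all of these inequalities must be equalities for a.e. $t\in(0,\|u_f\|_\infty)$. From this I would extract three consequences: (i) equality in Cauchy--Schwarz forces $|\nabla u_f|$ to be constant on almost every level set $\{u_f=t\}$; (ii) equality in the isoperimetric inequality forces $\{u_f>t\}$ to coincide up to a null set with some Euclidean ball $B_{r(t)}(x(t))$ for a.e. $t$; (iii) equality in Hardy--Littlewood forces $\{u_f>t\}$ to coincide up to a null set with a super-level set of $f$ of volume $\mu_u(t)$.

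The crux of the argument is to promote (ii) to the stronger statement that the centers $x(t)$ can be chosen independent of $t$. Here I would exploit the nestedness $\{u_f>t_1\}\sps\{u_f>t_2\}$ for $t_1<t_2$: as $t$ approaches $\esssup u_f$, the balls $B_{r(t)}(x(t))$ shrink to a single point, which then serves as a common center $x_0$, and a measure-theoretic argument removes the null-set ambiguity (this is in the spirit of the Brothers--Ziemer rigidity for Pólya--Szeg\H{o}). This is the step I expect to be the main obstacle, since it requires either enough regularity of $u_f$ to define a ``center'' pointwise, or a careful treatment of the equivalence class of super-level sets. Once it is settled, one gets $u_f(\cdot)=u_f^\sh(\cdot-x_0)$ a.e. Because $f\ge 0$ with $f\not\equiv 0$ forces $u_f>0$ on (each connected component of) $\Om$ by the strong maximum principle, and every positive super-level set of $u_f$ is a ball centered at $x_0$, letting $t\to 0^+$ yields $\Om=x_0+\Om^\sh$ up to a null set.

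Finally, (iii) combined with $\{u_f>t\}=B_{r(t)}(x_0)$ for a.e. $t$ shows that the super-level sets of $f$ of mass $\mu_u(t)$ are exactly $B_{r(t)}(x_0)$; letting $t$ range over a dense subset of $(0,\|u_f\|_\infty)$ yields $f(\cdot)=f^\sh(\cdot+x_0)$ a.e. The remaining identity $u_f(\cdot)=u_{f^\sh}(\cdot+x_0)$ is then a consequence of uniqueness of the weak solution to \eqref{eq:elliptic} on the ball $\Om=x_0+\Om^\sh$.
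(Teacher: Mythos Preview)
The paper does not provide its own proof of this statement: Theorem~\ref{thm: talentirigid} is quoted from \cite{ALT86} (see also \cite{K06}) and used as a black box. There is therefore nothing in the paper to compare your argument against directly. That said, your outline is precisely the classical route taken in the cited reference: retrace the Talenti chain (coarea, Cauchy--Schwarz on level sets, isoperimetric, Hardy--Littlewood), and read off rigidity from each equality.

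You have correctly identified the genuine difficulty: upgrading ``$\{u_f>t\}$ is a ball $B_{r(t)}(x(t))$ for a.e.\ $t$'' to ``the center $x(t)$ does not depend on $t$''. Nestedness alone is not enough (two nested balls need not be concentric), and the heuristic of shrinking to a point as $t\uparrow\|u_f\|_\infty$ only pins down a candidate center, not the concentricity of the whole family. In \cite{ALT86} this step is handled via the additional information from the Cauchy--Schwarz equality, namely that $|\nabla u_f|$ is constant on a.e.\ level and in fact equals $|\nabla u_f^\sh|$ at the corresponding radius; this rigidifies the geometry enough to force concentricity. A Brothers--Ziemer type argument works too, but requires first showing $|\{\nabla u_f=0\}\cap\{0<u_f<\|u_f\|_\infty\}|=0$, which again comes from the equality chain. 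One further looseness: your use of Hardy--Littlewood rigidity to recover $f$ is not quite right as stated, since equality $\int_{\{u_f>t\}}f=\int_0^{\mu_u(t)}f^*$ only gives $\{f>f^*(\mu_u(t))\}\subset\{u_f>t\}\subset\{f\ge f^*(\mu_u(t))\}$ up to null sets, which does not immediately yield $f=f^\sh(\cdot-x_0)$ when $f$ has level sets of positive measure. The cleaner way, once $u_f(\cdot)=u_{f^\sh}(\cdot-x_0)$ is established, is simply to read off $f=-\Delta u_f=-\Delta u_{f^\sh}(\cdot-x_0)=f^\sh(\cdot-x_0)$ in the distributional sense.
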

        \red{
        \begin{proof}
            If $p<+\infty$, we have that by Talenti's inequality
            \[
                \norm{u_f}_p^p=\int_{\Om}(u_f^\sh)^p\,dx \le \int_{\Om^\sh}(u_{f^\sh})^p\,dx =\norm{u_{f^\sh}}_p^p.
            \]
            In particular, if we have equality, by strict monotonicity of the function $s\mapsto s^p$, we have that $u_f^\sharp=u_{f^\sharp}$ almost everywhere, and by~\cite[Theorem 1]{ALT86} we have the result. 
            
            For the case $p=+\infty$ we refer to~\cite[Corollary 1]{ALT86}.
        \end{proof}
        }

\subsection[Proof of main Theorem for p=1]{Proof of Theorem \ref{thm: pnorm} for $p=1$}\label{ssect: p=1}

In the case $p=1$, the proof of Theorem \ref{thm: pnorm} is quick and relies only of Talenti's symmetrization result. It also shows the importance of the following annulus.
\begin{defi}\label{defi: annulus}
    Let $m\in(0,\abs{B_1})$. For every $\delta\in(0,\min\{2m,2(|B_1|-m)\})$ we define the \emph{optimal $\delta$-asymmetric radial open set} $A_\delta$ as
    \[
        A_\delta=B_{r_1(\delta)}\cup\Set{x\in\R^n | r_*<\abs{x}< r_2(\delta)},
    \]
    where $r_*$ is the radius of $B_*$ the ball of volume $m$, and $r_1(\delta)$ and $r_2(\delta)$ are chosen in such a way that 
    \[
    \abs{A_\delta\Delta B_*}=\delta.
    \]
    Namely, we have
    \[
        r_1(\delta)=\frac{1}{\abs{B_1}^{1/n}}\mleft(m-\frac{\delta}{2}\mright)^{\frac{1}{n}}
    \qquad\textrm{ and }\qquad
        r_2(\delta)=\fr{1}{\abs{B_1}^{1/n}}\mleft(m+\frac{\delta}{2}\mright)^{\frac{1}{n}}.
    \]
\end{defi} 
The next result shows that among sets $E$ of given volume and such that $|E\Delta B_*|=\delta$, $A_\delta$ is the worst set with regard to the deficit $\|u_{B_*}\|_1-\|u_E\|_1$.

\begin{prop}
\label{prop: f=s}
Let $j:s\in\R_+\mapsto s$, i.e. for every measurable set $E\subset B_1$,
    \[
        \J(E)=\int_{B_1}u_E\,dx.
    \]
Let $m\in(0,|B_1|)$ and {$\delta\in(0,\inf\{2m,2(|B_1|-m)\})$}. Let $E\subset B_1$ be a measurable set such that $|E|=m$ and $|E\Delta B_*|=\delta$ where $B_*$ is the centered ball of volume $m$. Then
    \[
        \J(E)\le \J(A_\de),\;\; i.e.\;\;\;\int_{B_1}u_E\,dx\leq \int_{B_1}u_{A_\delta}\,dx.
    \]
where $A_\delta$ is defined in Definition \ref{defi: annulus}.
\end{prop}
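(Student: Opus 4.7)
The key observation is a duality with the torsion function of the ambient ball. Let $w \in H^1_0(B_1)$ denote the unique solution of
\[
    -\Delta w = 1 \text{ in } B_1, \qquad w = 0 \text{ on } \partial B_1,
\]
so that $w(x) = (1-|x|^2)/(2n)$ is radial and strictly radially decreasing on $B_1$. Testing $-\Delta u_E = \chi_E$ against $w$ and integrating by parts twice yields
\[
    \J(E) = \int_{B_1} u_E \, dx = \int_{B_1} u_E \, (-\Delta w) \, dx = \int_{B_1} (-\Delta u_E) \, w \, dx = \int_E w \, dx.
\]
This converts the maximization of $\J(E)$ under the constraints $|E|=m$ and $|E \Delta B_*|=\delta$ into a linear optimization for the indicator $\chi_E$ against a fixed, explicit, radially decreasing weight $w$.

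To proceed, I would decompose $E$ along $B_*$: set $A := B_* \setminus E$ and $B := E \setminus B_*$. The volume constraint $|E|=|B_*|=m$ forces $|A|=|B|$, while $|E \Delta B_*|=|A|+|B|=\delta$ gives $|A|=|B|=\delta/2$. Then
\[
    \int_E w \, dx = \int_{B_*} w \, dx - \int_A w \, dx + \int_B w \, dx,
\]
so the problem splits into two independent bathtub-principle subproblems: minimize $\int_A w$ over measurable $A \subset B_*$ with $|A|=\delta/2$, and maximize $\int_B w$ over measurable $B \subset B_1 \setminus B_*$ with $|B|=\delta/2$.

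Because $w$ is radially decreasing, its smallest values inside $B_*$ are attained on the annulus adjacent to $\partial B_*$ from within, and its largest values on $B_1 \setminus B_*$ on the annulus adjacent to $\partial B_*$ from without. The bathtub principle thus identifies the optimizers as $\{r_1(\delta) < |x| < r_*\}$ and $\{r_* < |x| < r_2(\delta)\}$, with $r_1$ and $r_2$ as in \Cref{defi: annulus}. Gluing the former to $B_{r_1(\delta)}$ recovers exactly $A_\delta$, and the resulting chain of inequalities gives $\J(E) \le \J(A_\delta)$, which is the claim.

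The proof presents no real obstacle: the explicit form of the torsion function of $B_1$ combined with the bathtub principle carries the whole argument, confirming the paper's claim that this case relies only on symmetrization-type tools. The genuine interest of the statement lies in the identification of the target-shaped worst set $A_\delta$, which will serve as the benchmark for the quantitative deficit estimate in the forthcoming proof of \Cref{thm: pnorm} when $p=1$.
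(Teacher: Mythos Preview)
Your proof is correct but takes a different route from the paper's. The paper argues via the linear decomposition $u_E = u_{E\cap B_*} + u_{E\cup B_*} - u_{B_*}$ and then applies Talenti's comparison (\Cref{thm: talenti}) to $u_{E\cap B_*}$ and $u_{E\cup B_*}$ separately, bounding their rearrangements by $u_{B_{r_1(\delta)}}$ and $u_{B_{r_2(\delta)}}$; integration over $B_1$ then gives the result. Your argument bypasses Talenti entirely: the duality $\J(E)=\int_E w$ against the explicit torsion function reduces the problem to a linear functional of $\chi_E$, after which the bathtub principle identifies $A_\delta$ directly. Your approach is more elementary in that it needs only the explicit form of $w$ and not the full symmetrization machinery; in fact the paper uses exactly your duality identity in the very next proof (\Cref{prop: p=1}) to compute $\J(B_*)-\J(A_\delta)$, so your route has the merit of unifying the two steps. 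The paper's route, on the other hand, keeps the argument squarely within the Talenti framework that motivates the whole paper, and makes transparent why the result is a direct corollary of symmetrization.
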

\begin{proof}
    By linearity we may write
    \[
        u_E=u_{E\cap B_*}+u_{E\cup B_*}-u_{B_*}.
    \]
    Using Talenti's comparison result (see \Cref{thm: talenti}), we know that 
    \[
        u_{E\cap B_*}^\sh \le u_{B_{r_1(\delta)}}, \qquad \qquad u_{E\cup B_*}^\sh\le u_{B_{r_2(\delta)}}.
    \]
    Integrating over $B_1$ the two inequalities, and using the equi-measurability of the Schwarz rearrangement, we get:
   \[
    \begin{split}
    \int_{B_1}u_E\,dx &=\int_{B_1}u_{E\cap B_*}^\sh\,dx+\int_{B_1}u_{E\cup B_*}^\sh\,dx-\int_{B_1}u_{B_*}\,dx\\
            &\le \int_{B_1}u_{B_{r_1(\delta)}}\, dx+\int_{B_1}u_{B_{r_2(\delta)}}\, dx-\int_{B_1}u_{B_*}\,dx =\int_{B_1}u_{A_\de}\,dx%
        .
    \end{split}
    \]
\end{proof} 
 
\begin{prop}\label{prop: p=1}
With the same notations as in \Cref{prop: f=s}, there exists a positive constant $c=c(m,n)$ such that
\[
    \J(B_*)-\J(E)\ge c\abs{E\De B_*}^2,
\]
for every measurable set $E\subseteq B_1$ such that $\abs{E}=m$. Moreover, the exponent 2 is optimal, in the sense that the inequality cannot be valid for any lower exponent.
\end{prop}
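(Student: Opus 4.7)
The plan is to reduce the problem to the radial target-shaped sets $A_\delta$ via \Cref{prop: f=s}, and then express the remaining deficit as a second-order finite difference of an explicit, smooth, strictly concave scalar function of the volume.

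First, I would apply \Cref{prop: f=s}: setting $\delta := |E\Delta B_*|$, it is enough to prove $\J(B_*) - \J(A_\delta) \ge c\delta^2$. By linearity of $f\mapsto u_f$ and the pointwise identity $\chi_{A_\delta} = \chi_{B_{r_1(\delta)}} + \chi_{B_{r_2(\delta)}} - \chi_{B_*}$, one obtains
\[
\J(B_*)-\J(A_\delta) \;=\; 2F(m) - F(m-\delta/2) - F(m+\delta/2) \;=:\; g(\delta),
\]
where $F(\mu) := \int_{B_1} u_{B_{r(\mu)}}\,dx$ with $r(\mu) := (\mu/\omega_n)^{1/n}$. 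The task thereby collapses to a one-variable estimate for $g$.

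Next I would compute $F$ in closed form. Multiplying $-\Delta u_{B_r}=\chi_{B_r}$ by the torsion function $u_{B_1}(x)=(1-|x|^2)/(2n)$, integrating by parts, and using that both functions vanish on $\partial B_1$, one gets
\[
F(\mu) \;=\; \int_{B_{r(\mu)}} \frac{1-|x|^2}{2n}\,dx \;=\; \frac{\mu}{2n} - \frac{\mu^{(n+2)/n}}{2(n+2)\,\omega_n^{2/n}}.
\]
Two derivatives give $F''(\mu) = -\tfrac{1}{n^2\omega_n}(\mu/\omega_n)^{(2-n)/n}$, which is strictly negative on $(0,|B_1|]$, so $F$ is strictly concave. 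Moreover $s \mapsto -F''(s)$ is linear for $n=1$, constant for $n=2$, and convex for $n\ge 3$ (the exponent $(2-n)/n$ being negative), so in all cases Jensen's inequality yields the uniform bound
\[
g''(\delta) \;=\; -\tfrac14\bigl[F''(m-\delta/2)+F''(m+\delta/2)\bigr] \;\ge\; -\tfrac12 F''(m) \;>\; 0,
\]
valid for every $\delta \in [0,\delta_{\max})$, where $\delta_{\max}:=\min(2m,2(|B_1|-m))$. Since $g(0)=g'(0)=0$, integrating this twice gives $g(\delta) \ge -F''(m)\,\delta^2/4$, proving the inequality with the explicit constant $c(m,n) := -F''(m)/4 > 0$.

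For the sharpness statement, since $F$ is smooth near $m>0$, Taylor expansion yields $g(\delta) = -F''(m)\,\delta^2/4 + O(\delta^3)$ as $\delta \to 0^+$; testing \eqref{eq: stabpnorm} (with exponent $\alpha$ instead of $2$) against $E=A_\delta$ and letting $\delta\to 0$ forces $(\J(B_*)-\J(E))/\delta^\alpha\to 0$ for every $\alpha<2$, ruling out any smaller exponent. I do not foresee genuine obstacles in executing this plan: the crux is the linearity step that reduces everything to the scalar function $F$, together with the elementary Jensen observation on $s\mapsto s^{(2-n)/n}$ securing the uniform positivity of $g''$; only a minor subtlety is the boundary value $\delta=\delta_{\max}$, handled by continuous extension of $g$ (with $F(0)=0$).
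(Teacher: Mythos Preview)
Your proof is correct and follows essentially the same route as the paper: both reduce to $A_\delta$ via \Cref{prop: f=s}, integrate against the torsion function $w(x)=(1-|x|^2)/(2n)$ to obtain the deficit as a centred second difference of $\mu\mapsto \mu^{(n+2)/n}$ (up to the linear part, which cancels), and then invoke convexity. The only minor difference is that the paper bounds the second derivative of $x\mapsto x^{(n+2)/n}$ below by its value at $x=1$ (treating $n=1$ separately), whereas you bound $g''(\delta)$ below by $g''(0)$ via Jensen on $s\mapsto s^{(2-n)/n}$, which handles all $n$ at once; this is a cosmetic variation, not a different argument.
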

\begin{proof}
Let $E\subset B_1$ of volume $m$. Then $\delta=|E\Delta B_*|\in[0,
\min\{2m,2(|B_1|-m)\})$. If $\delta=0$ then $E=B_*$ a.e. and $\J(B_*)=\J(E)$.
If however $\delta>0$, then we have by Proposition \ref{prop: f=s}:
$$\J(B_*)-\J(E)\geq \J(B_*)-\J(A_\delta).$$
We introduce $w\in H^1_0(B_1)$ the unique solution to $-\Delta w=1$ in $B_1$. Classical computations lead to $w(x)=w(\abs{x})=\frac{1-|x|^2}{2n}$. Moreover
\begin{eqnarray*}\J(B_*)-\J(A_\delta)&=&\int_{B_1}(-\Delta w)[u_{B_*}-u_{A_\delta}]
=\int_{B_1}w[\chi_{B_*}-\chi_{A_\delta}]\\
&=&|\S^{n-1}|\left(\int_{r_1(\delta)}^{r_*}w(r)r^{n-1}dr-\int_{r_*}^{r_2(\delta)}w(r)r^{n-1}dr\right)\\
&=&\frac{|\S^{n-1}|}{2n(n+2)}\Big(r_1(\delta)^{n+2}+r_2(\delta)^{n+2}-2r_*^{n+2}\Big)
\end{eqnarray*}
When $n=1$ we compute explicitely $r_1(\delta)^3+r_2(\delta)^3-2r_*^3=\frac{3}{2^4}m\delta^2$  so that $c=m/2^4$ works. In the case $n\geq 2$, we use that the function $\varphi:x\in(0,1)\mapsto x^{(n+2)/n}$ is strongly convex ($\varphi''\geq \frac{2(n+2)}{n^2}$), so that
\begin{equation*}
\begin{split}	
r_1(\delta)^{n+2}+r_2(\delta)^{n+2}-2r_*^{n+2}&=\varphi\left(\frac{m-\frac{\delta}{2}}{|B_1|}\right)+\varphi\left(\frac{m+\frac{\delta}{2}}{|B_1|}\right)-2\varphi\left(\frac{m}{|B_1|}\right)\\[7 pt]
&\geq\frac{2(n+2)}{n^2}\left(\frac{\delta}{2|B_1|}\right)^2
\end{split}
\end{equation*}
hence the result with $c=\frac{1}{4n^2\omega_n}$.
Finally, to show that the exponent is optimal, we use a Taylor expansion in the previous computation to obtain:
\[\J(B_*)-\J(A_\delta)=\frac{m^{2/n-1}}{4n^2\omega_n^{2/n}}\delta^2+o(\delta^2).\]
Therefore, if $\alpha\in(0,2)$ then
\[\frac{\J(B_*)-\J(A_\delta)}{|B_*\Delta A_\delta|^\alpha}\xrightarrow[\;\delta\to 0\;]\; 0.\]
\end{proof}
\begin{rem}
    \label{rem: j'positive}
    We notice that the case $p=1$ implies \Cref{thm: maintheorem} when $j$ is in $C^1(\R_+)$, convex and $j'(0)>0$. Indeed, we have  by convexity of $j$
    \[
        \J(B_*)-\J(E)\ge \int_{B_1}j'(u_{E})(u_{B_*}-u_E)\,dx \ge j'(0) \ml(\norm{u_{B_*}}_1-\norm{u_E}_1\mr),
    \]
    so the stability inequality for $\J$ follows from \Cref{prop: p=1}.
   Of course, this does not apply to the case $j:s\mapsto s^p$ ($p>1$) whose derivative vanishes at 0.
\end{rem}
\subsection{Optimization among densities}\label{ssect: densities}

In this section, we prove the following:

\begin{prop}
\label{prop: exist}
Let $j\in C^0(\R_+)$ be a convex non-decreasing function and $m\in(0,|B_1|)$. Then the functional $\J:V\in \M_m\mapsto \int_{B_1}j(u_V)$ is well-defined, convex, and
\begin{equation}\label{eq:maxJ}
            \J(B_*)=\max_{V\in\M_m}\J(V).
\end{equation}
Moreover, the following properties hold:
\begin{enumerate}
\item for any $\delta\in(0,\min\{2m,2(|B_1|-m)\})$, we define the set of \emph{fixed asymmetry} weights
\begin{equation*}
\mathcal \M_m^\de:=\Set{V \in \M_m\; ,\; \norm{V-\chi_{B_*}}_{1}=\delta }.
\end{equation*}
Then the optimization problem \begin{equation}
\label{Eq:PvDelta}
\sup_{V \in \M_m^\de}\J(V)
\end{equation} admits a bang-bang solution, which means there exists a set $E_\de$ such that 
\[
\J(E_\de)=\max_{V \in{\M_m^\de}}\J(V).
\]
\item if $j$ is strictly convex and strictly increasing in $(0,\infty)$, then $\chi_{B_*}$ is the unique maximizer to~\eqref{eq:maxJ}.
\end{enumerate}
\end{prop}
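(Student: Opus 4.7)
My strategy is to reduce each assertion to a statement about \emph{extreme points} of the relevant convex sets (which turn out to be characteristic functions) via Bauer's maximum principle, and then to identify those extremal maximizers using Talenti's comparison (\Cref{thm: talenti}) and its rigidity version (\Cref{thm: talentirigid}).

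First I would verify that $\J$ is well-defined, convex and weak-$*$ continuous on $\M_m$. Well-definedness follows from $u_V\in L^\infty(B_1)$ by elliptic regularity, with a bound depending only on $\norm{V}_\infty\le 1$, and convexity from the linearity of $V\mapsto u_V$ combined with convexity of $j$. The more delicate point is weak-$*$ continuity: if $V_n\rightharpoonup^* V$ in $L^\infty$, elliptic regularity yields $u_{V_n}\to u_V$ strongly in $L^\infty$, and dominated convergence gives $\J(V_n)\to\J(V)$. Combined with the weak-$*$ compactness of $\M_m$ (Banach--Alaoglu) and the standard characterization of extreme points of $\M_m$ as characteristic functions $\chi_E$ with $|E|=m$, Bauer's maximum principle then yields a maximizer of the form $\chi_E$. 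Applying \Cref{thm: talenti} with the equimeasurability of Schwarz rearrangement and the monotonicity of $j$ gives $\J(\chi_E)\le\J(\chi_{E^\sh})=\J(\chi_{B_*})$, which proves \eqref{eq:maxJ}.

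For item (1), I would introduce the affine change of variables $V=\chi_{B_*}+W_+-W_-$, with $W_\pm\in[0,1]$ supported respectively in $B_1\setminus B_*$ and $B_*$ and $\int W_\pm=\delta/2$. This gives a bijection between $\M_m^\delta$ and a product $\mathcal{W}_+\times\mathcal{W}_-$ of two weak-$*$ compact convex sets, on which the induced functional is still convex and weak-$*$ continuous. Applying Bauer to the product, and using that its extreme points are pairs of characteristic functions $(\chi_{A_+},\chi_{A_-})$ with $|A_\pm|=\delta/2$, produces a maximizing set $E_\delta=(B_*\setminus A_-)\cup A_+$, for which a direct check gives $|E_\delta|=m$ and $|E_\delta\De B_*|=\delta$.

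For item (2), strict convexity of $j$ combined with $u_V>0$ in $B_1$ (strong maximum principle, since $V\ge 0$ with $\int V=m>0$) promotes $\J$ to a strictly convex functional on $\M_m$. A short contradiction argument then shows that any maximizer of a strictly convex functional on a convex set must be an extreme point: if $V=\tfrac{1}{2}(V_1+V_2)$ with $V_1\ne V_2$ in $\M_m$, strict convexity would give $\J(V)<\tfrac{1}{2}(\J(V_1)+\J(V_2))\le\max\J=\J(V)$, a contradiction. Hence any maximizer is a characteristic function $\chi_E$, and \Cref{thm: talentirigid} applied in the equality case of \Cref{thm: talenti} (using strict monotonicity of $j$ on $(0,\infty)$ together with $u_{B_*}>0$ in $B_1$) forces $E=B_*$, as $\Om=\Om^\sh=B_1$ pins the translation vector to zero. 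The main subtlety I expect is the careful verification of weak-$*$ continuity of $\J$; the key insight for uniqueness is that strict convexity by itself (with no smoothness assumption on $j$) already confines maximizers to extreme points of $\M_m$, so that the argument reduces to the classical rigidity of Talenti's inequality among characteristic functions.
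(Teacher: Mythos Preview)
Your proposal is correct and follows essentially the same architecture as the paper: weak-$*$ continuity of $\J$ via elliptic regularity, convexity of $\J$ from linearity of $V\mapsto u_V$, existence of bang-bang maximizers from convexity plus compactness, identification of $B_*$ via Talenti, and uniqueness from strict convexity plus Talenti rigidity.

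The one place where your route differs is item~(1). The paper verifies directly (in a separate lemma) that $\M_m^\de$ is convex, weak-$*$ compact, and has only characteristic functions as extreme points; the compactness step requires an auxiliary lemma showing that weak-$*$ convergence preserves $\norm{V-\chi_{B_*}}_1$ thanks to the fixed sign of $V-\chi_{B_*}$ on $B_*$ and on $B_1\setminus B_*$, and the extreme-point step proceeds by an ad hoc splitting of $\{\eps<V<1-\eps\}$ into four pieces. Your affine identification $V\leftrightarrow (W_+,W_-)\in\mathcal W_+\times\mathcal W_-$ makes all three properties transparent at once, since each factor is a standard ``$\M$-type'' set (on $B_1\setminus B_*$ and $B_*$ respectively), whose convexity, weak-$*$ compactness, and bang-bang extreme points are classical; extreme points of the product are then pairs of extreme points. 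Both arguments rest on the same observation about the sign of $V-\chi_{B_*}$, but your packaging is cleaner and avoids the hands-on verification. Conversely, the paper's approach has the minor advantage of working intrinsically with $\M_m^\de$ rather than introducing an auxiliary product space.
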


   Before proving this result, we state first a classical elliptic regularity theorem that will be used often throughout the paper. We refer for instance to \cite[Theorem 9.13, Theorem 8.16]{GT98}.
    \begin{thm}
    \label{thm: regularity}
        Let $q>\max\{1,n/2\}$. Then there exists $C=C(n,q)$ such that for every $f\in L^q(B_1)$,
        \[
            \norm{u_f}_{2,q}\le C\norm{f}_q,
        \]
        where $u_f{\in H^1_0(B_1)}$ is the unique solution to \eqref{eq:elliptic}.
    \end{thm}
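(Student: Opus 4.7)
The statement is a classical Calderón--Zygmund-type regularity estimate for the Dirichlet Laplacian on the smooth domain $B_1$. My plan is to combine two standard ingredients: an $L^\infty$ a priori bound on $u_f$, available precisely when $q>n/2$, and a global $W^{2,q}$ estimate for the Laplacian on a smooth bounded domain, valid for every $1<q<\infty$. The hypothesis $q>\max\{1,n/2\}$ is then exactly the intersection of the constraints coming from the two ingredients.

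First, I would bound $\norm{u_f}_{L^\infty(B_1)}$ in terms of $\norm{f}_q$. Testing the equation $-\Delta u_f=f$ against powers of truncations of $u_f$ and invoking the Sobolev embedding, a Moser/Stampacchia iteration yields $\norm{u_f}_{L^\infty(B_1)}\le C_1(n,q)\norm{f}_q$, which is the content of \cite[Theorem 8.16]{GT98}. The condition $q>n/2$ is exactly what is needed to start the iteration, since one requires the dual exponent of $L^q$ to lie strictly below the Sobolev conjugate $2^*=2n/(n-2)$. For $n\le 2$, where the Sobolev exponent degenerates, the weaker requirement $q>1$ suffices via an essentially identical but simpler argument (in $n=1$ even $H^1_0\hookrightarrow L^\infty$), which explains the $\max$ in the hypothesis.

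Next, I would upgrade this to the full $W^{2,q}$ bound using the global Calderón--Zygmund estimate for the Laplacian on $B_1$: for $1<q<\infty$ and every $u\in W^{2,q}(B_1)\cap W^{1,q}_0(B_1)$,
\[
\norm{u}_{2,q}\le C_2(n,q)\bigl(\norm{-\Delta u}_q+\norm{u}_q\bigr).
\]
Applied to $u_f$, together with the crude bound $\norm{u_f}_q\le \abs{B_1}^{1/q}\norm{u_f}_\infty$ and the $L^\infty$ estimate above, this immediately yields $\norm{u_f}_{2,q}\le C(n,q)\norm{f}_q$ with $C(n,q):=C_2\bigl(1+\abs{B_1}^{1/q}C_1\bigr)$, which is the desired estimate.

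The main obstacle is the Calderón--Zygmund $W^{2,q}$ estimate itself, specifically its validity up to the boundary $\partial B_1$. The standard proof proceeds by localization, flattening of $\partial B_1$ (legitimate because the boundary is smooth), reduction to the half-space, and then even/odd reflection to $\R^n$ combined with the $L^q$-continuity of the Riesz transforms (which fails at $q=1$ and $q=\infty$, hence the restriction $1<q<\infty$). Since this is classical and rather long, I would simply invoke \cite[Theorem 9.13]{GT98} rather than reproduce the argument.
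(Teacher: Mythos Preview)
Your proposal is correct and coincides with the paper's approach: the paper does not give a proof but merely cites \cite[Theorem~9.13, Theorem~8.16]{GT98}, which are precisely the two ingredients you combine. Your write-up is in fact more detailed than the paper's, which simply invokes these references.
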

Also, we need a few technical results: 
classically, weak-$*$ $L^\infty$ convergence does not imply convergence of the $L^1$-norm, but if one has some control on the sign of the involved functions, then one can retrieve such convergence:
\begin{lemma}
\label{lem: weakstarcomp}
Let $h\in L^\infty(B_1)$ and $(h_k)$ be a sequence of functions in $L^\infty(B_1)$ such that
\[
h_k\xrightharpoonup{*-L^\infty(B_1)} h.
\]
We assume that there exists $E\sbs B_1$ such that 
\[\forall k\in\N, \qquad
h_k\ge 0 \text{ in $E$}, \qquad\textrm{ and }\qquad  h_k\le 0 \text{ in $B_1\setminus E$}.
\]
Then
\begin{equation}
    \label{eq: L1conv}
\lim_k \int_{B_1} \abs{h_k}\, dx=\int_{B_1} \abs{h}.
\end{equation}
\end{lemma}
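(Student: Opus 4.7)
The plan is to exploit the fact that the sign pattern of $h_k$ is independent of $k$, so that taking the absolute value is a fixed linear operation. Concretely, set
\[
g := \chi_E - \chi_{B_1\setminus E} \in L^\infty(B_1) \subset L^1(B_1).
\]
The sign assumption on $h_k$ means $|h_k| = h_k g$ almost everywhere in $B_1$, for every $k$.

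Since $g\in L^1(B_1)$, weak-$*$ $L^\infty$ convergence of $(h_k)$ to $h$ gives
\[
\int_{B_1}|h_k|\,dx \;=\; \int_{B_1} h_k\,g\,dx \;\xrightarrow[k\to\infty]{}\; \int_{B_1} h\,g\,dx.
\]
It remains to identify this limit as $\int_{B_1}|h|\,dx$, which amounts to showing that $h$ inherits the sign pattern of the $h_k$'s: $h\ge 0$ a.e.\ on $E$ and $h\le 0$ a.e.\ on $B_1\setminus E$. For any nonnegative test function $\varphi\in L^1(B_1)$, the function $\chi_E\varphi$ lies in $L^1(B_1)$, hence
\[
\int_E h\,\varphi\,dx \;=\; \lim_k \int_{B_1} h_k\,\chi_E\,\varphi\,dx \;\ge 0,
\]
and varying $\varphi$ yields $h\ge 0$ a.e.\ on $E$; the argument on $B_1\setminus E$ is identical. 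Consequently $|h| = h\,g$ a.e., and the conclusion \eqref{eq: L1conv} follows.

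The argument is essentially one line once the right observation is made; the only thing to be a bit careful about is the passage from the sign of $h_k$ to the sign of $h$ under weak-$*$ convergence, which is a standard testing argument and presents no real obstacle.
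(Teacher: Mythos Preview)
Your proof is correct and essentially the same as the paper's: the paper splits $h_k$ into $h_k^+=h_k\chi_E$ and $h_k^-=-h_k\chi_{E^c}$ and passes to the weak-$*$ limit in each piece, whereas you combine the two pieces into the single test function $g=\chi_E-\chi_{E^c}$ and test once. Your version is slightly more explicit in justifying that $h$ inherits the sign pattern (the paper asserts $h\chi_E=h^+$ without comment), but the underlying argument is identical.
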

\begin{proof}
It is sufficient to notice that under these assumptions, 
\[
h_k^+=h_k\chi_{E}, \qquad \qquad h_k^-=h_k\chi_{E^c},
\]
where $h_k^+$ and $h_k^-$ are the positive part and the negative part of $h_k$ respectively. In particular, by weak convergence of $h_k$, we get
\[
\begin{split}
h_k^+\xrightharpoonup{*-L^\infty({B_1})} h^+=h\chi_{E}, \\[7 pt]
h_k^-\xrightharpoonup{*-L^\infty({B_1})} h^-=h\chi_{E^c},
\end{split}
\]
which implies \eqref{eq: L1conv}    
\end{proof}

\begin{lemma}\label{lem: step1comp}
Let $m\in(0,|B_1|)$ and $\delta\in(0,\min\{2m,2(|B_1|-m)\})$. Then the sets $\M_m, \M_m^\de$ are convex and compact with respect to the weak-$*$ $L^\infty$ topology, and their extremals are characteristic functions.
\end{lemma}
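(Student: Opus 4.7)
I would treat convexity, compactness, and the characterization of the extremal points separately.

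For convexity, $\M_m$ is immediate since the constraints $0\le V\le 1$ and $\int_{B_1}V=m$ are all stable under convex combinations. For $\M_m^\de$ the key observation is that, on $\M_m$, the asymmetry $\norm{V-\chi_{B_*}}_1$ is actually a \emph{linear} functional: since $V-\chi_{B_*}\le 0$ on $B_*$ and $V-\chi_{B_*}\ge 0$ on $B_1\setminus B_*$, one finds
\[
\norm{V-\chi_{B_*}}_1=\int_{B_*}(1-V)+\int_{B_1\setminus B_*}V=2\int_{B_1\setminus B_*}V,
\]
using $\int_{B_1} V=m=\abs{B_*}$. Hence the defining condition of $\M_m^\de$ reduces to the single linear equality $\int_{B_1\setminus B_*}V=\de/2$, so $\M_m^\de$ is the intersection of $\M_m$ with an affine hyperplane and thus convex.

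For compactness, $\M_m$ is bounded in $L^\infty(B_1)$, so by Banach--Alaoglu it is sequentially relatively compact for the weak-$*$ $L^\infty$ topology. Stability of the constraints under weak-$*$ limits is checked by pairing with suitable $\phi\in L^1(B_1)$, which is legitimate because $B_1$ has finite measure: the choice $\phi\equiv 1$ preserves $\int V=m$, and testing $V_k\ge 0$ and $1-V_k\ge 0$ against arbitrary $\phi\in L^1_+(B_1)$ preserves the pointwise bounds in the limit. For $\M_m^\de$ the additional linear constraint passes to the limit by choosing $\phi=\chi_{B_1\setminus B_*}\in L^1(B_1)$.

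For the extremals, I would use the standard bang-bang perturbation. On the one hand, if $\chi_E=(V_1+V_2)/2$ with $V_1,V_2\in\M_m$, then on $E^c$ the equality $V_1+V_2=0$ combined with $V_i\ge 0$ forces $V_1=V_2=0$, and symmetrically $V_1=V_2=1$ on $E$, so every characteristic function is extremal. Conversely, if $V\in\M_m$ is not a characteristic function, then for some $\eps>0$ the set $A=\Set{\eps<V<1-\eps}$ has positive measure; splitting it into two measurable pieces $A_+,A_-$ of equal measure and setting $\eta=\chi_{A_+}-\chi_{A_-}$ gives $V\pm\eps\eta\in\M_m$ and $V=\tfrac12\bigl((V+\eps\eta)+(V-\eps\eta)\bigr)$, so $V$ is not extremal. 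For $\M_m^\de$ the same argument applies, provided the perturbation also satisfies $\int_{B_*}\eta=0$: since $A$ has positive measure, at least one of $A\cap B_*$, $A\cap(B_1\setminus B_*)$ does, and performing the equal-measure split entirely inside whichever of these two regions has positive measure gives $\int_{B_*}\eta=0$ automatically.

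The only routine technical step is the measurable equal-measure splitting of a set of positive measure, which follows from the intermediate value property of $r\mapsto\abs{A\cap B_r}$. I do not foresee a real obstacle; the whole statement is a soft convex-analytic fact once the linearization of $\norm{V-\chi_{B_*}}_1$ on $\M_m$ is spotted, and that linearization is what makes $\M_m^\de$ behave as well as $\M_m$.
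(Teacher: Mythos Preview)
Your proof is correct. The key observation that $\norm{V-\chi_{B_*}}_1=2\int_{B_1\setminus B_*}V$ is an affine functional on $\M_m$ is exactly right, and it streamlines all three parts of the argument.

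Compared with the paper's proof, the differences are as follows. For convexity and compactness of $\M_m^\de$, the paper works directly with the nonlinear expression $\norm{W-\chi_{B_*}}_1$ and invokes a separate lemma (\Cref{lem: weakstarcomp}) to pass $L^1$-norms of sign-controlled functions to weak-$*$ limits; your linearization reduces the asymmetry constraint to a single pairing with $\chi_{B_1\setminus B_*}\in L^1$, so both convexity and weak-$*$ closedness are immediate without any auxiliary lemma. For the extremals of $\M_m^\de$, the paper first establishes that \emph{both} $\{0<W<1\}\cap B_*$ and $\{0<W<1\}\setminus B_*$ have positive measure, and then builds a perturbation supported on four pieces (two in $B_*$, two outside). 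Your argument is lighter: you only need \emph{one} of the two regions to have positive measure, and you perturb entirely inside it, which automatically preserves the linearized constraint $\int_{B_1\setminus B_*}\eta=0$. Both approaches are valid; yours buys a shorter proof, while the paper's more symmetric four-piece construction is not needed for the stated lemma.
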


\begin{proof}
\begin{proofItemize}
\item 
The convexity for $\M_m$ is immediate by definition, while for $\M_m^\de$, we need to show that if $W_0,W_1\in\M_m^\de$ and 
    \[
        W_\alpha=\alpha W_1+(1-\alpha)W_0,
    \]
for $\al\in(0,1)$, then $\norm{W_\alpha-V_0}_1=\delta$. This is true since $0\le W_\alpha\le 1$, and by definition of $V_0$ we have
    \[
        W_\al\le V_0 \text{ in }B_*, \qquad \qquad   W_\al\ge V_0 \text{ in }B_1\setminus B_*,
    \]
    so that explicitly computing the $L^1$ norms,
    \[
        \norm{W_\al-V_0}_1=\alpha\norm{W_1-V_0}_1+(1-\alpha)\norm{W_0-V_0}_1=\de.
    \]
    \item $\M_m$ is compact because if $W_k$ weakly-$*$ converges in $L^\infty$ to some $W$, then 
    \[
        m=\lim_k \int_{B_1}W_k\,dx=\int_{B_1}W\,dx.
    \]
    For what regards $\M_m^\de$, if we take a sequence $W_k\in \M_m^\de$ converging to $W$ weakly-$*$ in $L^\infty$, then the functions $\chi_{B_*}-W_k$ satisfy the assumptions of \Cref{lem: weakstarcomp}, and then
    \[
        \de=\lim_{k}\norm{W_k-\chi_{B_*}}_1=\norm{W-\chi_{B_*}}.
    \]
    \item The fact that extremal points of $\M_m$ are characteristic functions is classical (see for instance \cite[Prop. 7.2.17]{HP18}). Let us detail the same result for $\M_m^\de$:  let $W\in \M_m^\de$, and assume that
    \[
        \abs{\Set{0<W<1}}>0 ;
    \]
    we must show that $W$ is not extremal. Since $\norm{W}_1=m=\|\chi_{B_*}\|_1$, we have
    \[
        0<\de=\norm{W-\chi_{B_*}}_1=2\int_{B_*}^{}(1-W)\,dx \le 2\abs{\set{0<W<1}\cap B_*}.
    \]
    Analogously we prove $\abs{\set{0<W<1}\setminus B_*}>0$. Therefore, for small $\eps>0$ we can split the set $\set{\eps <W<1-\eps}$ in four pairwise disjoint subsets $S_1,S_2,S_3,S_4$ of $B_1$ such that 
    \[
    \abs{S_1}=\abs{S_2}>0, \qquad \qquad \abs{S_3}=\abs{S_4}>0,
    \]
    and
    \[
        S_1\cup S_2 = \Set{\eps<W<1-\eps}\cap B_* \qquad \qquad S_3\cup S_4 = \Set{\eps<W<1-\eps}\sm B_*.
    \]
    Under these assumptions we can write
    \[
        W=\fr{1}{2}(W-\eps\chi_{S_1\cup S_3}+\eps\chi_{S_2\cup S_4})+\fr{1}{2}(W+\eps\chi_{S_1\cup S_3}-\eps\chi_{S_2\cup S_4}),
    \]
    with $W\mp\eps\chi_{S_1\cup S_3}\pm\eps\chi_{S_2\cup S_4}\in \M_m^\de$. This proves that a non-bang-bang function is not an extreme point for $\M_m^\de$. 
    \end{proofItemize}
\end{proof}

\begin{proof}[Proof of Proposition \ref{prop: exist}] For $V\in\M_m$, $u_V\geq 0$ is bounded, so $j(u_V)\in L^1(B_1)$ and $\J$ is well-defined.

\medskip\begin{proofItemize}
	
\item {\bf Convexity of $\J$:}   given $W_0,W_1\in \M_m$ and $\alpha\in[0,1]$, if we take $W_\alpha=\alpha W_1+(1-\alpha)W_0$, then by linearity with respect to $V$ of the equation \eqref{eq:elliptic}, we get
    \[
        u_{W_\alpha}=\alpha u_{W_1}+(1-\alpha)u_{W_0},
    \]
    which, joint with the convexity of $j$ gives
    \[
        \J(W_\alpha)\le \alpha \J(W_1)+(1-\alpha)\J(W_0).
    \]

\item {\bf $\J(\cdot)$ is continuous with respect to the weak-$*$ $L^\infty$ convergence:} let $W_k$ be a sequence converging in the weak-$*$ $L^\infty$  sense to some function $W_\infty$. Let $u_k:=u_{W_k}\in H^1_0(B_1)$ solution to \eqref{eq:elliptic}.
    We show that $u_k$ converges to $u_{W_\infty}$.
    Let $q>n$; since $(W_k)$ are equibounded in $L^\infty$, \Cref{thm: regularity} applies, and we have that $(u_k)$ are equi-bounded in $W^{2,q}({B_1})$, namely there exists some constant $C=C(n,q)>0$ such that
    \[
        \norm{u_k}_{2,q}\le C.
    \]
    Therefore, by Kondrachov Theorem, there exists a subsequence (not relabelled) such that
    \[
        u_k\xrightarrow[]{W^{1,q}({B_1})}u
    \]
    for some $u\in W^{{1},q}({B_1})$. In particular, by Sobolev's imbeddings, the convergence happens strongly in $L^\infty$, and $u\in H^1_0(B_1)$. Moreover, using the weak-$*$ convergence of $W_k$, we get, passing to the limit in the weak formulation of  \eqref{eq:elliptic}, 
    \[
        \int_{B_1} \nabla u\cdot\nabla \varphi \, dx=\int_{B_1} W_\infty\,\varphi \, dx \qquad \qquad \forall\varphi\in H^1_0({B_1}),
    \]
    which implies $u=u_{W_\infty}$. Finally, since $j$ is continuous and $u_k$ converges strongly in $L^\infty$ to $u$, then
    \[
        \lim_k \J(W_k)=\lim_k \int_{B_1}j(u_k)\,dx=\J(W_\infty).
    \]
    Since the argument is valid for every choice of the subsequence, this proves the continuity.

 \item {\bf Solution to \eqref{eq:maxJ} and existence to \eqref{Eq:PvDelta}:} by \Cref{lem: step1comp} about compactness of $\M_m$ and $\M_m^\delta$ and the continuity proved in the previous step, we get the existence of solutions  to \eqref{eq:maxJ} and \eqref{Eq:PvDelta}. As extremal points of $\M_m$ and $\M_m^\delta$ are bang-bang functions, by convexity of $\J$ we get that there exist bang-bang solutions to \eqref{eq:maxJ} and \eqref{Eq:PvDelta}. Moreover, the monotonicity of $j$ implies that $j(u^\sh)=j(u)^\sh$, so that, using Talenti's inequality, we deduce that $B_*$ solves \eqref{eq:maxJ}.

\item {\bf Uniqueness:} since $j$ is strictly convex, also $\J$ is strictly convex and any maximizer of $\J$ is necessarily bang-bang. In particular, if $E$ is an optimal set such that $\chi_E$ maximizes $\J$ we have that 
        \begin{equation}
        \label{eq: equalityJ}
            \J(B_*)=\J(E)=\int_{B_1}j(u_E)\,dx =\int_{B_1}j(u_E^\sh)\,dx,
        \end{equation}
        where we used that the monotonicity of $j$ implies $j(u_E^\sh)=j(u_E)^\sh$.
        By Talenti's inequality we know that $j(u_E^\sh) \le j(u_{B_*})$. The strict monotonicity of $j$ and \eqref{eq: equalityJ} ensure that $u_E^\sh=u_{B_*}$ almost everywhere. By the rigidity of Talenti's inequality (see \Cref{thm: talentirigid}) we get $E=B_*$.
\end{proofItemize}
        
    \end{proof}

\subsection{Sharpness of the exponent}
We show that the exponent 2 is sharp in \Cref{thm: maintheorem}, in the sense that for $\alpha\in(0,2)$, one can find a sequence of sets $E_k\in\M_m\setminus\{B_*\}$ such that
\[\frac{\J(B_*)-\J(E_k)}{|B_*\Delta E_k|^\alpha}\xrightarrow[\;k\to\infty\;]\; 0.\]
We proceed as in \Cref{ssect: p=1} by using the annulus $A_\delta$ (who is a candidate as being the worst asymmetric set with regard to Talenti's deficit, though we are not in position to prove it except in the case of the $L^1$-norm). More precisely, we show the following result: 

\begin{prop}
\label{prop: sharp}
    Let $m\in(0,\abs{B_1})$, and let $j\in C^1(\R_+)$ be a non-constant, non-decreasing, convex function. Then there exists $\bar{\de}>0$ and a positive constant $C=C(j,m,n)$ such that for every $\delta\in (0,\bar{\de})$,  
    \begin{equation}
        \label{eq: sharpnessADelta}
        \fr{1}{C}\de^2\le \J(B_*)-\J(A_\delta)\le C\delta^2.
    \end{equation}
\end{prop}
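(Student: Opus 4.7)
The plan is to derive a sharp two-sided asymptotic $\J(B_*) - \J(A_\delta) = C_j\delta^2 + o(\delta^2)$ with $C_j=C_j(j,m,n) > 0$, via a careful Taylor expansion in $\delta$. The starting point is the linear decomposition $u_{A_\delta} = u_{B_{r_1(\delta)}} + u_{B_{r_2(\delta)}} - u_{B_*}$ used in the proof of \Cref{prop: f=s}, which gives
\[
w_\delta := u_{B_*} - u_{A_\delta} = 2u_{B_*} - u_{B_{r_1(\delta)}} - u_{B_{r_2(\delta)}}.
\]
Expanding $(1\pm \delta/(2m))^{1/n}$ yields $r_i(\delta) - r_* = \pm \tfrac{r_*}{2mn}\delta + O(\delta^2)$ and, crucially,
\[
r_1(\delta)+r_2(\delta) - 2r_* = -\tfrac{(n-1)r_*}{4m^2 n^2}\delta^2 + O(\delta^3).
\]
Since $r \mapsto u_{B_r}(x)$ is smooth, Taylor expanding $w_\delta$ at $r_*$ shows that the linear contribution is $O(\delta^2)$ and the quadratic one is also $O(\delta^2)$. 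Hence $\|w_\delta\|_\infty = O(\delta^2)$. A direct integration of the radial ODE $-(r^{n-1}w_\delta')' = r^{n-1}(\chi_{B_*}-\chi_{A_\delta})$ additionally shows $w_\delta \geq 0$.

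Next, I would use convexity of $j$ to bracket the integrand: for a.e.\ $x$,
\[
j'(u_{A_\delta})\,w_\delta \;\leq\; j(u_{B_*}) - j(u_{A_\delta}) \;\leq\; j'(u_{B_*})\,w_\delta.
\]
Since $j'\in C^0(\R_+)$ is uniformly continuous on the compact interval $[0,\|u_{B_*}\|_\infty]$ (where both $u_{B_*}$ and $u_{A_\delta}$ take values, by Talenti), its modulus $\omega$ satisfies $|j'(u_{B_*})-j'(u_{A_\delta})| \leq \omega(\|w_\delta\|_\infty) = \omega(O(\delta^2)) = o(1)$, so that
\[
\J(B_*) - \J(A_\delta) = \int_{B_1} j'(u_{B_*})\,w_\delta\,dx + o(\delta^2),
\]
and both bounds in \eqref{eq: sharpnessADelta} reduce to evaluating this integral.

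To compute it, introduce $\psi\in H^1_0(B_1)$ solving $-\Delta \psi = j'(u_{B_*})$; $\psi$ is radial. Integration by parts, using $-\Delta w_\delta = \chi_{B_*} - \chi_{A_\delta}$, gives
\[
\int_{B_1} j'(u_{B_*})\,w_\delta\,dx = \int_{B_1}(\chi_{B_*}-\chi_{A_\delta})\,\psi\,dx = |\mathbb{S}^{n-1}|\bigl[2H(r_*)-H(r_1(\delta))-H(r_2(\delta))\bigr],
\]
where $H(r):=\int_0^r \psi(s)s^{n-1}\,ds$. Taylor expansion of $H$ to second order, combined with the expansions of $r_i(\delta)$ above and the identity $(n-1)H'(r_*) - r_* H''(r_*) = -r_*^n\psi'(r_*)$, yields
\[
\int_{B_1} j'(u_{B_*})\,w_\delta\,dx = \frac{|\mathbb{S}^{n-1}|\,r_*^{n+1}}{4m^2 n^2}\,\bigl(-\psi'(r_*)\bigr)\,\delta^2 + O(\delta^3).
\]
(A sanity check: for $j(s)=s$ one has $\psi = (1-|x|^2)/(2n)$ and this reproduces the explicit formula obtained in the proof of \Cref{prop: p=1}.)

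It only remains to see $-\psi'(r_*)>0$. Writing the radial ODE in integrated form,
\[
-r_*^{n-1}\psi'(r_*) = \int_0^{r_*} s^{n-1}\,j'(u_{B_*}(s))\,ds.
\]
Under the (implicit) assumption that $j$ is non-constant on $[0,\|u_{B_*}\|_\infty]$ (the contrary trivially gives $\J(B_*) = \J(A_\delta)$ and the lower bound becomes vacuous), $j'$ is non-negative and not identically zero on this interval; by monotonicity of $j'$ and continuity of $u_{B_*}$, which attains its maximum at the origin, $j'(u_{B_*}(\cdot))>0$ on a neighborhood of $0$, so the integral is strictly positive. Setting $C_j:=\tfrac{|\mathbb{S}^{n-1}|r_*^{n+1}}{4m^2n^2}(-\psi'(r_*))>0$, we obtain $\J(B_*) - \J(A_\delta) = C_j\delta^2 + o(\delta^2)$, which proves both bounds in \eqref{eq: sharpnessADelta} for $\delta < \bar\delta$ small enough. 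The main obstacle is Step~1: showing $\|w_\delta\|_\infty = O(\delta^2)$ instead of the naive $O(\delta)$ relies precisely on the geometric choice of the annulus $A_\delta$, which is built to cancel the leading-order contribution.
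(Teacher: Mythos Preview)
Your argument is correct and runs parallel to the paper's, with one genuinely different ingredient. Both proofs use the convexity sandwich
\[
\int_{B_1} j'(u_{A_\delta})(u_{B_*}-u_{A_\delta})\;\le\;\J(B_*)-\J(A_\delta)\;\le\;\int_{B_1} j'(u_{B_*})(u_{B_*}-u_{A_\delta}),
\]
integrate by parts via the adjoint state $\psi=w_{B_*}$, and finish with a Taylor expansion near $r_*$. The difference is in the \emph{lower} bound: the paper shows the adjoint states $w_{A_\delta}\to w_{B_*}$ in $C^{1,\beta}$ (via elliptic regularity) and then repeats the Taylor argument for $w_{A_\delta}$; you instead establish $\|u_{B_*}-u_{A_\delta}\|_\infty=O(\delta^2)$ directly from the linear decomposition of \Cref{prop: f=s}, so that the gap between the two convexity bounds is $o(\delta^2)$ by uniform continuity of $j'$, and only one integral needs to be computed. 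Your route is a bit more elementary and yields the leading constant explicitly; the paper's route avoids explicit formulas for $u_{B_r}$.

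Two small technical corrections that do not affect the outcome. First, ``$r\mapsto u_{B_r}(x)$ is smooth'' is a slight overstatement: the second $r$-derivative has a bounded jump (of size $1$) at $r=|x|$, so the map is only $C^{1,1}$. This is still enough: a $C^{1,1}$ Taylor expansion, together with $r_1+r_2-2r_*=O(\delta^2)$, gives $\|u_{B_*}-u_{A_\delta}\|_\infty=O(\delta^2)$ as you claim, with constants uniform in $x$ since $r$ stays near $r_*>0$. Second, the remainder ``$+O(\delta^3)$'' in the expansion of $H$ requires $H\in C^3$, i.e.\ $\psi\in C^2$, which is not guaranteed by $j\in C^1$ alone; with only $j'\in C^0$ you get $\psi\in C^{1,\alpha}$ and hence a remainder $O(\delta^{2+\alpha})=o(\delta^2)$, which is all that is needed.
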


For the proof of \Cref{prop: sharp} and in several other places in this paper, we need to define the notion of adjoint state (see also \cite[Section 5.8]{HP18}):

\begin{defi}[Adjoint State]
\label{defi: adjoint}
    Let $j\in C^1(\R_+)$. For every $V\in L^2(B_1)$ nonnegative, we define the \emph{adjoint state} $w_V$ of $u_V$ as the unique function solving in the weak sense the \emph{adjoint problem}
    \[
        \begin{dcases}
            -\Delta w_V=j'(u_V) &\text{in }{B_1},\\[5 pt]
            w=0 &\text{on }\partial{B_1},
        \end{dcases}
    \]
    namely,
    \begin{equation}
    \label{eq: adjoint}
        \int_{{B_1}} \nabla w_V\cdot\nabla \varphi \, dx= \int_{B_1} j'(u_V)\varphi \qquad\qquad \forall\varphi\in H^1_0({B_1}).
    \end{equation}
    When $V=\chi_E$ we will write $w_E:=w_{\chi_E}$.
\end{defi}

\begin{proof}[Proof of \Cref{prop: sharp}]
    For every $\de \in [\,0,\min\{2m,\,2|B_1|-2m\}\,)$, let us define $u_\de=u_{A_\de}$, $u_0=u_{B_*}$, and $w_\de =w_{A_\de}$, $w_0=w_{B_*}$. Moreover, since these functions are all radial, we will identify, with a slight abuse of notation
    \[
    u_{\de}(x)=u_{\de}(\abs{x}), \qquad \qquad w_{\de}(x)=w_{\de}(\abs{x}).
    \]
    By convexity of $j$, we have
    \[
      \int_{B_1}j'(u_\de)(u_0-u_\de)\,dx\le \J(B_*)-\J(A_\de)\le\int_{B_1}j'(u_0)(u_0-u_\de)\,dx.
    \]
    After two integration by parts, we may rewrite
    \begin{equation}
      \label{eq: upperAndLowerJADelta}
      \int_{B_1}w_\de (\chi_{B_*}-\chi_{A_\de})\,dx\le \J(B_*)-\J(A_\de)\le\int_{B_1}w_0 (\chi_{B_*}-\chi_{A_\de})\,dx.
    \end{equation}
  Noticing that $\chi_{B_*}-\chi_{A_\de}$ is non-zero only between the radii $r_1(\de)$ and $r_2(\de)$, we focus our attention on the values of $w_\de$ and $w_0$ near $\abs{x}=r_*$, where we recall that $B_*=B_{r_*}$.
  \begin{proofItemize}
	
  \medskip
  \item \textbf{Estimates for $w_0$:}  
    by Taylor expansion,
      \begin{equation}
        \label{eq: taylorw0}
            w_{0}(r)=w_{0}(r_*)+\pa_r w_{0}(r_*)(r-r_*)+R_1(r),
        \end{equation}
        with 
        \[
          R_1(r) = \int_{r_*}^r \ml(\pa_r w_0(s)-\pa_r w_0(r_*)\mr)\,dr.
          \]
          Since $w_0$ solves the equation $-\De w_0 = j'(u_0)$, then by classical elliptic regularity (\Cref{thm: regularity}) we have that $w_0\in C^{1,\be}$ for some $\be \in (0,1)$, so that
          \[
            \abs{\pa_r w_0(r_*)-\pa_r w_0(s)} \le C\abs{s-r_*}^\be,
            \]
            and 
            \begin{equation}
              \label{eq: remainderw0}
              \abs{R_1(r)}\le C \abs{r-r_*}^{1+\be}.
            \end{equation}
            Therefore, noticing that (see \Cref{defi: annulus} for the definition of $r_1=r_1(\delta), r_2=r_2(\delta)$)
            \[
              \int_{B_1}(\chi_{B_*}-\chi_{A_\de})\,dx = 0 \qquad \qquad \chi_{B_*}(r)-\chi_{A_\de}(r)=\sgn(r_*-r)\chi_{(r_1,r_2)}(r),
              \]
              we obtain by \eqref{eq: taylorw0} and \eqref{eq: remainderw0} 
              \[
                \int_{B_1}w_0(\chi_{B_*}-\chi_{A_\de})\, dx =\pa_r w_0(r_*)|\S^{n-1}|\int_{r_1}^{r_2} r^{n-1}\abs{r-r_*}\,dr + R_2(\de)
                \]
                with
                \[
                  \abs{R_2(\de)} \le C \abs{r_2-r_1}^{2+\be}.
                  \]
                  In particular, since $r_2-r_1=O(\de)$, we have 
                  \begin{equation}
                    \label{eq: upperEstimateJADelta}
                    \int_{B_1}w_0(\chi_{B_*}-\chi_{A_\de})\, dx \le C\de^2 + o(\de^2),
                  \end{equation}
                  so that, with \eqref{eq: upperAndLowerJADelta}, the upper bound in \eqref{eq: sharpnessADelta} is proven for a suitable choice of $\bar{\de}$.
                  
                  \medskip
                  \item \textbf{Estimates for $w_\de$:} we now need to adapt the previous estimate with $w_\delta$ instead of $w_0$: to that end, we first notice that $w_\de$  smoothly converges to $w_0$ for $\de$ that goes to 0. Indeed, since $-\De u_\de = \chi_{A_\de}$, we have from classical elliptic regularity (see \Cref{thm: regularity}) that 
                  \[
                    u_\de \xrightarrow{L^\infty(B_1)} u_0.
                    \]
                    In particular, the continuity of $j'$ ensures that $j'(u_\de)$ converges in $L^\infty$ to $j'(u_0)$ and, for any $\be\in(0,1)$,
                    \[
                      w_\de \xrightarrow{C^{1,\be}(B_1)} w_0.
                      \]
                      This convergence, joint with \eqref{eq: taylorw0} and \eqref{eq: remainderw0} ensures that for some uniform positive constant $C$ we have
                  \begin{equation}
                    \label{eq: taylorwDelta}
                        w_{\de}(r)=w_{\de}(r_*)+\pa_r w_{\de}(r_*)(r-r_*)+R_3(r),
                    \end{equation}
                    with 
                    \begin{equation}
                      \label{eq: remainderWDelta}
          \abs{R_3(r)} \le C \abs{r-r_*}^{1+\be}.
          \end{equation}
          If $\bar{\de}$ is small enough, since $j'\ge 0$  and $j'\not\equiv0$ we know by Hopf's lemma that $w_0$ is strictly radially decreasing and we have
          \begin{equation}
            \label{eq: estimateDerwDelta}
            -\pa_r w_\de(r_*) \ge -\fr{1}{2}\pa_r w_0(r_*)>0.
          \end{equation}
          We now compute similarly to \eqref{eq: upperEstimateJADelta}, using \eqref{eq: taylorwDelta}, \eqref{eq: remainderWDelta}, and \eqref{eq: estimateDerwDelta} we obtain
          \[
            \int_{B_1}w_\de (\chi_{B_*}-\chi_{A_\de})\, dx \ge c \de^2 + o(\de^2),
          \]
          for $c>0$ small enough, thus concluding the proof.
\end{proofItemize}
    \end{proof}
\begin{rem}
We notice that the convexity assumption in \Cref{prop: sharp} is only needed to make the proof simpler. As in \cite[Proof of Formula (32)]{M21} we could have used a parametric derivative approach to obtain the same result without assuming $j$ to be convex.
\end{rem}
In the following we show the exponent 2 is sharp also for $p=\infty$.
\begin{prop}\label{prop: p=infty}
    Let $m\in(0,\abs{B_1})$. Then there exists $\bar{\de}>0$ and a positive constant $C=C(m,n)$ such that for every $\delta\in (0,\bar{\de})$,  
    \begin{equation}
        \label{eq: sharpnessADeltaInfty}
        \fr{1}{C}\de^2\le \norm{u_{B_*}}_\infty-\norm{u_{A_\de}}_\infty\le C\delta^2.
    \end{equation}
\end{prop}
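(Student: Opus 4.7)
The plan is to mimic the proof of Proposition \ref{prop: sharp}, with the adjoint state $w_0$ replaced by the Green function $G(0,\cdot)$ of the Dirichlet Laplacian on $B_1$, which plays the role of ``adjoint'' for the pointwise evaluation functional $u\mapsto u(0)$.

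First, I would reduce the deficit to a single value: since $A_\delta$ and $B_*$ are radially symmetric, $u_{A_\delta}$ and $u_{B_*}$ are radial, and the radial ODE gives $u'(r)=-r^{1-n}\int_0^r t^{n-1}\chi(t)\,dt\le 0$, so both functions are radially non-increasing and attain their $L^\infty$-norm at the origin. Hence, writing $G(0,y)=G(|y|)$ and noting that $\chi_{B_*}-\chi_{A_\delta}$ is supported in $\set{r_1<|y|<r_2}$ where it equals $\sgn(r_*-|y|)$ (with $r_1=r_1(\delta),r_2=r_2(\delta)$ as in Definition \ref{defi: annulus}), we get
\[
\norm{u_{B_*}}_\infty - \norm{u_{A_\delta}}_\infty \;=\; (u_{B_*}-u_{A_\delta})(0) \;=\; \abs{\S^{n-1}}\int_{r_1}^{r_2}\Phi(r)\,\sgn(r_*-r)\,dr,
\]
where $\Phi(r):=r^{n-1}G(r)$ is smooth on $(0,1)$.

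Next, I would Taylor-expand $\Phi$ around $r_*$: the zeroth-order contribution to the integral is $\Phi(r_*)(2r_*-r_1-r_2)$, the first-order one is $-\tfrac{1}{2}\Phi'(r_*)\bigl[(r_*-r_1)^2+(r_2-r_*)^2\bigr]$, and the remainder is of order $\delta^3$. A second-order expansion of the explicit formulas for $r_1(\delta)$ and $r_2(\delta)$ shows that both terms above are of size $\delta^2$, combining into a total of the form $c_{m,n}\delta^2+O(\delta^3)$ with $c_{m,n}$ explicit in terms of $\Phi(r_*)$ and $\Phi'(r_*)$.

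The key remaining point is to check that $c_{m,n}>0$, since the two contributions could a priori cancel. Regrouping one finds $c_{m,n}$ proportional to $\tfrac{(n-1)}{r_*}\Phi(r_*)-\Phi'(r_*)=-r_*^{n-1}G'(r_*)$, and the identity $r^{n-1}G'(r)\equiv -1/\abs{\S^{n-1}}$ on $(0,1)$---obtained by applying the divergence theorem to $-\Delta G=\delta_0$ on the ball of radius $r$---gives $c_{m,n}=1/(4n^2\omega_n^2 r_*^{2n-2})>0$. This yields matching upper and lower bounds of order $\delta^2$ in \eqref{eq: sharpnessADeltaInfty}, for $\bar\delta$ small enough depending on $m$ and $n$.
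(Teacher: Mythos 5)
Your proof is correct, and the strategy is essentially the one the paper uses: represent the $L^\infty$ deficit via the Green's function $G(0,\cdot)$ (this is legitimate since both $u_{B_*}$ and $u_{A_\delta}$ are radially non-increasing, hence maximized at the origin), and then Taylor-expand near $r_*$. The one genuine point of departure is in the bookkeeping of the expansion. The paper expands $G(0,\cdot)$ itself and then invokes the volume constraint $\int(\chi_{B_*}-\chi_{A_\delta})=0$, which makes the zeroth-order Taylor term drop out exactly; positivity of the remaining first-order term follows immediately from the non-degeneracy $|\nabla_y G(0,\cdot)|>0$ on $\partial B_*$ (exactly as in the lower-bound part of Proposition \ref{prop: sharp}). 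You instead absorb the Jacobian and expand the function $r\mapsto r^{n-1}G(r)$, so the zeroth- and first-order contributions are each $O(\delta^2)$ and could in principle cancel; you then verify that the combined constant is proportional to $-r_*^{n-1}G'(r_*)=1/|\S^{n-1}|>0$ via the divergence theorem. Both routes give the same leading constant $m^{2/n-2}/(4n^2\omega_n^{2/n})=1/(4n^2\omega_n^2 r_*^{2n-2})$. Your variant costs a little extra computation (you must resolve a potential cancellation the paper's normalization avoids), but it buys a clean closed form for the constant, which the paper does not record. Two cosmetic caveats: avoid reusing $\Phi$, which the paper reserves for deformation vector fields, and note the remainder argument is cleaner here than in Proposition \ref{prop: sharp} precisely because $G(0,\cdot)$ is $C^\infty$ away from $0$, so the remainder is genuinely $O(\delta^3)$ rather than $O(\delta^{2+\beta})$.
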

\begin{proof}
    Since both $u_{B_*}$ and $u_{A_\de}$ are radially decreasing, if $G$ denotes the Green's function on the ball (see \autoref{ssect: fugledeinfinity} for the definition), we have
    \[
        \norm{u_{B_*}}_\infty-\norm{u_{A_\de}}_\infty = \int_{B_1}^{}G(0,y)(\chi_{B_*}-\chi_{A_\de})\,dy.
    \]
    Using a Taylor expansion of $G(0,\cdot)$, the fact that $\abs{\na_y G(0,\cdot)}>0$ near $\pa B_*$, and using the volume constraint, we obtain the result as in the proof of \Cref{prop: sharp}. 
\end{proof}

\section{Proof of Theorem \ref{thm: maintheorem}}
\label{fugledeImpliesStability}

In this section, we provide the first part of the proof of Theorem \ref{thm: maintheorem} (which implies Theorem \ref{thm: pnorm} in the case $p\in(1,\infty)$ as shown in \Cref{sect:intro}). In fact, in the spirit of the Selection principle used in \cite{CL12} to deduce the sharp quantitative isoperimetric inequality from a stability result by Fuglede (\cite{F89}), we show that \Cref{thm: maintheorem} is a consequence of the following \Cref{thm: fuglede}, asserting stability among smooth deformations of $B_*$. In what follows, for every set $E\subset B_1$ and every vector field $\Phi: B_1 \to \R^n$, we use the notation
\[
    E^\Phi = (\id +\Phi)(E).
\]

\begin{thm}
    \label{thm: fuglede}
    Let $m\in(0,\abs{B_1})$ and $j\in C^1(\R_+)\cap C^2((0,\infty))$ be such that $j'(0)\geq 0$, $j''(s)>0$ for every $s>0$ and
    \begin{equation}
    \label{eq: j''summablebis}
          \red{\lim_{s\to0^+}s^\al j''(s)\in (0,+\infty)}
    \end{equation}
    \red{for some $\al\in(-\infty,1)$}. Then there exist positive constants $c=c(j,m,n)$, $\eta=\eta(j,m,n)$ such that for every $\Phi\in W^{2,q}({B_1},\R^n)$ orthogonal to $\pa B_*$ with
    \[
        \norm{\Phi}_{W^{2,q}}\le \eta, \qquad \qquad \abs{B_*^\Phi}=\abs{B_*}=m,
    \]
    we have
    \[
         \J(B_*)-\J(B_*^{\Phi})\ge c \abs{B_*\De B_*^{\Phi}}^2.
    \]
\end{thm}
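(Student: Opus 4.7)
The plan is to follow a standard Fuglede-type strategy: do a Taylor expansion of the map $t\mapsto\J(B_*^{t\Phi})$ around $t=0$, and control each term using the maximality of $B_*$ established in \Cref{prop: exist}. The first step, carried out in \Cref{ssect:shapederivatives}, is to compute the first and second shape derivatives of $\J$ at $B_*$. Using the adjoint state $w_V$ introduced in \Cref{defi: adjoint}, the first order shape derivative along a vector field $\Phi$ is of the form
\[
\J'(B_*)[\Phi]=\int_{\partial B_*} w_{B_*}\,(\Phi\cdot\nu)\,d\Hn.
\]
Since $B_*$ is a centered ball, $w_{B_*}$ is radial and hence constant on $\partial B_*$. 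Combined with the volume constraint $|B_*^\Phi|=|B_*|=m$, which gives $\int_{\partial B_*}(\Phi\cdot\nu)\,d\Hn=o(\|\Phi\|)$, this shows the first order term is negligible.

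Therefore the deficit $\J(B_*)-\J(B_*^\Phi)$ is governed by the second order shape derivative. The heart of the argument, developed in \Cref{ssect:coercivity}, is to establish a coercivity estimate of the form
\[
-\J''(B_*)[\Phi,\Phi]\ge c\,\|\Phi\cdot\nu\|_{L^2(\partial B_*)}^2
\]
for all $\Phi$ orthogonal to $\partial B_*$ with $\int_{\partial B_*}(\Phi\cdot\nu)=0$. Since the domain is a ball, one can decompose $\Phi\cdot\nu$ in spherical harmonics and diagonalize the Dirichlet-to-Neumann operators appearing in $\J''(B_*)$; the optimality of $B_*$ guarantees non-negativity on each mode, and the strict convexity assumption $j''>0$ on $(0,\infty)$ together with the volume constraint (eliminating the zeroth harmonic) is what yields strict positivity. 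Here the integrability hypothesis \eqref{eq: j''summablebis} is needed to make sense of the boundary integrals involving $j''(u_{B_*})$.

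Once coercivity is in hand, \Cref{ssect:continuity} provides a continuity estimate on the second shape derivative showing $|\J''(B_*^{t\Phi})[\Phi,\Phi]-\J''(B_*)[\Phi,\Phi]|=o(\|\Phi\|_{W^{2,q}}^0)\cdot \|\Phi\cdot\nu\|_{L^2(\partial B_*)}^2$ as $\eta\to 0$, so that a second order Taylor expansion yields
\[
\J(B_*)-\J(B_*^\Phi)=-\tfrac{1}{2}\J''(B_*)[\Phi,\Phi]+o\bigl(\|\Phi\cdot\nu\|_{L^2(\partial B_*)}^2\bigr)\ge \tfrac{c}{2}\|\Phi\cdot\nu\|_{L^2(\partial B_*)}^2
\]
for $\|\Phi\|_{W^{2,q}}\le\eta$ small enough. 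To close the argument, one relates the symmetric difference to this boundary norm: the area formula together with the $W^{2,q}\hookrightarrow C^1$ embedding gives, for small $\Phi$ orthogonal to $\partial B_*$,
\[
|B_*\De B_*^\Phi|=\int_{\partial B_*}|\Phi\cdot\nu|\,d\Hn+O(\|\Phi\|^2)\le C\,\|\Phi\cdot\nu\|_{L^2(\partial B_*)}
\]
by Cauchy--Schwarz, so that $|B_*\De B_*^\Phi|^2\le C\,\|\Phi\cdot\nu\|_{L^2(\partial B_*)}^2$ and the claimed inequality follows.

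The main obstacle is the coercivity step: computing $\J''(B_*)$ explicitly requires differentiating twice a functional depending on the domain through a PDE, and one must then carry out a spherical-harmonic analysis of the resulting quadratic form, separating the boundary integral of $(\Phi\cdot\nu)^2$ from the nonlocal term coming from the shape derivative of $u_V$ in the adjoint identity. The regularity assumption \eqref{eq: j''summablebis} is what allows $j''(u_{B_*})$ to be integrated near $\partial B_*$ (where $u_{B_*}$ vanishes), and the Hölder continuity noted in \Cref{rem: holder} plays a role both in this spectral analysis and in the continuity estimate of \Cref{ssect:continuity}.
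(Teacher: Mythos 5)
Your high-level plan — first derivative, second derivative coercivity, improved continuity, Taylor expansion, then bound $|B_*\Delta B_*^\Phi|$ by $\|\Phi\cdot\nu\|_{L^2(\partial B_*)}$ — matches the structure of the paper's proof, and the role you assign to \eqref{eq: j''summablebis} and to the spherical-harmonic decomposition is accurate. But there is a genuine gap in how you handle the first-order term, and it is exactly the gap that the paper's Lagrangian $\Ll_\tau$ is there to close.

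You write that the volume constraint gives $\int_{\partial B_*}(\Phi\cdot\nu)\,d\Hn = o(\|\Phi\|)$, so the first-order term $\J'(B_*)[\Phi]=w_{B_*}|_{\partial B_*}\int_{\partial B_*}(\Phi\cdot\nu)\,d\Hn$ is negligible, and then expand $\J(B_*)-\J(B_*^\Phi)=-\tfrac12\J''(B_*)[\Phi,\Phi]+o(\|\Phi\cdot\nu\|_{L^2}^2)$. This is not correct as stated: the constraint $|B_*^\Phi|=|B_*|$ only forces $\int_{\partial B_*}(\Phi\cdot\nu)\,d\Hn = O(\|\Phi\|^2)$ (see the volume expansion in the proof of \Cref{prop: eigentostab}), which is the \emph{same} order as the second-order term, not smaller. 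So the first derivative contributes at leading order and cannot be absorbed into the $o(\cdot)$. Relatedly, you state the coercivity for $\Phi$ with $\int_{\partial B_*}(\Phi\cdot\nu)=0$ exactly; the actual constraint only gives this integral small of size $O(\|\Phi\|^2)$, and \Cref{prop: eigentostab} has to account for that ($\al_0\neq 0$). The paper resolves both issues at once by replacing $\J$ with the Lagrangian $\Ll_\tau(E)=\J(E)+\tau|E|$ with $\tau=-w_0|_{\partial B_*}$: then $\Ll_\tau'(B_*)\equiv 0$ \emph{identically} (\Cref{cor: lagrangian}(i)), the Taylor expansion of $L(t)=\Ll_\tau(B_*^{t\Phi})$ genuinely starts at second order, and since $|B_*^\Phi|=|B_*|$ one has $\Ll_\tau(B_*)-\Ll_\tau(B_*^\Phi)=\J(B_*)-\J(B_*^\Phi)$, so the quantity being bounded is unchanged. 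Without this device, you would have to combine the Taylor expansions of $\J$ and of the volume $V(t)$ evaluated at \emph{different} intermediate points $t_0,t_1$, and the cancellation does not happen cleanly. So the Lagrangian is not a cosmetic choice but a necessary ingredient, and your outline should introduce it (or some equivalent mechanism) before claiming the second-order expansion.
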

\red{Precisely, we show in this section} how one can deduce \Cref{thm: maintheorem} from \Cref{thm: fuglede} in 2 subsequent steps\red{:
\begin{enumerate}
\item using the results from subsection \ref{ssect: densities} we first prove that it is sufficient to show stability in an $L^1$-neighborhood of $B_*$,
\item then we use a {\it selection principle}: assuming that the stability inequality is false for a sequence of sets $E_\delta$, then we replace those sets by smoother sets $\widetilde{E_\delta}$ for which the stability inequality will also be false, then contradicting Theorem \ref{thm: fuglede}. More precisely, the sets $\widetilde{E_\delta}$ will be chosen as level sets of the adjoint state, following ideas from the works \cite{M21,MRB22,CMP23}.
\end{enumerate}
}

The proof of Theorem \ref{thm: fuglede} is postponed to \Cref{sect:fuglede}. 
Let us stress that the orthogonality assumption in \Cref{thm: fuglede} can be removed, since every deformation $\Phi$ can be replaced with a deformation $\Psi$ normal to $\pa B_*$ in a way that $B_*^\Phi=B_*^\Psi$ and $\norm{\Psi}_{{W^{2,q}}}$ is equivalent to $\norm{\Phi}_{{W^{2,q}}}$ (see for instance \cite[Lemma 5.9.5]{HP18}).

\subsection{Step 1: local stability implies global stability}
In this first step, we show that it is enough to prove \Cref{thm: maintheorem} in the regime $\|V-\chi_{B_*}\|_1\to 0$: for $m\in(0,|B_1|)$ given  and $\delta$ small enough, we recall
\[
\mathcal \M_m^\de:=\Set{V \in L^\infty(B_1) | \begin{gathered}
	0\le V\le 1, \\ \int_{B_1} V=m, \\ \norm{V-\chi_{B_*}}_{1}=\delta 
\end{gathered}}.
\]
\begin{prop}
\label{prop: smalldelta}
   Let $j\in C^0(\R_+)$ be strictly convex and strictly increasing. If
    \begin{equation}
        \label{eq: smallasymm}
        \liminf_{\delta\to 0}\inf_{V\in\M_m^\delta}\frac{\J(B_*)-\J(V)}{\delta^2}>0,
    \end{equation}
    then \Cref{thm: maintheorem} holds true.
\end{prop}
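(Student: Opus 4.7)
The plan is to upgrade the local stability encoded in \eqref{eq: smallasymm} to a global stability inequality by means of a compactness argument. Set $D:=\min\{2m,2(|B_1|-m)\}$, so that for every $V\in\mathcal{M}_m$ one has $\|V-\chi_{B_*}\|_1\in[0,D)$. By hypothesis there exist $\delta_0\in(0,D)$ and $c_0>0$ such that
\[
\J(B_*)-\J(V)\ge c_0\|V-\chi_{B_*}\|_1^2
\]
for every $V\in\mathcal{M}_m$ with $\|V-\chi_{B_*}\|_1<\delta_0$. It then remains only to prove that the deficit admits a uniform positive lower bound on the set $\{V\in\mathcal{M}_m : \|V-\chi_{B_*}\|_1\ge\delta_0\}$; any such bound may be converted to a quadratic bound by dividing by the trivial upper bound $D^2$.

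For the large-asymmetry regime, I would set
\[
\mu:=\inf\Set{\J(B_*)-\J(V) | V\in\mathcal{M}_m,\ \|V-\chi_{B_*}\|_1\ge\delta_0}
\]
and show $\mu>0$ by the direct method. Pick a minimizing sequence $(V_k)$; the weak-$*$ compactness of $\mathcal{M}_m$ given by \Cref{lem: step1comp} furnishes $V_\infty\in\mathcal{M}_m$ with $V_k\stackrel{*}{\rightharpoonup}V_\infty$ along a subsequence, and the continuity of $\J$ for the weak-$*$ $L^\infty$ topology established inside the proof of \Cref{prop: exist} yields $\J(V_k)\to\J(V_\infty)$. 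Under the strict convexity and strict monotonicity of $j$, \Cref{prop: exist}(2) states that $\chi_{B_*}$ is the unique maximizer of $\J$ on $\mathcal{M}_m$, so $\mu=\J(B_*)-\J(V_\infty)>0$ as soon as one knows $V_\infty\ne\chi_{B_*}$ almost everywhere.

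The one subtle point, and really the only obstacle, will be passing the constraint $\|V_k-\chi_{B_*}\|_1\ge\delta_0$ to the weak-$*$ limit, since the $L^1$-norm is not weakly-$*$ continuous in general. The rescue is \Cref{lem: weakstarcomp} applied to $h_k:=V_k-\chi_{B_*}$: because every element of $\mathcal{M}_m$ takes values in $[0,1]$, one has $h_k\le 0$ on $B_*$ and $h_k\ge 0$ on $B_1\setminus B_*$ for every $k$, which is precisely the sign hypothesis of the lemma. Hence $\|h_k\|_1\to\|V_\infty-\chi_{B_*}\|_1$, so $\|V_\infty-\chi_{B_*}\|_1\ge\delta_0>0$ and in particular $V_\infty\ne\chi_{B_*}$. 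Setting $c:=\min\{c_0,\mu/D^2\}$ and combining the two regimes then delivers the global inequality claimed in \Cref{thm: maintheorem}.
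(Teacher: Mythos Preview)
Your proof is correct and follows essentially the same approach as the paper: both arguments rely on weak-$*$ compactness of $\mathcal{M}_m$ (\Cref{lem: step1comp}), weak-$*$ continuity of $\J$ (proof of \Cref{prop: exist}), the sign-structured $L^1$-convergence of \Cref{lem: weakstarcomp}, and the uniqueness of $B_*$ from \Cref{prop: exist}. The only cosmetic difference is that the paper minimizes the quotient $G(V)=(\J(B_*)-\J(V))/\|V-\chi_{B_*}\|_1^2$ directly over $\mathcal{M}_m\setminus\{\chi_{B_*}\}$ and splits into the cases $V_\infty=\chi_{B_*}$ and $V_\infty\neq\chi_{B_*}$, whereas you split the domain into small and large asymmetry regimes; these are equivalent reformulations. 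One minor imprecision: the range of $\|V-\chi_{B_*}\|_1$ is $[0,D]$, not $[0,D)$ (equality is attained, e.g., by $\chi_E$ with $E\subset B_1\setminus B_*$ when $m\le |B_1|/2$), but this does not affect your argument since you only use $D^2$ as an upper bound.
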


\begin{proof}
    We denote 
    \[
    c=\liminf_{\delta\to 0}\inf_{V\in\M_m^\delta}\frac{\J(B_*)-\J(V)}{\delta^2}
    \]
    that is assumed to be positive, and we let $V_k$ be a minimizing sequence for the problem
    \[
        \inf_{\substack{V\in \M_m \\ V\neq \chi_{B_*}}} G(V):=\frac{\J(B_*)-\J(V)}{\|V-\chi_{B_*}\|_1^2}.
    \]
    By compactness of $\M_m$ (see \Cref{lem: step1comp}) we may assume that $V_k$ weakly-$*$ $L^\infty$ converges to some function $V_\infty\in \M_m$. 
    
    First case: if $V_\infty=\chi_{B_*}$, then by weak-* convergence and the sign constraints on $V_k-\chi_{B_*}$ we can apply \Cref{lem: weakstarcomp} and  we get:
    \[
        \lim_k\norm{V_k-\chi_{B_*}}_1=0,
    \]
    and so by definition of $c$,
    $$\lim_kG(V_k)\geq c>0.$$

    Second case: if $V_\infty\neq \chi_{B_*}$ then again by \Cref{lem: weakstarcomp} $\lim_k\|V_k-\chi_{B_*}\|_1=\|V_\infty-\chi_{B_*}\|_1$ and by continuity of $\J$ (see \Cref{lem: step1comp}),
    \[
        \lim_kG(V_k)=G(V_\infty).
    \]
    But by the uniqueness stated in Proposition \ref{prop: exist}, $G(V_\infty)>0$, hence the result.
\end{proof}

   \subsection{Step {2}: \Cref{thm: fuglede} implies \Cref{thm: maintheorem}}\label{ssect: mainproof}

To conclude the proof of \Cref{thm: maintheorem}, we will use the following result from \cite{MRB22}: 
\begin{thm}[Quantitative bathtub principle]
\label{eq: quantbathtub}
    Let $\Om\subseteq\R^n$ be an open bounded set,  $m\in(0,\abs{\Om})$, and $u\in C^{1,\al}(\Om)$ for some $\al\in(0,1)$. We assume 
    \[\abs{\Set{u>t_*}}=\abs{\Set{u\geq t_*}}=m\qquad
    \textrm{ and }
\quad \min_{\partial\{u>t_*\}}|\nabla u|>0\]
    for some unique $t_*\in\R$. Then there exists a positive constant $c=c(\norm{u}_{C^{1,\al}})$ such that for every $V\in L^1(\Om)$ non-negative with
    $\int_\Om V\,dx=m$,
    \[
        \int_\Om uV\le \int_\Om u\chi_{\{u>t_*\}}-c\left(\min_{{\partial\{u>t_*\}}}\abs{\na u}\right)\fr{{\Hn(\partial B_*)}}{\Hn({\partial \{u>t_*\}})}\norm{V-\chi_{\{u>t_*\}}}_1^2,
    \]
    where $B_*$ is the ball of volume $m$.
\end{thm}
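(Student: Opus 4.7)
The plan is to reduce the quantitative bathtub inequality to a \emph{reverse} bathtub estimate on sublevel sets of $u$, and then to quantify that estimate via the coarea formula and the $C^{1,\al}$-regularity. Set $E := \{u > t_*\}$, $W := V - \chi_E$, $P := \Hn(\pa E)$, and $g_{\min} := \min_{\pa E}\abs{\na u}$. The mass constraint gives $\int_\Om W = 0$, so after subtracting the null integral $t_*\int_\Om W$,
\[
\int_\Om u\chi_E\,dx - \int_\Om u V\,dx \;=\; -\int_\Om (u-t_*)\,W\,dx.
\]
Assuming additionally $0 \le V \le 1$ (this hypothesis is implicit, since otherwise even the non-quantitative bathtub inequality already fails), on $E$ one has $u-t_*>0$ and $W = V-1\le 0$, while on $E^c$, $u-t_*\le 0$ and $W=V\ge 0$. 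Hence the integrand is pointwise $\le 0$ and
\[
\int_\Om u\chi_E\,dx - \int_\Om uV\,dx \;=\; \int_E(u-t_*)(1-V)\,dx + \int_{E^c}(t_*-u)V\,dx.
\]
Setting $\mu := \int_E(1-V) = \int_{E^c}V = \tfrac12\norm{W}_1$, it suffices to lower bound each of the two terms by $c(g_{\min}/P)\mu^2$.

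By symmetry I focus on the first term. A reverse bathtub principle asserts that, over $f:E\to[0,1]$ with $\int_E f = \mu$, the quantity $\int_E(u-t_*)f\,dx$ is minimised by concentrating $f$ on the sublevel set $\{0 < u-t_* < s_\mu\}$, where $s_\mu$ is determined by $\phi(s_\mu) = \mu$ for $\phi(s) := \abs{\{0<u-t_*<s\}\cap E}$ (extremal points of the convex set $\{0\le f\le 1,\ \int f=\mu\}$ are characteristic functions, minimised by taking the smallest-$u$ values). Applying this and the layer-cake identity,
\[
\int_E (u-t_*)(1-V)\,dx \;\ge\; \int_{\{0<u-t_*<s_\mu\}} (u-t_*)\,dx \;=\; \int_0^{s_\mu}\bigl(\mu - \phi(r)\bigr)\,dr.
\]
The coarea formula gives $\phi(s) = \int_0^s \int_{E\cap\{u=t_*+r\}}\abs{\na u}^{-1}\,d\Hn\,dr$, and $\abs{\na u}\ge g_{\min}$ on $\pa E$ combined with the uniform continuity of $\na u$ (from $u\in C^{1,\al}$) produces a tube height $s_0 = s_0(\norm{u}_{C^{1,\al}}, g_{\min})>0$ on which $\abs{\na u}\ge g_{\min}/2$ and $\Hn(\{u=t_*+r\}\cap E)\le 2P$. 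This yields the linear bound $\phi(s) \le 4sP/g_{\min}$ for $s\in[0,s_0]$. Plugging it into the layer-cake integral gives $\int_E(u-t_*)(1-V) \gtrsim \mu^2 g_{\min}/P$ in the regime $\mu \le \phi(s_0)$.

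The main obstacle is the complementary regime $\mu > \phi(s_0)$, where $s_\mu$ exceeds the tube height and the linear bound on $\phi$ is no longer sharp. Here the key observation is that the reverse-bathtub minimum is monotone nondecreasing in $\mu$, so it stays bounded below by the positive constant $\int_0^{s_0}(\phi(s_0)-\phi(r))\,dr$; combined with $\mu\le m$, this gives the required quadratic lower bound with a constant that can be absorbed into $c = c(\norm{u}_{C^{1,\al}})$. Reproducing the argument on $E^c$ and summing yields the inequality with a factor of order $g_{\min}/P$; the extra factor $\Hn(\pa B_*)/\Hn(\pa E) \le 1$ appearing in the statement only weakens this and is therefore admissible (e.g.\ by isoperimetry it can always be absorbed into the constant). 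The delicate point throughout is the quantitative control of $s_0$, and hence of the final constant, purely in terms of $\norm{u}_{C^{1,\al}}$ and $g_{\min}$; this is essentially where the $C^{1,\al}$ hypothesis enters.
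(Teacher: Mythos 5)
This theorem is quoted from \cite{MRB22} and used as a black box to deduce \Cref{thm: maintheorem}; the paper itself does not prove it, so there is no ``paper's own proof'' against which to compare your attempt.

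On the merits of the attempt: you are right that an upper bound $0\le V\le 1$ must be implicit in the statement, since without it even the non-quantitative bathtub inequality already fails (one may concentrate arbitrarily large mass of $V$ near the maximum of $u$). Your decomposition of the deficit into $\int_E (u-t_*)(1-V)+\int_{E^c}(t_*-u)V$, both individually non-negative once $0\le V\le1$ is assumed, is the natural starting point, and reducing each piece to a reverse bathtub estimate on sublevel sets and then quantifying it via coarea is a sound plan; the computation $\int_0^{s_\mu}(\mu-\phi(r))\,dr\ge\mu^2 g_{\min}/(8P)$ does check out once the linear bound $\phi(s)\le 4sP/g_{\min}$ is available for $s\le s_0$.

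The genuine gap is precisely in establishing the tube height $s_0=s_0(\|u\|_{C^{1,\al}},g_{\min})$. H\"older continuity of $\nabla u$ gives a quantitative tube around $\pa E$ on which $|\nabla u|\ge g_{\min}/2$, and on which the perimeters $\Hn(\{u=t_*+r\})$ are comparable to $P$, but it does not by itself prevent the slab $\{t_*<u<t_*+s\}$ from having components away from $\pa E$, e.g.\ around interior critical points of $u$ at values just above $t_*$. If such components exist, $\phi$ can grow faster than linearly near $0$ and the bound $\phi(s)\le 4sP/g_{\min}$ fails; so one either needs an additional hypothesis ruling out near-threshold critical values, or a finer argument that bounds the contribution of those components. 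The same concern resurfaces in the $\mu>\phi(s_0)$ regime, where asserting that $\int_0^{s_0}(\phi(s_0)-\phi(r))\,dr$ is bounded below by $c\,g_{\min}\Hn(\pa B_*)/\Hn(\pa E)\cdot m^2$ with $c=c(\|u\|_{C^{1,\al}})$ is not justified from the stated data alone. In the paper's application these points are moot (the relevant $u$ is a smooth adjoint state close to the radial $w_0$, with a single interior critical point), but for the theorem as stated in its generality one should consult \cite{MRB22} for how these issues are resolved.
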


    \begin{proof}[Proof of \Cref{thm: maintheorem} from Theorem \ref{thm: fuglede}]        
    
        By \Cref{prop: exist} we have that for every $\de>0$ small there exists a set $E_\delta$ solution to the problem
        \[
            \J(E_\de)=\max_{V\in\M_m^\de}\J(V).
        \]
        Thanks to \Cref{prop: smalldelta} it is sufficient to prove that
        \[
            \li_{\de\to 0}\fr{\J(B_*)-\J(E_\de)}{\de^2}>0.
        \]
        Let us assume by contradiction, up to extracting a subsequence, that
        \[
            \lim_{\de\to 0}\fr{\J(B_*)-\J(E_\de)}{\de^2}=0.
        \]
        For every $\de$ we define $u_\de:=u_{E_\de}$, and $w_\de:=w_{E_\de}$ the adjoint states (see \Cref{defi: adjoint}).
        \begin{proofItemize}
            \item We first claim that there exists $t_\delta$ such that
        \[
            \widetilde{E}_\de:=\{w_\de>t_\de\}
        \]
        is of volume $m$. To that end we prove the function $t\in(0,\infty)\mapsto |\{w_\de>t\}|$ is continuous, which is the same as proving that $|\{w_\de=t\}|=0$ for every $t>0$. Let $t>0$: we argue similarly to \cite[page 7]{ChanGriesImKurOhn2000}. Because $w_\delta$ is in $H^2(B_1)$, we have $\Delta w_\de=0$ a.e. in $\{w_\de=t\}$, and therefore $j'(u_\delta)=0$ a.e. in $\{w_\de=t\}$. By assumption $j'(s)>0$ for every $s>0$ ($j'$ is strictly increasing as $j''>0$ in $(0,\infty)$), and since $\{w_\de=t\}\subset B_1$ and $u_\de>0$ in $B_1$, we obtain $|\{w_\de=t\}|=0$. This shows the existence of $t_\de$, and also that $|\{w_\de>t_\de\}|=|\{w_\de\ge t_\de\}|$. Finally, we show that $t_\de$ is unique: let $\widehat{t}$ be such that $|\{w>\widehat{t}\}|$ is also equal to $m$, and assume for example $t_\de\leq \widehat{t}$ (the other case being similar). Then the set $\{t_\de<w<\widehat{t}\}$ is open with zero measure, so it is empty. But as $m\in (0,|B_1|)$, $\min w_\de<t_\de\leq \widehat{t}<\max w_\de$, so by connectedness of $B_1$ and continuity of $w_\de$, the set $\{t_\de<w<\widehat{t}\}$ can be empty only if $t_\de=\widehat{t}$.

       \item Second, we show that $\widetilde{E}_\de$ is a $W^{2,q}$ deformation of $B_*$. First, by classical elliptic estimates (\Cref{thm: regularity}) $u_\de$ converges to $u_0$ in $W^{2,q}(B_1)$ for every $q<\infty$. As $j$ is locally $\be:=(1-\al)$-H\"older continuous (see \autoref{rem: holder}) this implies that $w_\de$ converges to $w_0$ in $W^{2,q}$ for every $q<\infty$. On the other hand, $j'(u_0)$ is $C^1$ far from $\pa B_1$, so that $w_0\in W^{3,q}(B_{1-\eps};\R^n)$ for every small $\eps$.
       Moreover, since $0<m<\abs{B_1}$, there exists a positive constant $c$ such that $\abs{\na w_0}\ge c$ on $\pa B_*$. 
       
       Therefore, we are in position to apply \Cref{lem: constmeaslevel} to $w_\de$ and \Cref{lem: convdef} to $w_\de-t_\de$, so that the following holds: for every fixed $\eps>0$ and for $\de<\de_0(\eps)$ we can find deformations $\Phi_\de\in W^{2,q}(B_1;\R^n)$ such that $\norm{\Phi_\de}_{W^{2,q}}\le \eps$, $\Phi_\de$ are orthogonal to $\pa B_*$, and
        \[
            \widetilde{E}_\de=B_*^{\Phi_\de}.
        \] 
       \item Third, by the convergence of $w_\delta$ and the regularity of $\Phi_\delta$, we find constants $c,C>0$ such that for $\delta$ small enough,
        \begin{equation}
        \label{eq: gradandperbound}
            \abs{\na w_\de}\ge c \qquad \textrm{ on } \partial B_*\quad\textrm{ and }\quad            \Hn(\{w_\de=t_\de\})=\int_{\pa B_*}\jac^{\pa B_*}(\id+\Phi_\de)\,d\Hn\le C.
        \end{equation}
       \end{proofItemize}
       We now compute, using the convexity of $j$,
\begin{equation*}
        \begin{split}
            \J(E_\de)&=\int_{B_1} j(u_\de)\,dx \le \J(\widetilde{E}_\de) - \int_{B_1}j'(u_\de)(\widetilde{u}_\de - u_\de),
        \end{split}
        \end{equation*}
       where $\widetilde{u}_\de = u_{\widetilde{E}_\de}$. We integrate by parts two times using $j'(u_\de)=-\De w_\de$, so that 
       \begin{equation*}
            \J(E_\de)\le \J(\widetilde{E}_\de) - \int_{B_1} w_\de(\chi_{\widetilde{E}_\de}-\chi_{E_\de}).
        \end{equation*}
        We now notice that thanks to the uniform estimates \eqref{eq: gradandperbound} and the quantitative bathtub principle (\Cref{eq: quantbathtub}) we get the existence of a uniform constant $c$ such that
        \begin{equation}
        \label{eq: EtotildeE}
            \J(\widetilde{E}_\de)-\J(E_\de)\ge c\abs{\widetilde{E}_\de\De E_\de}^2.
        \end{equation}
       
       On the other hand, since $\norm{\Phi}_{W^{2,q}}<\eps$, we can apply \Cref{thm: fuglede} when $\eps$ is small enough, which gives
        \begin{equation}
        \label{eq: tildeEtoBstar}
            \J(B_*)-\J(\widetilde{E}_\de)\ge c\abs{\widetilde{E}_\de\De B_*}^2,
        \end{equation}
        also for a positive constant $c$. Finally, noticing that 
        \[
            \de=\abs{E_\de\De B_*}\le \abs{E_\de\De\widetilde{E}_\de}+\abs{\widetilde{E}_\de\De B_*},
        \]
        we join \eqref{eq: EtotildeE} and \eqref{eq: tildeEtoBstar}, and we get the existence of a constant $c$ such that
        \[
            c\de^2\le 2c\ml(\abs{E_\de\De\widetilde{E}_\de}^2+\abs{\widetilde{E}_\de\De B_*}^2\mr) \\[5 pt] 
            \le \J(B_*)-\J(E_\de),
        \]
        which is a contradiction and concludes the proof.
    \end{proof}

\section{Proof of \Cref{thm: fuglede}}\label{sect:fuglede}

So far, we have proved \Cref{thm: pnorm} in the special case $p=1$, and we have reduced the proof of \Cref{thm: maintheorem} (which contains \Cref{thm: pnorm} for $p\in(1,\infty)$) to the proof of \Cref{thm: fuglede} asserting stability of $B_*$ among smooth deformations. This section is dedicated to the proof of this last result, which is based on a shape derivative approach: \red{
the idea is to consider smooth deformations $B_*^\Phi=(\id+\Phi)(B_*)$ (with same volume as $B_*$) and to use a Taylor expansion 
$$\J(B_*^\Phi)=\J(B_*)+\J'(B_*)[\Phi]+\frac{1}{2}\J''(B_*)[\Phi,\Phi]+R(\Phi)$$
where one expects $\J'(B_*)[\Phi]=0$, $\J''(B_*)[\Phi,\Phi]$ is strictly negative in a suitable sense (we refer to this as coercivity), and $R(\Phi)$ is small compare to $\J''(B_*)[\Phi,\Phi]$. This strategy is precisely described in \cite{DL19} for example. This section is therefore }divided in 4 paragraphs: 
\begin{enumerate}
\item first, the computation of first and second order shape derivatives, that leads to the expression of the Lagrangian for problem \eqref{eq:maxJ}, \red{taking into account the volume constraint,}
\item then the proof of coercivity for the second order derivative of this Lagrangian,
\item as a classical third step, we then need to prove an improved continuity property of the second order derivative\red{, allowing to control the remainder term in the Taylor expansion, which does not come ``for free'' as the regularity property in the variable $\Phi$ is obtained in a norm that is stronger than the coercivity norm from the previous step,}
\item and finally we combine all of these ingredients to conclude the proof.
\end{enumerate}

\subsection{Computation of shape derivatives}\label{ssect:shapederivatives}
In this section we compute the shape derivatives of the shape functional 
\[
E\longmapsto \J(E)=\int_{B_1}j(u_E).
\]

We start by recalling some basic facts about shape derivatives.
    \begin{defi}[Shape derivative]
    \label{defi: shapeder}
        Let $s\in(0,1)$, let $\mathcal{O}$ be a family of $C^{1,s}$ sets compactly supported in $B_1$, and let
        \[
            \mathcal{F}: \mathcal{O}\to X
        \]
        with $X$ a Banach space. Let $q>n$ large enough to have $W^{2,q}(B_1;\R^n)\hookrightarrow C^{1,s}(B_1;\R^n)$. For every set $E\in\mathcal{O}$, we say that $\F$ is shape differentiable at first or second order if the functional
        \begin{equation}\label{eq: shapeder}
        \Phi\in W^{2,q}(B_1;\R^n)\longmapsto \mathcal{F}((\id+\Phi)(E))\in 
        X. 
        \end{equation} admits Fr\'echet derivatives (of first or second order) at 0, and in this case, we 
        define the \emph{shape derivatives} (denoted $\mathcal{F}'(E)[\Phi]$ and $\mathcal{F}''(E)[\Phi,\Psi]$) as the Fr\'echet derivatives at 0 of \eqref{eq: shapeder}.
\end{defi}

We will follow the classical approach (see for instance \cite[Theorem 5.3.2]{HP18}) to prove the existence and compute the shape derivative of $u_E$, $w_E$ and finally $\J(E)$, in the next three results  respectively. In the whole section, $E$ will denote  a set of class $C^{1,s}$ such that $\overline{E}\subset B_1$, and $p$ large enough so that $W^{1,q}(B_1)\subset C^{0,s}(\overline{B_1})$. Then for every $\Phi\in W^{2,q}(B_1;\R^n)$
\[
E^\Phi:=(\id+\Phi)(E)
\]
is also of class $C^{1,s}$, and $W^{1,q}(B_1,\R^n)$ is an algebra. 

We then define the functions
\[
\begin{aligned}
&u_\Phi:=u_{E^\Phi}  &&w_\Phi:=w_{E^\Phi} \\
&\hu_\Phi:=u_{E^{\Phi}}\circ (\id+\Phi) \qquad \qquad &&\hw_\Phi:=w_{E^{\Phi}}\circ (\id+\Phi),
\end{aligned}
\]
where $w_E$ is the adjoint state defined in \Cref{defi: adjoint}.
Let us recall the classical Hadamard formula that can be found for instance in \cite[Corollary 5.2.8]{HP18}. 
\begin{lemma}[Hadamard formula]
\label{lem: distrderchar}
    Let $T>0$ and $F\in C^1([0,T);W^{1,\infty}(\R^n;\R^n))$ and we denote $F_t(x)=F(t,x)$. We assume $F_t$ invertible for every $t\in [0,T)$.  Then for every function $f\in C^1([0,T); L^1(\R^n))\cap C^0([0,T); W^{1,1}(\R^n))$
    \begin{equation}
    \label{eq: Hadamard1}
        \frac{d}{dt}\left(\int_{F_t(E)}f(t,x)dx\right)=\int_{F_t(E)}\ml(\pa_t f(t,x)+\divv(f(t,x)\,{V_t}(x))\mr)\,dx,
    \end{equation}
    where ${V_t}(x)=\pa_t F_t(F_t^{-1}(x))$.
\end{lemma}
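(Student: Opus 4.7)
The plan is the classical one for Hadamard-type formulas: transport the moving integral back to the fixed reference domain $E$ via the change of variables $x=F_t(y)$, differentiate pointwise in $t$ on this fixed domain using the chain rule and Jacobi's identity for the determinant, then transport the result back to $F_t(E)$.

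Concretely, first I would write, using that $F_t$ is invertible and $W^{1,\infty}$ (hence bilipschitz with positive Jacobian, at least up to replacing $J_t$ by $|J_t|$ which is constant in sign on each connected component for $t$ close to $0$):
$$I(t):=\int_{F_t(E)} f(t,x)\,dx=\int_E f(t,F_t(y))\,J_t(y)\,dy,\qquad J_t(y):=\det DF_t(y).$$
Then I would differentiate the integrand pointwise in $t$. The chain rule yields
$$\partial_t[f(t,F_t(y))]=(\partial_tf)(t,F_t(y))+\nabla f(t,F_t(y))\cdot\partial_tF_t(y),$$
and Jacobi's identity (the derivative of a determinant of a $C^1$ family of matrices) together with $\partial_tF_t(y)=V_t(F_t(y))$ gives
$$\partial_tJ_t(y)=J_t(y)\,(\divv V_t)(F_t(y)).$$
Combining the two and recognizing the product rule $\nabla f\cdot V_t+f\,\divv V_t=\divv(f\,V_t)$ inside the bracket, one obtains
$$\partial_t\bigl[f(t,F_t(y))\,J_t(y)\bigr]=\Bigl[\partial_tf(t,\cdot)+\divv\bigl(f(t,\cdot)\,V_t\bigr)\Bigr](F_t(y))\,J_t(y).$$

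Next I would justify the exchange of $\partial_t$ and $\int_E$ by dominated convergence: the assumptions $f\in C^1([0,T);L^1)\cap C^0([0,T);W^{1,1})$ and $F\in C^1([0,T);W^{1,\infty})$ provide uniform-in-$t$ integrable dominations of both difference quotients and the derivative on compact time intervals, because $J_t$ and $V_t$ are uniformly bounded in $L^\infty$ and the composition $g\mapsto g\circ F_t$ is continuous $L^1\to L^1$ with norm controlled by $\|J_{F_t^{-1}}\|_\infty$. Then changing variables back through $x=F_t(y)$ in the resulting integral over $E$ yields exactly the claimed identity.

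The only delicate point is the limited spatial regularity: $F_t$ is merely $W^{1,\infty}$ and $f(t,\cdot)$ merely $W^{1,1}$, so the chain rule for $f\circ F_t$ and the pointwise Jacobi identity require justification. The standard remedy is an approximation argument: one regularizes both $F$ and $f$ by spatial mollification, in which case the computation above is elementary, and then passes to the limit using continuity of each side of the formula in the topology $C^1_tW^{1,\infty}_x\times(C^1_tL^1_x\cap C^0_tW^{1,1}_x)$. Since this lemma is entirely classical (the authors cite \cite[Corollary~5.2.8]{HP18}), the details of this approximation step are the main — but standard — obstacle, and I would simply refer to the reference rather than reproducing them.
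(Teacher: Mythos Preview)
The paper does not prove this lemma at all: it is stated as a recall of a classical result and the authors simply cite \cite[Corollary~5.2.8]{HP18}. Your sketch is the standard change-of-variables\,/\,Jacobi-identity argument that underlies this classical formula, so there is nothing to compare and your approach is fine; as you yourself note, the only delicate point is the approximation step to handle the limited regularity, and referring to the cited source for that is exactly what the paper does.
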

\begin{rem}
\label{rem: firsthadamard}
    In particular, when $F_t=\id+t\Phi$ with $\Phi\in W^{1,\infty}(\R^n;\R^n)$, we define
    \[
       \tphi_t(x):=V_t(x)=\Phi\circ F_t^{-1}(x).
    \]
\end{rem}

\begin{prop}[Shape differentiability of $u_\Phi$]
\label{prop: u'}
 The application
    \[
        \Phi\in W^{2,q}(B_1;\R^n)\longmapsto u_\Phi\in W^{1,q}_0(B_1)
    \]
    is of class $C^1$ in a neighborhood of 0. In particular, if $\Phi\in W^{2,q}(B_1;\R^n)$ is small enough and we denote by $u_t:=u_{t\Phi}$, then for every $t\in[0,1]$ we have that $u'_t\in W^{1,q}_0(B_1)$ is the unique solution in $H^1_0(B_1)$ to
    \begin{equation}
    \label{eq: u'}
        -\Delta u'_t = (\tphi_t\cdot\nu_t)\,\mrestr{d\Hn}{\pa E^{t\Phi}}, 
    \end{equation}
    (in the distributional sense), where $\widetilde{\Phi}_t=\Phi\circ(\id+t\Phi)^{-1}$ and $\nu_t$ is the exterior unit normal to $E^{t\Phi}$.
\end{prop}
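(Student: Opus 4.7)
The plan is the classical ``fix the domain'' trick: pull back $u_\Phi$ via the diffeomorphism $\id+\Phi$ so that the PDE lives on the fixed domain $B_1$, and then apply the implicit function theorem on Banach spaces. Since $q > n$, the embedding $W^{2,q} \hookrightarrow C^{1,s}$ ensures that for $\Phi$ small in $W^{2,q}$, $\id+\Phi$ is a $C^{1,s}$-diffeomorphism of $B_1$ equal to the identity near $\pa B_1$. Setting $\hu_\Phi := u_\Phi \circ (\id+\Phi)$ and changing variables $y = (\id+\Phi)(x)$ in the weak formulation of the PDE for $u_\Phi$, using $\chi_{E^\Phi} \circ (\id+\Phi) = \chi_E$, yields the transported equation on $B_1$:
\[
-\divv\bigl(A(\Phi)\, \nabla \hu_\Phi\bigr) = J(\Phi)\, \chi_E \text{ in } B_1, \qquad \hu_\Phi = 0 \text{ on } \pa B_1,
\]
where $J(\Phi) := \det(I+D\Phi) > 0$ and $A(\Phi) := J(\Phi)\,(I+D\Phi)^{-1}(I+D\Phi)^{-T}$.

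Next, I would apply the IFT to the map
\[
G : W^{2,q}(B_1;\R^n) \times \bigl(W^{2,q}(B_1) \cap W^{1,q}_0(B_1)\bigr) \to L^q(B_1), \qquad G(\Phi,v) := -\divv(A(\Phi) \nabla v) - J(\Phi)\chi_E,
\]
at the point $(0, u_E)$. Since $W^{2,q}$ is a Banach algebra continuously embedded in $C^{1,s}$, the maps $\Phi \mapsto A(\Phi)$ (into $W^{1,q}$) and $\Phi \mapsto J(\Phi)$ (into $W^{1,q} \hookrightarrow L^\infty$) are smooth, hence $G$ is $C^1$. The partial differential $\pa_v G(0, u_E) = -\Delta : W^{2,q} \cap W^{1,q}_0 \to L^q$ is an isomorphism by the elliptic regularity result \Cref{thm: regularity}, so the IFT yields a $C^1$ map $\Phi \mapsto \hu_\Phi$ near $0$. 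Composing with the smoothly depending inverse $(\id+\Phi)^{-1}$ then gives $\Phi \mapsto u_\Phi = \hu_\Phi \circ (\id+\Phi)^{-1}$ as a $C^1$ map into $W^{1,q}_0(B_1)$; the loss of one spatial derivative comes from the fact that differentiating the composition in $\Phi$ produces a factor $\nabla u_\Phi$.

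To identify $u'_t$ explicitly, I differentiate the weak formulation
\[
\int_{B_1} \nabla u_t \cdot \nabla \varphi \, dx = \int_{E^{t\Phi}} \varphi \, dx, \qquad \varphi \in C^\infty_c(B_1),
\]
in $t$. The $C^1$ regularity just established allows passing the derivative inside the left-hand side, producing $\int_{B_1} \nabla u'_t \cdot \nabla \varphi$. For the right-hand side, Hadamard's formula (\Cref{lem: distrderchar}) applied with $F_t = \id+t\Phi$ and velocity $\tphi_t = \Phi \circ (\id+t\Phi)^{-1}$, together with the divergence theorem and $\varphi = 0$ on $\pa B_1$, gives
\[
\frac{d}{dt} \int_{E^{t\Phi}} \varphi \, dx = \int_{E^{t\Phi}} \divv(\varphi\, \tphi_t)\, dx = \int_{\pa E^{t\Phi}} \varphi\, (\tphi_t \cdot \nu_t)\, d\Hn.
\]
The resulting identity, valid for every test function, is precisely the distributional form of \eqref{eq: u'}; uniqueness of the $H^1_0$-solution follows from Lax--Milgram applied to the source $(\tphi_t \cdot \nu_t)\,\mrestr{d\Hn}{\pa E^{t\Phi}} \in H^{-1}(B_1)$.

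The main technical point, and the one requiring the most care, is checking that $G$ and its partial differentials are genuinely $C^1$ between the stated Banach spaces: this relies on $W^{2,q}(B_1)$ being a Banach algebra continuously embedded in $C^{1,s}$, which is precisely what forces the choice of regularity $W^{2,q}$ on the deformation $\Phi$ rather than merely $W^{1,q}$.
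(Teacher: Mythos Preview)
Your proposal is correct and follows essentially the same approach as the paper: pull back via $\id+\Phi$ to obtain the transported equation $-\divv(A_\Phi\nabla\hu_\Phi)=J_\Phi\chi_E$, apply the implicit function theorem to the map $G$ on $W^{2,q}\times(W^{2,q}\cap W^{1,q}_0)\to L^q$ using that $-\Delta$ is an isomorphism, compose with $(\id+\Phi)^{-1}$ to descend to $W^{1,q}_0$, and then differentiate the weak formulation via Hadamard's formula to identify $u'_t$. The only cosmetic difference is that the paper emphasizes that $W^{1,q}$ (rather than $W^{2,q}$) is the relevant Banach algebra for the coefficients $A_\Phi$, since these are built from $D\Phi\in W^{1,q}$.
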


\begin{proof}
\begin{proofItemize}
\item  If $\norm{\Phi}_{2,q}<1$, we can invert the matrix $I_n+D\Phi$, so that, defining
\[
J_\Phi:=\det(I_n+D\Phi)\qquad \qquad A_\Phi:=J_\Phi\, (I_n+D\Phi)^{-1}(I_n+D\Phi)^{-T},
\]
we have in the distributional sense
\[
-\divv(A_\Phi \na \hu_\Phi)=\chi_E\,J_\Phi.
\]
\item We introduce 
$X=H^1_0(B_1)\cap W^{2,q}(B_1)$ and 
    \[
    \begin{array}{cccl}
    \mathcal{F}:&W^{2,q}(B_1;\R^n)\times X&\longrightarrow &L^q(B_1) \\[7 pt]
    &(\Phi,\hu) &\longmapsto &-\divv(A_\Phi \na \hu)-\chi_E\,J_\Phi.
    \end{array}\]

    Then $\F$ is of class $C^\infty$ in a neighbourhood of $0$.
    Indeed,
    we first notice that the application
    \[
        (A,\hu)\in \R^{n\times n}\times X \longmapsto -\divv(A\na \hu)\in L^p(B_1)
    \]
    is of class $C^\infty$ because it is linear and continuous in both variables. Analogously, using that $W^{1,q}$ is an algebra, multilinearity and continuity in the origin imply that the applications
    \begin{gather*}
        \Phi \in W^{2,q}(B_1) \longmapsto (I_n+D\Phi)^{-1}=\sum_{k=1}^{+\infty}(D\Phi)^k \in W^{1,q}(B_1) \\[9 pt]
        \Phi \in W^{2,q}(B_1) \longmapsto J_\Phi=\det(I_n+ D\Phi)\in W^{1,q}(B_1)
    \end{gather*}
    are of class $C^\infty$ in a neighborhood of $0$. Therefore, the application 
    \[
        \Phi \in W^{2,q}(B_1) \longmapsto A_\Phi\in W^{1,q}(B_1)
    \]
    is $C^\infty$ in a neighborhood of $0$, as well as $\F$.
\item  We are now in position to use the implicit function theorem to deduce from the previous point that the application
    \[
        \Phi\in W^{2,q}(B_1;\R^n)\longmapsto \hu_\Phi\in X,
    \]
    that satisfies $\F(\Phi,\hu_\Phi)=0$ is of class $C^\infty$ in a neighborhood of 0.
   Indeed, $\hu_0=u_E$ and the map
    \[
        \xi\in X \longmapsto D_{\hu}\F(0,u_E)[\xi]=-\De \xi - \chi_E,
    \]
    is a diffeomorphism from $X$ onto $L^p(B_1)$ thanks to the classical elliptic regularity (see for instance \cite[Theorem 9.14]{GT98}).
\item 
    Then, by noticing that $u_\Phi=\hu_\Phi\circ(\id+\Phi)^{-1}$, we know that the application
    \[
        \Phi\in W^{2,q}(B_1;\R^n)\longmapsto u_\Phi\in W^{1,q}_0(B_1)
    \]
    is $C^1$ in a neighborhood of $0$ (for instance, we can apply \cite[Lemma 5.3.3]{HP18} to $\widehat{u}_\Phi\circ (\id+\Phi)^{-1}$ and $\na\widehat{u}_\Phi\circ(\id+\Phi)^{-1}$).    
    Let us also recall that $u_{t}$ solves the weak equation:
    \begin{equation}
        \label{eq: weakELuPhi}
        \int_{B_1} \nabla u_{t}\cdot\nabla \varphi \, dx=\int_{E^{t\Phi}} \varphi \, dx \qquad \qquad \forall\varphi\in H^1_0({B_1}).
    \end{equation}
    Using \Cref{lem: distrderchar} to differentiate \eqref{eq: weakELuPhi}, we get that $u'_t$ solves
    \[
        \int_{B_1} \nabla u'_{t}\cdot\nabla \varphi \, dx=\int_{\pa {E^{t\Phi}}} \varphi\, (\tphi_t\cdot\nu_{t})\, d\Hn \qquad \qquad \forall\varphi\in H^1_0({B_1}),
    \]
    which is \eqref{eq: u'}.
\end{proofItemize}
\end{proof}

\begin{prop}[Shape differentiability of $w_\Phi$]
\label{prop: w'}
    Let $j\in C^1(\R_+)\cap C^2((0,\infty))$ such that
    \begin{equation}
    \label{eq: propw'Assumptionj''}
         \red{\lim_{s\to0^+}s^\al j''(s)\in (0,+\infty)}
    \end{equation}
    \red{for some $\al\in(-\infty,1)$}, and let $\eps\in(0,1)$. Then the application
    \[
        \Phi\in W^{2,q}(B_1;\R^n)\longmapsto w_\Phi\in 
        W^{1,q}_0(B_1)\cap W^{2,q}(B_{1-\eps})
    \]
    is of class $C^1$ in a neighborhood of 0. In particular, if $\Phi\in W^{2,q}(B_1;\R^n)$ is small enough and we denote by $w_t:=w_{t\Phi}$, then for $t\in[0,1]$ we have that $w'_t\in W^{1,q}_0(B_1)$ is the unique solution in $H^1_0(B_1)$ to
    \begin{equation}
    \label{eq: w'}
        -\Delta w'_t = j''(u_t)u'_t,
    \end{equation}
    where $u_t'$ is defined in \Cref{prop: u'}.
\end{prop}
\begin{proof}
We proceed as in the proof of \Cref{prop: u'}, with the extra difficulty that $j$ is not $C^2$ up to 0. 
\begin{proofItemize}
\item First we claim that the map
     \[
        \G: \Phi \in W^{2,q}(B_1;\R^n)\longmapsto j'(\hu_\Phi)\in L^q(B_1)\cap W^{1,q}(B_{1-\eps/2})
        \]
        is of class $C^1$ near 0. Since $j'$ is $C^1$ far from 0, we immediately have by \Cref{prop: u'} that $\Phi\mapsto j'(\hu_\Phi)\in  W^{1,q}(B_{1-\eps/2})$ is of class $C^1$. It remains to show that $\Phi\mapsto j'(\hu_\Phi)\in  L^{q}(B_{1})$ is $C^1$. As $j'$ is not assumed to be $C^1$ up to 0, as it is done for instance in \cite[Lemma 2.5]{FusZhan2017}, we approximate the functional $\G$: for small $\eps>0$ we define
        \[
            \G_\eps: \Phi \in W^{2,q}(B_1;\R^n)\longmapsto j'(\eps+\hu_\Phi)\in L^q(B_1). 
    \]
    From the proof of \Cref{prop: u'}, recalling
    \[
        X=H^1_0(B_1)\cap W^{2,q}(B_1)\subset C^1(\overline{B_1}),
    \] 
    there exists $\mathcal{U}$ neighborhood of 0 in $W^{2,q}(B_1;\R^n)$ such that the map $\Phi\in\mathcal{U}\longmapsto\hu_\Phi\in X$ is $C^1$. In particular $\norm{\hu_\Phi}_\infty$ is equibounded for $\Phi\in\mathcal{U}$, and since $j'\in C^1([\eps,+\infty))$, we get that $\G_\eps$ is $C^1$ in {$\mathcal{U}$} and that $\G_\eps(0)$ converges to $\G(0)$ in $L^q(B_1)$. Once we prove that 
    \[
\begin{array}{cccl}\G_\eps':&{\mathcal{U}}&\to&\Ll(W^{2,q}(B_1;\R^n),L^q(B_1))\\[2mm]
    &\Phi&\mapsto&\left(\eta\mapsto {j''(\eps+\widehat{u}_\Phi)\,\widehat{u}_\Phi'[\eta]}\right)\end{array}\] 
    converges uniformly in $\Phi$, from~\cite[Theorem 3.6.1]{Cartan1967} we can conclude that $\G$ is $C^1$ near 0. If we let $h(t)=tj''(t)$, then by \red{the assumption~\eqref{eq: propw'Assumptionj''} we have} $h\in C^0({\R_+})$ and by uniform continuity, $h(\eps+\hu_\Phi)$ converges to $h(\hu_\Phi)$ uniformly in $\Phi\in \mathcal{U}$. Therefore, it remains to check that 
    \begin{equation}
    \label{eq:uniformConvDeriv}
        \lim_{\eps\to 0^+}\sup_{\substack{\Phi\in\mathcal{U} \\ \norm{\eta}_{2,q}\le 1 }}\norm*{\hu'_\Phi[\eta]\ml(\frac{1}{\eps+\hu_\Phi}-\frac{1}{\hu_\Phi}\mr)}_{p} = 0.
    \end{equation}
     Up to choosing a smaller $\mathcal{U}$, there exist uniform positive constants $c,C$ such that $\norm{\na \hu'_\Phi[\eta]}_\infty\le C$, and $\abs{\na \hu_\Phi(y)}\ge c$ for every $y\in B_1\setminus B_{1-c}$ (here we used that $\abs{\na u_0}>0$ near $\pa B_1$). \red{In particular, $\abs{\hu'_\Phi[\eta](x)}\le C(1-|x|)$, and $\hu_\Phi(x)\ge c(1-|x|)$ for suitable constants $c,C>0$ and for every $x\in B_1$. }This leads to the following uniform estimate: for every $x\in B_1$, and for every $\eps\in(0,1)$,
    \[
        \abs*{\frac{\eps\,\hu'_\Phi[\eta](x)}{(\eps+\hu_\Phi(x))\,\hu_\Phi(x)}}\le \red{ \fr{C\eps(1-\abs{x})}{c(1-|x|)(\eps+c(1-\abs{x}))}\leq \fr{C}{1+c(1-\abs{x})}},
    \]
    which, by dominated convergence, ensures~\eqref{eq:uniformConvDeriv}.
    
    \item As in the proof of \Cref{prop: u'}, we obtain
    \[
    \begin{array}{cccl}
    \mathcal{F}:&W^{2,q}(B_1;\R^n)\times X&\longrightarrow &L^q(B_1) \\[7 pt]
    &(\Phi,\hu) &\longmapsto &-\divv(A_\Phi \na \hu)-j'(\widehat{u}_\Phi)\,J_\Phi.
    \end{array}\]
is $C^1$.
As \[
        D_{\hw}\F(0,w_E)[\xi]=-\De \xi - j'(u_E)
    \]
    is a diffeomorphism of $X$ onto $L^q(B_1)$, the implicit function theorem applies, and $\Phi\in W^{2,q}(B_1;\R^n)\mapsto \widehat{w}_\Phi\in X$ is of class $C^1$ in a neighborhood of 0, and 
    \[
         \Phi \in W^{2,q}(B_1;\R^n)\longmapsto w_\Phi\in W^{1,q}_0(B_1)
    \]
    is $C^1$ in a neighborhood of 0. 
    Moreover, the function $w_{t}$ solves the weak equation \eqref{eq: adjoint}, namely
    \begin{equation}
        \label{eq: weakELwPhi}
        \int_{B_1} \nabla w_{t}\cdot\nabla \varphi \, dx=\int_{B_1}  j'(u_{t})\vp \, dx \qquad \qquad \forall\varphi\in H^1_0({B_1}).
    \end{equation}
    Using \Cref{prop: u'}, we can differentiate \eqref{eq: weakELwPhi} to obtain that $w'_t$ solves
    \[
        \int_{B_1} \nabla w'_{t}\cdot\nabla \varphi \, dx=\int_{B_1} j''(u_t)u'_t\,\varphi\, dx \qquad \qquad \forall\varphi\in H^1_0({B_1}),
    \]
    which proves \eqref{eq: w'}.

\end{proofItemize}
\end{proof}

\begin{prop}[Shape derivative of $\J$]
\label{prop: firstshapeder}
Let $j\in C^1(\R_+)\cap C^2((0,+\infty))$ be such that

\[
          \red{\lim_{s\to0^+}s^\al j''(s)\in (0,+\infty)}
\]
    \red{for some $\al\in(-\infty,1)$}.  Then the application
    \[
        \Phi\in W^{2,q}(B_1;{\R^n})\longmapsto \J(E^\Phi)\in \R
    \]
    is of class $C^2$ in a neighborhood of 0.
 Moreover, if $\Phi\in W^{2,q}(B_1;\R^n)$ is small enough, then $J(t)=\J(E^{t\Phi})$ satisfies for $t\in[0,1]$,
\begin{equation} 
\label{eq: J'}
\begin{split}
    J'(t)=\int_{\partial E^{t\Phi}} w_{t}\, (\tphi_t\cdot\nu_t)\,d\Hn,
\end{split}
\end{equation}
where $w_t=w_{E^{t\Phi}}$ is the adjoint state defined in \Cref{defi: adjoint}, $\nu_t=\nu_{E^{t\Phi}}$ is the outer unit normal to $E^{t\Phi}$, and $\tphi_t=\Phi\circ(\id+t\Phi)^{-1}$. Moreover, letting $g_t=\tphi_t\cdot\nu_t$,
\begin{equation} 
\label{eq: J''}
\begin{split}
    J''(t)=\int_{\pa E^{t\Phi}}\ml(w'_tg_t+ g_t(\na w_t\cdot\tphi_t)\mr)\,d\Hn+a_t(\Phi,\Phi),
\end{split}
\end{equation}
where
\[
a_t(\Phi,\Phi)=\int_{\pa E^{t\Phi}}w_t\ml(g_t \divv(\tphi_t)-{((D\tphi_t)\tphi_t)}\cdot\nu_t\mr)d\Hn.
\]
\end{prop}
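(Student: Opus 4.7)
I would establish the three assertions---the first-derivative formula~\eqref{eq: J'}, $C^2$ Fr\'echet regularity of $\Phi\mapsto\J(E^\Phi)$, and the second-derivative formula~\eqref{eq: J''}---using a pull-back of the boundary integrals to the fixed set $E$. For~\eqref{eq: J'}, I would differentiate $J(t)=\int_{B_1}j(u_t)\,dx$ using \Cref{prop: u'} to obtain $J'(t)=\int_{B_1}j'(u_t)\,u'_t\,dx$. Testing the weak form~\eqref{eq: adjoint} of the adjoint equation against $u'_t\in H^1_0(B_1)$ rewrites this as $\int_{B_1}\na w_t\cdot\na u'_t\,dx$, and testing the distributional equation~\eqref{eq: u'} for $u'_t$ against $w_t\in H^1_0(B_1)$ yields~\eqref{eq: J'} directly.

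I would then pull~\eqref{eq: J'} back to $\pa E$ via $F_t:=\id+t\Phi$. Setting $\hw_t:=w_t\circ F_t$, the identities $\tphi_t\circ F_t=\Phi$, $\nu_t\circ F_t=(DF_t)^{-T}\nu/|(DF_t)^{-T}\nu|$ and the tangential Jacobian $\det(DF_t)\,|(DF_t)^{-T}\nu|$ combine to give
\begin{equation*}
J'(t)=\int_{\pa E}\hw_t\,\bigl[\det(DF_t)(DF_t)^{-T}\nu\bigr]\cdot\Phi\,d\Hn.
\end{equation*}
Since the cofactor $\det(DF_t)(DF_t)^{-T}$ is polynomial in $DF_t$, hence $C^\infty$ in $t$, and $\hw_t$ is $C^1$ in $t$ by \Cref{prop: w'}, the right-hand side is $C^1$ in $t$: this gives $J\in C^2([0,1])$, and by applying the same pullback argument around a generic $\Phi$ in the direction $\Phi_1$ (via the deformation $G_{t,\Phi_1}:=(\id+\Phi+t\Phi_1)\circ(\id+\Phi)^{-1}$, whose velocity at $t=0$ is $\Phi_1\circ(\id+\Phi)^{-1}$) one obtains full $C^2$ Fr\'echet regularity of $\Phi\mapsto\J(E^\Phi)$. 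Differentiating the displayed pullback formula in $t$, using $\pa_t\hw_t=(w'_t+\na w_t\cdot\tphi_t)\circ F_t$ on one hand, and Jacobi's formula together with $\pa_t(DF_t)^{-T}=-(DF_t)^{-T}(D\Phi)^T(DF_t)^{-T}$ on the other, then pushing the result forward back to $\pa E^{t\Phi}$, recovers exactly
\begin{equation*}
\int_{\pa E^{t\Phi}}g_t\bigl(w'_t+\na w_t\cdot\tphi_t\bigr)\,d\Hn+a_t(\Phi,\Phi),
\end{equation*}
which is~\eqref{eq: J''}.

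\textbf{Main obstacle.} The main effort is in the final bookkeeping step: one must carefully expand the $t$-derivative of $[\det(DF_t)(DF_t)^{-T}\nu]\cdot\Phi$ via Jacobi's formula and recognize the resulting expression, after push-forward to $\pa E^{t\Phi}$, as the Eulerian combination $w_t\bigl(g_t\divv(\tphi_t)-(D\tphi_t\,\tphi_t)\cdot\nu_t\bigr)$ defining $a_t(\Phi,\Phi)$. The identity $\pa_t\tphi_t=-(D\tphi_t)\tphi_t$, obtained by differentiating $\tphi_t\circ F_t=\Phi$, produces the $(D\tphi_t\,\tphi_t)\cdot\nu_t$ term, while $g_t\divv(\tphi_t)$ arises from the trace part of the cofactor derivative; once the algebra is organized in this way the remaining computation is purely polynomial in $DF_t$ and its inverse.
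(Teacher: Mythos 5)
Your proof is correct, but it takes a genuinely different route from the paper's. After the common first step (obtaining $J'(t)=\int_{\partial E^{t\Phi}}w_t\,g_t\,d\Hn$ via the two integrations by parts with the adjoint state), the paths diverge. The paper converts the first derivative into a \emph{volume} integral $\int_{E^{t\Phi}}\divv(w_t\tphi_t)\,dx$, deduces $C^2$ regularity from the $C^1$ dependence of $\Phi\mapsto w_{E^\Phi}\in W^{2,q}(B_{1-\eps})$ and the smoothness of $\Phi\mapsto\int_{E^\Phi}f\,dx$ for fixed $f\in W^{1,1}$, and then differentiates by a second application of the Hadamard formula (Lemma~\ref{lem: distrderchar}), using $\pa_t\tphi_t=-(D\tphi_t)\tphi_t$ at the very end. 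You instead stay with the \emph{boundary} integral, pull it back to $\partial E$ via the cofactor identity $(\nu_t\circ F_t)\,J_t^\tau=\det(DF_t)(DF_t)^{-T}\nu$, and differentiate that fixed-domain expression directly: the $\hw_t$ factor produces the $w'_tg_t+g_t(\na w_t\cdot\tphi_t)$ terms, while Jacobi's formula and $\pa_t(DF_t)^{-T}$ produce precisely $a_t(\Phi,\Phi)$ after pushing back forward, using $(D\tphi_t)\circ F_t=D\Phi\,(DF_t)^{-1}$ to recognize the Eulerian quantities. The trade-off is transparent: your route avoids the paper's two passes through the divergence theorem and the appeal to Hadamard's formula for volume integrals, at the cost of more matrix-algebra bookkeeping in the cofactor derivative. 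One point to tighten: for $C^2$ regularity in the \emph{Fréchet} sense (as opposed to $t\mapsto J(t)\in C^2$ for each fixed $\Phi$), your $G_{t,\Phi_1}$ device is the right idea, but you should make explicit that the pulled-back representation of $\J'(E^\Phi)[\Phi_1]$ is jointly continuous (indeed $C^1$) in $\Phi$ and linear-continuous in $\Phi_1$, which is what the paper's volume-integral representation buys it more directly.
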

\begin{proof}
    For every $\Phi\in W^{2,q}(B_1;\R^n)$
    \[
        \J(E^\Phi)=\int_{B_1}j(u_\Phi)\,dx,
    \]
    so \Cref{prop: u'}, joint with $j\in C^1(\R_+)$, gives  that  $\Phi\mapsto\J(E^\Phi)$ is $C^1$.
    Let us denote by $u_t:=u_{t\Phi}$. We begin by noticing that
    \[
        J'(t)=\int_{B_1}j'(u_t)u'_t\,dx=\int_{B_1}\na w_t\cdot\na u'_t\,dx=\int_{\partial E^{t\Phi}}w_t\,g_t\,d\Hn
    \]
    where we used \eqref{eq: adjoint} and~\eqref{eq: u'}. Therefore, we have the following shape derivative
    \[
        \J'(E_\Phi)[\Psi] = \int_{\pa E^\Phi} w_{E^\Phi}(\Psi\cdot\nu_{E^\Phi})\,d\Hn=\int_{E^\Phi} \divv(w_{E^\Phi}\Psi)\,dx.
    \]
   By \Cref{prop: w'} we know that the map $\Phi\mapsto w_{E^\Phi}\in W^{2,q}(B_{1-\eps})$ is of class $C^1$ in a neighbourhood of 0. In particular, if $\Phi$ and $\eps$ are small, we have $E^\Phi\sbs B_{1-\eps}$. Finally, since for every fixed $f\in W^{1,1}$ the map $\Phi\mapsto \int_{E^\Phi}f\,dx$ is of class $C^1$ in a neighbourhood of 0 (see for instance~\cite[Theorem 5.2.2]{HP18}), then $\J$ is of class $C^2$.
    We now compute again $J'(t)$ as
    \[
        J'(t)=\int_{E^{t\Phi}}\divv(w_t \tphi_t)\,dx,
    \]
    so that by the Hadamard's formula \Cref{lem: distrderchar} we get
    \[
        J''(t)=\int_{\pa E^{t\Phi}}\divv(w_t \tphi_t)\tphi_t\cdot\nu_t\,d\Hn+\int_{\pa E^{t\Phi}}(w'_t\tphi_t+w_t \, \pa_t\tphi_t)\cdot\nu_t\,d\Hn.
    \]
    Formula \eqref{eq: J''} finally follows by differentiating in $t$ the equation $\tphi_t\circ (\id+t\Phi)=\Phi$, leading to
    \[
        \pa_t \tphi_t = - (D\tphi_t)\tphi_t.
    \]
\end{proof}
\begin{rem}
    \label{rem: j'in0}
    When we evaluate \eqref{eq: J'} in $t=0$, we get 
    \[
        J'(0)=\int_{\pa E}w_E(\Phi\cdot\nu_E)\,d\Hn.
    \]
    When $\Phi$ is orthogonal to $\pa E$ (i.e. $\Phi=g_0\nu_0$ on $\pa E$), then \eqref{eq: J''} in $t=0$ reads 
    \[
        J''(0)=\int_{\pa E}\mleft(w_0'\,g_0+w_0 H_E g_0^2+\fr{\pa w_0}{\pa \nu_0}g_0^2\mright)\,d\Hn,
    \]
    with $H_E$ the mean curvature of $\pa E$. Indeed, notice that in this case
    \[
        g_0\divv(\Phi)-((D\Phi)\Phi)\cdot\nu_0=g_0^2\divv^{\pa E}(\nu_0)=g_0^2 H_E.
    \]
\end{rem}

\medskip
The previous computations allows us to write the Lagrangian for the maximizing problem
\begin{equation}
    \label{eq: maxproblem}
        \max_{\abs{E}=m}\J(E).
    \end{equation}
    In the following, for $x$ close to $\pa B_*$, we let $\pi_{\pa B_*}(x)$ be the unique projection of $x$ onto $\pa B_*$, and we let $\nu_0(x)=\nu_0(\pi_{\pa B_*}(x))$ be the extended unit normal to $\pa B_*$.
\begin{cor}\label{cor: lagrangian}
\red{Under the same assumptions of~\Cref{prop: firstshapeder}, }for $\tau\in\R$ and $E\subset B_1$, we define 
 \[
        \Ll_\tau(E):=\J(E)+\tau\abs{E}.
    \]
For $m\in(0,|B_1|)$ and $E=B_*$ the centered ball of volume $m$, we set
    \begin{equation} 
    \label{eq: tau}
        \tau=-\restr{w_0}{\partial B_*}.
    \end{equation}
    Then $\Phi\in W^{2,q}(B_1;\R^n)\mapsto \Ll_\tau(B_*^\Phi)$ is of class $C^2$ near 0, and
    \begin{enumerate}[label=(\roman*)]
    \item $\Ll_\tau'(B_*)\equiv 0$
    \item for every $\Phi\in W^{2,q}(B_1;\R^n)$ small enough such that $\Phi$ is normal on $\partial B_*$ and $\widetilde{\Phi}_t=\Phi$ for $t\in[0,1]$, if we denote $L(t)=\Ll_\tau(B_*^{t\Phi})$ then for every $t$,
    \begin{equation}
	\label{eq: L''t}
        L''(t)=\int_{\pa B_*^{t\Phi}}\ml(w'_t g+g^2\fr{\pa w_t}{\pa \nu_0}\mr)(\nu_t\cdot\nu_0)\,d\Hn+\int_{\pa B_*^{t\Phi}}(w_t+\tau)(\nu_t\cdot\nu_0)g^2\divv(\nu_0)\,d\Hn,   
\end{equation}
    where $g=\Phi\cdot\nu_0$.
    \end{enumerate}
    \end{cor}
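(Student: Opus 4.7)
The plan is first to note that $\Phi \mapsto \lvert B_*^\Phi\rvert$ is of class $C^\infty$ near $0$ by the classical Hadamard formula (e.g.\ \cite[Sec.~5.2]{HP18}), and $\Phi \mapsto \J(B_*^\Phi)$ is $C^2$ near $0$ by \Cref{prop: firstshapeder}; hence $\Phi \mapsto \Ll_\tau(B_*^\Phi)$ is $C^2$ near $0$.

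\textbf{Proof of (i).} Evaluating \eqref{eq: J'} at $t=0$ and adding the standard first-order Hadamard formula for the volume, I obtain
\[
\Ll_\tau'(B_*)[\Phi] = \int_{\partial B_*}(w_0+\tau)(\Phi\cdot\nu_0)\,d\Hn.
\]
Since $u_0 = u_{B_*}$ is radial, so is $w_0$ (as the unique solution of the radially symmetric Dirichlet problem $-\Delta w_0 = j'(u_0)$ in $B_1$). In particular $w_0$ is constant on $\partial B_*$, and by the choice $\tau=-w_0\rvert_{\partial B_*}$ the integrand vanishes identically, giving (i).

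\textbf{Proof of (ii): reduction to a boundary computation.} I combine \eqref{eq: J''} with the standard second-order Hadamard formula for the volume,
\[
V''(t) = \int_{\partial B_*^{t\Phi}}\ml(g_t\divv(\tphi_t) - ((D\tphi_t)\tphi_t)\cdot\nu_t\mr)d\Hn,
\]
to obtain
\[
L''(t) = \int_{\partial B_*^{t\Phi}}\ml(w'_t g_t + g_t\,\na w_t\cdot\tphi_t\mr)d\Hn + \int_{\partial B_*^{t\Phi}}(w_t+\tau)\ml(g_t\divv(\tphi_t) - ((D\tphi_t)\tphi_t)\cdot\nu_t\mr)d\Hn.
\]
Under the hypotheses $\tphi_t=\Phi$ and $\Phi$ normal on $\partial B_*$, I extend $\nu_0$ via $\pi_{\pa B_*}$ as in the statement, and write $\Phi = g\,\nu_0$ in a neighborhood of $\pa B_*$. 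Then $g_t = \Phi\cdot\nu_t = g\,(\nu_0\cdot\nu_t)$ and $\na w_t\cdot\Phi = g\,\partial_{\nu_0} w_t$, which recasts the first integrand into the form appearing in \eqref{eq: L''t}.

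\textbf{The key cancellation.} The only nontrivial step is simplifying the $(w_t+\tau)$ integrand. Because $\nu_0$ is extended radially from a centered sphere, $\nu_0(x)=x/\abs{x}$ is constant along rays through the origin, so $(D\nu_0)\nu_0 = 0$. This gives
\[
\divv(g\,\nu_0) = \pa_{\nu_0}g + g\,\divv(\nu_0), \qquad (D(g\,\nu_0))(g\,\nu_0) = g\,\pa_{\nu_0}g\,\nu_0,
\]
and therefore
\[
g_t\divv(\Phi) - ((D\Phi)\Phi)\cdot\nu_t = g(\nu_0\cdot\nu_t)\ml(\pa_{\nu_0}g + g\divv(\nu_0)\mr) - g\,\pa_{\nu_0}g\,(\nu_0\cdot\nu_t) = g^2\divv(\nu_0)(\nu_0\cdot\nu_t).
\]
Inserting this identity in the formula for $L''(t)$ yields \eqref{eq: L''t}. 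The main obstacle is precisely this cancellation: without the radial extension of $\nu_0$ making $(D\nu_0)\nu_0$ vanish, an extraneous $\pa_{\nu_0}g$ contribution would survive and spoil the clean expression used later in the coercivity analysis.
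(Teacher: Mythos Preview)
Your proof is correct and follows essentially the same route as the paper's. The only cosmetic difference is in the handling of the $(w_t+\tau)$ term: the paper first uses the hypothesis $\tphi_t=\Phi$ to set $(D\Phi)\Phi=0$ globally (so that $V''(t)=\int_{\pa B_*^{t\Phi}}\divv(\Phi)g_t\,d\Hn$ without a second term) and then deduces $g\,\pa_{\nu_0}g=0$ to simplify $g\,\divv(\Phi)$, whereas you retain both contributions and show that the $\pa_{\nu_0}g$ pieces cancel against each other via $(D\nu_0)\nu_0=0$. Both arguments rest on the same identity and the same radial extension of $\nu_0$; note that, as in the paper, writing $\Phi=g\nu_0$ in a \emph{neighborhood} (rather than just on $\pa B_*^{t\Phi}$) is what makes the expansion of $\divv(\Phi)$ and $D\Phi$ legitimate, and this comes from the specific form $\Phi=\Phi\circ\pi_{\pa B_*}$ used later in Sections~\ref{ssect:coercivity}--\ref{sec: imprcont}.
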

    \begin{proof}
        Let $V(t)=\abs{B_*^{t\Phi}}$. From Lemma \ref{lem: distrderchar} we have
        \[
                V'(0)=\int_{\partial B_*}(\Phi\cdot\nu)\,d\Hn,
        \]
        so \textit{(i)} follows from \eqref{eq: J'}.

        Using two times \autoref{lem: distrderchar} gives (see also~\cite[Proof of Proposition 4.1]{DL19}),
        \[
            V''(t)=\int_{\pa B_*^{t\Phi}}\divv(\Phi)(\Phi\cdot\nu_t)d\Hn
        \]
so with \autoref{prop: firstshapeder} and the fact that $0=\pa_t \tphi_t$, we get, letting $g_t=\Phi\cdot \nu_t$,
\[L''(t)=\int_{\pa B_*^{t\Phi}}w_t'g_t+(\na w_t\cdot\Phi)g_t+\int_{\pa B_*^{t\Phi}}(w_t+\tau)g_t\divv(\Phi).\]
        Now, as $\Phi$ is normal on $\pa B_*$ and $\Phi\circ(\id+t\Phi)=\Phi$, we have for every $t$ that $\Phi_{|\pa B_*^{t\Phi}}=g\nu_0$,  and we conclude using $0=\pa_t\tPhi_t=-(D\Phi)\Phi=-g\na g\cdot\nu_0$ so that $g\divv(\Phi)=g^2\divv(\nu_0)$.
        \end{proof}

    \begin{rem}
        The general scheme of proof of \Cref{thm: maintheorem} was strongly inspired by \cite{M21}; nevertheless, it seems to us that \cite[Section 2.5.7 and formula before (73)]{M21} are incomplete (note that these computations are also used in~\cite{MRB22,CMP23}). Compared to our formula \eqref{eq: L''t}, the first discrepancy is due to the assumption $\tphi_t=\Phi$. This seems to be missing in \cite{M21}, but without such assumption we should have
        \[
            L''(t)=\Ll_\tau''(B_*^{t\Phi})[\tphi_t,\tphi_t]+\Ll_\tau'(B_*^{t\Phi})[\pa_t \tphi_t]
        \]
       while in \cite{M21} it is used $L''(t)=\Ll''_\tau(B_*^{t\Phi})[\Phi,\Phi]$, which seems incorrect to us without assuming $\tphi_t$ constant in $t$. The second discrepancy is about the outer unit normal appearing in the equality: $\nu$ in~\cite[Formula before (73)]{M21} represents the outer unit normal to $\pa B_*$, but it should be replaced with $\hnu_t=\nu_t\circ(\id+t\Phi)$.
    \end{rem}

\begin{rem}
    We also point out a computation mistake in \cite[Proof of Proposition 4.1]{DL19} where $H$ should be replaced with $\widehat{\divv\nu_0}=(\divv\nu_0)\circ(\id+t\Phi)$. Indeed, $\divv \nu_0(x)$ is not the mean curvature of $\pa E$ in $\pi_{\pa E}(x)$, it represents, up to the sign, the mean curvature in $x$ of the boundary of the outer parallel set 
    \[
        \pa \Set{y\in \R^n | d(y,\pa E)\le d(x,\pa E)}.
    \]
\end{rem}
\subsection{Step 2: coercivity of the second order shape derivative in $B_*$}\label{ssect:coercivity}

The goal of this section is to prove a sufficient optimality condition for problem \eqref{eq: maxproblem}:

\begin{prop}
    \label{prop: eigentostab}
    Let $m\in(0,\abs{B_1})$, $B_*$ the centered ball of volume $m$, and $j\in C^1(\R_+)\cap C^2((0,+\infty))$ such that $j'(0)\geq 0$, $j''>0$ on $(0,\infty)$ and
    \[
           \red{\lim_{s\to0^+}s^\al j''(s)\in (0,+\infty)}
    \]
    \red{for some $\al\in(-\infty,1)$}. 
    Then there exist positive constants $c=c(j,m), \eta=\eta(j,m)$ such that for every $\Phi\in W^{2,q}(B_1;\R^n)$ such that $|B_*^\Phi|=|B_*|$ and $\norm{\Phi}_{\infty}<\eta$ we have
    \begin{equation*}
       \Ll_\tau''(B_*)[\Phi,\Phi]\le - c\norm{\Phi\cdot \nu_0}_{L^2(\pa B_*)}^2,
    \end{equation*}
    where $\Ll_\tau$ is the Lagrangian defined in \Cref{cor: lagrangian}.
\end{prop}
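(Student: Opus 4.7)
The plan is to compute $\Ll_\tau''(B_*)[\Phi,\Phi]$ explicitly and reduce the coercivity to a spectral analysis on spherical harmonics of $\pa B_*$. Since $\Ll_\tau'(B_*)\equiv 0$ by \Cref{cor: lagrangian}(i), the classical structure theorem for second-order shape derivatives at critical points ensures that $\Ll_\tau''(B_*)[\Phi,\Phi]$ depends only on the normal trace $g:=\Phi\cdot\nu_0$ on $\pa B_*$; we may therefore assume $\Phi$ is normal to $\pa B_*$ and apply \eqref{eq: L''t} at $t=0$. With the choice $\tau=-\restr{w_0}{\pa B_*}$, the curvature integrand vanishes identically on $\pa B_*$, and combining the weak formulations of \Cref{prop: u'} and \Cref{prop: w'} with two integrations by parts yields
\[
\Ll_\tau''(B_*)[\Phi,\Phi]=\int_{B_1} j''(u_0)(u_0')^2\,dx+\int_{\pa B_*} g^2\,\pa_{\nu_0} w_0\,d\Hn.
\]
The boundary term is the coercive piece: $w_0$ is radial with $-\De w_0=j'(u_0)>0$ in $B_1$ (because $u_0>0$ in $B_1$ and $j'$ is strictly increasing from $j'(0)\ge 0$), so the radial profile $W(r):=w_0(\abs{x})$ obeys $r^{n-1}W'(r)=-\int_0^r s^{n-1}j'(U(s))\,ds<0$ for all $r>0$, giving $\pa_{\nu_0}w_0|_{\pa B_*}=W'(r_*)=:-c_0<0$ and $\int_{\pa B_*} g^2\,\pa_{\nu_0}w_0\le -c_0\norm{g}_{L^2(\pa B_*)}^2$.

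By radiality of $u_0$, the full quadratic form diagonalizes under the spherical harmonic decomposition $g=\sum_{k\ge 0}g_k$ on $\pa B_*=r_*\Spn$. For each orthonormal degree-$k$ harmonic $Y_k$, the function $u_0'$ with normal trace $g=Y_k$ takes the form $u_0'(x)=\phi_k(\abs{x})Y_k(x/\abs{x})$, where $\phi_k$ solves the radial ODE
\[
-\phi_k''-\fr{n-1}{r}\phi_k'+\fr{k(k+n-2)}{r^2}\phi_k=0 \quad \text{on } (0,r_*)\cup(r_*,1),
\]
with regularity at $0$, $\phi_k(1)=0$, continuity at $r_*$ and jump $\phi_k'(r_*^-)-\phi_k'(r_*^+)=1$. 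Explicit integration gives $\phi_k(r)=A_k r^k$ on $(0,r_*)$ and $\phi_k(r)=B_k(r^k-r^{-(k+n-2)})$ on $(r_*,1)$, with $\abs{A_k},\abs{B_k}=O(r_*^{k+n-1}/k)$. Substituting,
\[
\Ll_\tau''(B_*)[\Phi,\Phi]=\sum_{k\ge 0}\mu_k\norm{g_k}_{L^2(\pa B_*)}^2,\quad \mu_k:=-c_0+r_*^{-(n-1)}\int_0^1 j''(U(r))\,\phi_k(r)^2\,r^{n-1}\,dr.
\]

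The crux is to prove $\mu_k\le -c<0$ uniformly for $k\ge 1$. For large $k$, the profiles $\phi_k$ concentrate near $\{r=r_*\}$ (decaying like $(r/r_*)^k$ and $(r_*/r)^{k+n-2}$), and combined with the hypothesis $j''(u_0)\in L^1(B_1)$ — which absorbs any blow-up of $j''$ near $\pa B_1$ — a direct estimate gives $\int_0^1 j''(U)\phi_k^2\,r^{n-1}\,dr=O(k^{-3})$, so $\mu_k\to -c_0$ and $\mu_k\le -c_0/2$ for all $k\ge k_0$. For the finitely many low-lying modes $1\le k<k_0$, the argument proceeds by a compactness plus variational scheme: $g\mapsto u_0'[g]$ is compact from $L^2(\pa B_*)$ into the weighted space $L^2(B_1,j''(u_0)\,dx)$, so $\Ll_\tau''(B_*)$ is a compact perturbation of $-c_0\norm{g}_{L^2}^2$ and admits at most finitely many non-negative eigenmodes; any such mode orthogonal to constants could be extended, via a $O(t^2)$ correction ensuring volume preservation, to a deformation $E_t\neq B_*$ with $\Ll_\tau(E_t)\ge\Ll_\tau(B_*)+o(t^2)$, contradicting the strict maximality of $B_*$ in $\M_m$ (\Cref{prop: exist}(2)) after a higher-order Taylor expansion. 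Finally, the exact constraint $\abs{B_*^\Phi}=\abs{B_*}$ Taylor-expands as $\int_{\pa B_*}g\,d\Hn=O(\norm{\Phi}_\infty\norm{g}_{L^2})$, hence $\norm{g_0}_{L^2}^2\le C\norm{\Phi}_\infty^2\norm{g}_{L^2}^2$, and
\[
\Ll_\tau''(B_*)[\Phi,\Phi]\le\abs{\mu_0}\norm{g_0}^2-c\sum_{k\ge 1}\norm{g_k}^2\le\ml(C\abs{\mu_0}\norm{\Phi}_\infty^2-c\mr)\norm{g}^2\le-\tfrac{c}{2}\norm{g}^2
\]
for $\norm{\Phi}_\infty<\eta$ small enough completes the proof. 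The main obstacle is the strict uniform negativity of the low modes $1\le k<k_0$: ODE asymptotics handle high frequencies, but excluding non-negative eigenmodes at the bottom of the spectrum is the delicate step, requiring the global variational characterization of $B_*$ together with care in producing genuine volume-preserving competitors from abstract eigenmodes.
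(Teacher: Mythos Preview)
Your setup is correct and matches the paper: the quadratic form diagonalizes along spherical harmonics as $\mu_k=-c_0+\text{(positive term)}$, and your high-frequency argument showing the positive term tends to zero is sound. The genuine gap is in the treatment of the low modes $1\le k<k_0$. The variational argument you sketch can only deliver the second-order \emph{necessary} condition $\mu_k\le 0$: strict maximality of $B_*$ in $\M_m$ says $\J(E_t)<\J(B_*)$ for $E_t\neq B_*$, but combined with $\J(E_t)\ge\J(B_*)+o(t^2)$ this merely forces the $o(t^2)$ term to be negative, which is no contradiction at all. A strict maximum is perfectly compatible with a degenerate Hessian (think $t\mapsto -t^4$), and the phrase ``after a higher-order Taylor expansion'' does not constitute a proof: nothing in the hypotheses controls the sign or non-vanishing of third or higher derivatives along a putative zero mode. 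Thus you cannot conclude $\mu_k<0$, let alone extract a uniform constant $c>0$ with $\mu_k\le -c$, and the proposition as stated requires exactly that strict gap.

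The paper fills this gap by a completely different, constructive route (Proposition~\ref{prop: computeigenv}). First, an ODE comparison argument shows that the sequence $(\lambda_k)_{k\ge 1}$ (where $1/\lambda_k$ is your positive term on the $k$-th mode) is non-decreasing, so it suffices to treat $k=1$. Then both $1/\lambda_1$ and $c_0=|\pa_\nu w_0|_{\pa B_*}$ are written as explicit one-dimensional integrals of $j''(u_0(r))$ against radial weights (see \eqref{eq: la>1} and \eqref{eq: explicitrho}), and a direct pointwise comparison of the integrands yields the strict inequality $1/\lambda_1<c_0$. This explicit computation is precisely the quantitative input that your soft argument from maximality cannot supply.
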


The rest of this section is devoted to the proof of \Cref{prop: eigentostab}, see the two substeps below. To simplify the computation of $\Ll_\tau''(B_*)$, we first remark that this quadratic form depends only on the trace of $\Phi$ on $\pa B_*$ (this is a well known fact, see for instance~\cite[proof of Theorem 2.3]{LNP16}). We can therefore replace $\Phi$ by $\Psi(x):=\Phi(\pi_{\pa B_*}(x))$ in a neighborhood of $\partial B_*$ and extend it smoothly to $B_1$. It is also well-known (see \cite[Theorem 5.9.2]{HP18}) that as $\Ll_\tau'(B_*)=0$, $\Ll_\tau''(B_*)$ only depends on $g=(\Phi\cdot\nu_0)_{|\partial B_*}$, so we can also assume $\Phi$ to be normal on $\pa B_*$. With these adaptations, we have $\widetilde{\Phi}_t=\Phi$ for every $t$, and we can apply \autoref{cor: lagrangian}, and $\Ll_\tau''(B_*)$ will reduce to a quadratic form on $L^2(\partial B_*)$ (see also~\cite{CMP23} and~\cite[Lemma 35]{CMP23supp}).

\subsubsection*{Substep 1: Rewriting $\Ll_\tau''(B_*)$}

We will compute $\Ll_\tau''(B_*)$ in terms of $g=(\Phi\cdot\nu_0)_{\pa B_*}$, defining a suitable eigenvalue problem that has been introduced in \cite{CMP23} (see in particular \cite[Theorem III]{CMP23} and \cite[Proposition 34]{CMP23supp}): 
from \Cref{prop: u'} and \Cref{prop: w'} we can consider $u'_0:=u'_{B_*}[\Phi]$ and $w'_0:=w'_{B_*}[\Phi]$ that are the unique solution in $H^1_0(B_1)$ to the equations
\[
-\De u'_0=g\, \mrestr{d\Hn}{\pa B_*} \qquad \qquad -\De w'_0=j''(u_0)u'_0.
\]
Using \Cref{cor: lagrangian} we get
\[
{\Ll_\tau''(B_*)[\Phi,\Phi]}=\int_{\pa B_*}\mleft(g w_0'-\abs*{\fr{\pa w_0}{\pa {\nu_0}}}g^2\mright)\,d\Hn.
\]
For every $g\in L^2(\pa B_*)$ we consider $(U_g,W_g)\in H_0^1(B_1;\R^2)$ the unique solution to the coupled boundary value problems
\[
-\De U_g= g\mrestr{d\Hn}{\pa B_*} \qquad \qquad -\De W_g=j''(u_0)U_g.
\]
Denoting $\tr_{\pa B_*}:W^{1,2}(B_1)\to L^2(\pa B_*)$ the trace operator on $\partial B_*$, we also define the operator
\[
    T: g\in L^2(\pa B_*)\longmapsto \tr_{\pa B_*}(W_g)\in L^2(\pa B_*),
\]
which is symmetric, as $\int_{\partial B_*}hW_gd\Hn=\int_{B_1}j''(u_0)U_gU_h$.
Then for $h,g\in L^2(\pa B_*)$ we define
\[
l_2(h,g):=\int_{\pa B_*} h\mleft(W_g-\abs{\pa_\nu w_0}g\mright)\,d\Hn
=\int_{\pa B_*} hT g\,d\Hn-\abs{\pa_\nu w_0}_{\pa B_*}\int_{\partial B_*}hg\,d\Hn\]
Note that, when $g=g_\Phi=(\Phi\cdot\nu_0)_
{|\partial B_*}$, we have $U_g= u'_0$ and $W_g= w'_0$, and therefore
\[
    \Ll_\tau''(B_*)[\Phi,\Phi]=l_2(g,g).
\]

\subsubsection*{Substep 2: diagonalization of $T$}

In the following, when $n\geq 2$ (the case $n=1$ will be dealt with separately, see below in the proof of \Cref{prop: eigentostab}) and $k\in\N$ we denote $(Y_{k,m})_{1\leq m\leq M(k)}$ the real spherical harmonics of degree $k$ (that is to say, a basis of the space of homogeneous, real and harmonic polynomials of degree $k$), and it is well known that these are eigenfunctions of the opposite of the spherical Laplacian $-\Delta_{\Spn}$, with eigenvalue $\La_k=k(k+n-2)$ ($M(k)$ being its multiplicity).

In this step we show that $T$ is diagonalizable and that the spherical harmonics are its eigenfunctions:
\begin{prop}
    \label{prop: computeigenv}
    Let $n\ge 2$, and let $j\in C^1(\R_+)\cap C^2((0,\infty))$ be such that $j'(0)\geq 0$, $j''>0$ and 
    \[
          \red{\lim_{s\to0^+}s^\al j''(s)\in (0,+\infty)}
    \]
    \red{for some $\al\in(-\infty,1)$}. 
    Then for every $k\in\N^*$, the spherical harmonics of degree $k$, $(Y_{k,m})_{1\leq m\leq M(k)}$ are eigenfunctions of $T$ with the same corresponding eigenvalue  $1/\lambda_k$. Moreover, the sequence $(\la_k)_{k\in\N^*}$ is non-decreasing in $k$ and we have
    \begin{equation}\label{eq:la>rho}\lambda_1>\left(|\partial_{\nu_0} w_0|_{|\pa B_*}\right)^{-1},
    \end{equation}
    where $w_0=w_{B_*}$ is the adjoint state of $u_0=u_{B_*}$.
\end{prop}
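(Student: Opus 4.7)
The strategy is to diagonalize $T$ via separation of variables and then carry out the required comparisons by explicit computation. Since $u_0$, and hence $j''(u_0)$, is radial, for each $k\ge 1$ and each spherical harmonic $Y_{k,m}$ the rotational symmetry of the defining boundary-value problems forces $U_{Y_{k,m}}(x)=U_k(|x|)Y_{k,m}(x/|x|)$ and $W_{Y_{k,m}}(x)=W_k(|x|)Y_{k,m}(x/|x|)$, where the radial profiles $U_k,W_k$ satisfy one-dimensional Sturm--Liouville problems on $(0,1)$: the profile $U_k$ solves the Euler-type equation $-(r^{n-1}U_k')'+\La_k r^{n-3}U_k=0$ for $r\neq r_*$ with jump condition $U_k'(r_*^-)-U_k'(r_*^+)=1$ (from the surface source on $\pa B_*$), and $W_k$ solves the same operator with right-hand side $r^{n-1}j''(u_0(r))U_k(r)$; both vanish at $r=1$ and are bounded at $0$. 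Hence $TY_{k,m}=W_k(r_*)Y_{k,m}$, and since $\{Y_{k,m}\}_{k,m}$ is a complete orthonormal basis of $L^2(\Spn)$, this exhausts the spectrum of $T$ and defines $1/\lambda_k:=W_k(r_*)$.

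Using the fundamental solutions $r^k$, $r^{-(k+n-2)}$ of the Euler equation, the boundary conditions and the jump give the explicit formula
\[
U_k(r)=\frac{1}{2k+n-2}\begin{cases}(r_*^{1-k}-r_*^{k+n-1})\,r^k,& r\le r_*,\\ r_*^{k+n-1}(r^{-(k+n-2)}-r^k),& r\ge r_*.\end{cases}
\]
Testing the weak equation for $U_{Y_{k,m}}$ against $W_{Y_{k,m}}$ and vice versa further yields $r_*^{n-1}W_k(r_*)=\int_0^1 j''(u_0(r))U_k(r)^2\,r^{n-1}\,dr$, so the desired monotonicity $\lambda_k\le\lambda_{k+1}$ reduces (since $j''(u_0)>0$) to the pointwise comparison $U_{k+1}(r)<U_k(r)$ on $(0,1)$. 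Setting $\al=2k+n-2\ge 2$, on both sides of $r_*$ the ratio $U_{k+1}/U_k$ factors as a quantity strictly less than $1$ (coming from $r/r_*<1$ or $r_*/r<1$) times $\al(1-\rho^{\al+2})/[(\al+2)(1-\rho^\al)]$ for some $\rho\in(0,1)$; this latter quantity is itself strictly less than $1$ thanks to the inequality $2(1-\rho^\al)>\al\rho^\al(1-\rho^2)$, which follows from Bernoulli's inequality applied with exponent $\al/2\ge 1$.

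The core of the argument is the strict inequality $\lambda_1>|\pa_{\nu_0}w_0|_{|\pa B_*}^{-1}$, which I plan to establish by exploiting the fact that the degree-$1$ harmonics are the coordinate functions on $\Spn$. For the unnormalized harmonic $\tilde Y_{1,m}(\theta)=\theta_m$, differentiating $-\Delta u_0=\chi_{B_*}$ in $x_m$ gives $-\Delta(\pa_{x_m}u_0)=-\theta_m\,\mrestr{d\Hn}{\pa B_*}$, and the harmonicity of $u_0'(1)\,x_m$ together with the boundary trace $u_0'(1)\theta_m=\pa_{x_m}u_0|_{\pa B_1}$ yields the identification $U_{\tilde Y_{1,m}}=u_0'(1)\,x_m-\pa_{x_m}u_0$. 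An analogous manipulation, introducing the auxiliary state $Z\in H^1_0(B_1)$ solving $-\Delta Z=j''(u_0)\,x_m$, produces $W_{\tilde Y_{1,m}}=u_0'(1)\,Z-\pa_{x_m}w_0+w_0'(1)\,x_m$. Taking traces on $\pa B_*$ (where $\pa_{x_m}w_0=w_0'(r_*)\theta_m$ and $x_m=r_*\theta_m$) leads to the key formula
\[
\frac{1}{\lambda_1}-|\pa_{\nu_0}w_0|_{|\pa B_*}=u_0'(1)\,\tilde Z_1(r_*)+w_0'(1)\,r_*,
\]
where $\tilde Z_1$ denotes the radial profile of $Z/\theta_m$. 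Hopf's lemma applied to $u_0,w_0>0$ in $B_1$ gives $u_0'(1),w_0'(1)<0$, and the strong maximum principle applied to $Z$ on the half-ball $\{x_m>0\}$—where its source is strictly positive and $Z$ vanishes on the boundary by the antisymmetry in $x_m$—gives $\tilde Z_1(r_*)>0$. Both right-hand terms are therefore strictly negative, concluding the proof. The hardest part is precisely this last identification: $U_{\tilde Y_{1,m}}$ and $W_{\tilde Y_{1,m}}$ admit a representation as shifted partial derivatives of $u_0, w_0$ only because $k=1$ harmonics are linear, and it is this structure that converts the required spectral gap into a transparent sign check.
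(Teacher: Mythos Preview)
Your proof is correct and follows a genuinely different route from the paper's. For the monotonicity of $(\lambda_k)$, the paper argues via maximum principles applied to the radial ODEs satisfied by $\Phi_k:=\frac{1}{j''(u_0)}\mathcal D_r^k\varphi_k$ and $\varphi_k$ themselves (successively showing $\Phi_k\le 0$, $\varphi_k\ge 0$, $\Phi_{k+1}-\Phi_k\ge 0$, and finally $\varphi_{k+1}\le\varphi_k$), whereas you compute $U_k$ in closed form, use the self-adjoint representation $r_*^{\,n-1}/\lambda_k=\int_0^1 j''(u_0)\,U_k^2\,r^{n-1}\,dr$, and check the pointwise inequality $U_{k+1}<U_k$ directly. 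For the key strict inequality $\lambda_1>|\partial_\nu w_0|^{-1}$, the paper derives explicit integral expressions for both sides and compares the integrands; you instead exploit that degree-one harmonics are coordinate functions to identify $U_{\tilde Y_{1,m}}$ and $W_{\tilde Y_{1,m}}$ with $\partial_{x_m}u_0$, $\partial_{x_m}w_0$ corrected by harmonic extensions, thereby reducing the inequality to sign checks furnished by Hopf's lemma and the strong maximum principle on a half-ball. Your approach is conceptually attractive because it makes the role of infinitesimal translations transparent; the paper's computational route, on the other hand, yields explicit formulas and avoids introducing the auxiliary state $Z$, whose well-posedness under the bare hypothesis $j''(u_0)\in L^1(B_1)$ deserves a word (define it via the radial ODE for $\tilde Z_1$, where the integrals converge precisely by that hypothesis). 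One small remark: your appeal to Bernoulli for $2(1-\rho^\alpha)>\alpha\rho^\alpha(1-\rho^2)$ goes through after the substitution $1+x=\rho^{-2}$, which gives $2\rho^2(1-\rho^\alpha)\ge\alpha\rho^\alpha(1-\rho^2)$, and one then uses $\rho^2<1$ to obtain the strict inequality.
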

\begin{rem}
    \red{We observe that the constant function $Y_0$ is an eigenfunction for the operator $T$, but in general $\la_0$ does not satisfy~\eqref{eq:la>rho}. This highlights that for the coercivity it is necessary to take into account the volume constraint and to} work in the space $\Set{g\in L^2(\pa B_*)|\int_{\pa B_*}gd\Hn=0}$. \red{We point out that in~\cite[Lemma 36]{CMP23supp} the authors prove~\eqref{eq:la>rho} for $\la_0$ under the big mass regime $m\sim \abs{B_1}$.}
\end{rem}
\begin{proof}
    \begin{proofItemize}
        \item
              Let $k\geq 1$ be fixed. Let us define $\abs{x}=r$ and $\abs{x}^{-1}x=\theta$; in the following computations we will write $u_0(r)$ for $u_0(x)$ with $|x|=r$.  To show that $Y_{k,m}(\theta)$ are eigenfunctions for $T$, we look for a nontrivial function
              \[
                  W(x)=\vp(r)Y_{k,m}(\tht),
              \]
              solving
              \begin{equation}\label{eq:W}\left\{\begin{array}{ccl}(-\Delta)\left(\frac{1}{j''(u_0)}(-\Delta)\right)W=\lambda W\mrestr{d\Hn}{\pa B_*}&\textrm{ in }&B_1,\\[2mm]W=\left(\frac{1}{j''(u_0)}\right)\Delta W=0&\textrm{ on }&\pa B_1,\end{array}\right.
              \end{equation}
              for some $\lambda\in\R$.

              Let us denote by $\De_r$ and $\D_r^k$ the differential operators on functions of one variable, defined as
              \begin{equation}\label{eq:Dr}
                  \De_r \vp=r^{1-n}\pa_r (r^{n-1}\pa_r\vp), \qquad\quad\D_r^k\vp=\De_r\vp -\fr{\La_k}{r^2}\vp=r^{1-n-k}\pa_r\ml(r^{n-1+2k}\pa_r\ml(r^{-k}\vp\mr)\mr),
              \end{equation}
              so that
              \[
                  \De(\vp(r)Y_{k,m}(\tht))=\ml(\De_r +
                  \fr{1}{r^2}\De_{\Spn}\mr)(\vp(r)Y_{k,m}(\tht))=Y_{k,m}(\tht)\D_r^k\vp(r).
              \]
              As a consequence, we look for $\varphi$ solution of the ODE:
              \[\left\{\begin{array}{l}\D_r^k \ml(\fr{1}{j''(u_0)}\D_r^k\vp\mr)(r)=\lambda\varphi\delta_{r_*}\textrm{ in }(0,1),\\[2mm]\varphi(1)=\left(\frac{1}{j''(u_0)}\right)\D_r^k\varphi(1)=0\end{array}\right.
              \]
              and such that the behavior of $\varphi$ at 0 makes $W$ smooth enough near the origin.

              Using the second formulation of $\D_r^k$ in \eqref{eq:Dr}, we introduce four independent functions
              \[
                  \psi_{1}(r)=r^k, \qquad \qquad \psi_{2}(r)=r^{2-n-k}, \]
              \[\textrm{ and }\qquad\widetilde\psi_{\beta}(r)=r^k\int_{r_*}^r s^{-(n-1+2k)}\int_{r_*}^s j''(u_0(t))\, \psi_\beta(t)t^{n+k-1}\,dt \,ds,\qquad \qquad \beta=1,2,
              \]
              which respectively solve the equations
              \begin{equation}
                  \label{eq: psibeeq}
                  \D_r^k\psi_{\be} = 0, \qquad\qquad
                  \D_r^k\widetilde\psi_{\be} = j''(u_0)\psi_{\be}, \qquad \be=1,2.
              \end{equation}
              We now define $\vp$ in the following way
              \[
                  \vp(r)=\begin{dcases}
                      c_1^-\psi_1+c_2^-\psi_2 +c_3^-\widetilde\psi_1+c_4^-\widetilde\psi_2 & r\le r_*, \\
                      c_1^+\psi_1+c_2^+\psi_2 +c_3^+\widetilde\psi_1+c_4^+\widetilde\psi_2 & r>r_*.
                  \end{dcases}
              \]
              \begin{table*}[]
                \renewcommand{\arraystretch}{1.3} %
                \centering
                \begin{tabular}{@{}CCCCcCCCcC@{}}\toprule
                              &\multicolumn{3}{C}{\psi} &\phantom{abc} &\multicolumn{3}{C}{\fr{1}{j''(u_0)}\D_r^k\psi} &\phantom{abc} & \multicolumn{1}{C}{\pa_r(r^{-k}\psi)} \\ 
                              \cmidrule{2-4} \cmidrule{6-8} \cmidrule{10-10}
                              &r=0 &r=r_* &r=1 && r=0 &r=r_* &r=1 && \multicolumn{1}{C}{r=r_*} \\
                              \midrule
                              \psi_1 &0&r_*^k&1&&0&0&0&& 0\\
                              \psi_2 &+\infty& & &&0&0& && (2-n-2k)r_*^{1-n-2k}\\
                              \widetilde\psi_1 &0&0&\widetilde\psi_1(1)&&0&r_*^k&1&& 0\\
                              \widetilde\psi_2 & &0&\widetilde\psi_2(1)&&+\infty&r_*^{2-n-k}&1&& 0\\
                              \bottomrule
                            \end{tabular}
                \caption{Evaluation table for $\psi_\be$ and $\widetilde{\psi}_\be$. Empty cells are not needed for the computation.}
                \label{tab: evaluation}
              \end{table*}
              The behavior of $\varphi$ near $0,r_*$ and $1$ will add 7 independent conditions (see \Cref{tab: evaluation} for the values of $\psi_\be$ and its derivatives):
              \begin{itemize}
                  \item near 0, we require $\varphi(0)=0$: indeed, if this is not the case, as $Y_{k,m}$ is not constant, then $\varphi(r)Y_{k,m}(\theta)$ cannot be continuous at 0. Similarly, we require $\D_r^k\varphi(0)=0$ so that $\Delta W$ is continuous at 0.

                        As $\D_r^k\psi_1(0)=\D_r^k\psi_2(0)=\D_r^k\widetilde\psi_1(0)=0$ while $\D_r^k\psi_2(0)=+\infty$, the first condition is $c_4^-=0$.

                        And as $\psi_1(0)=\widetilde{\psi}_1(0)=0$ while $\psi_2(0)=+\infty$, the second condition reads $c_2^-=0$.
                  \item near $r_*$, we require three continuity conditions, namely for $\varphi, \:\pa_r(r^{-k}\varphi)$ and $\frac{\D^k_r\varphi}{j''(u_0)}$: as we have chosen $\widetilde{\psi}_\beta$ so that they vanish at $r_*$, these respectively lead to the three equations:
                        \[
                         \begin{cases}
	       c_1^-=c_1^+                                             \\[2mm]
                                0=c_2^-=c_2^+\\[2mm]
                                c_3^-r_*^k=c_3^+r_*^k+c_4^+r_*^{2-n-k}
\end{cases}
                        \]
                  \item finally, the two boundary conditions $\varphi(1)=\frac{1}{j''(u_0)}\D_r^k\varphi(1)=0$ write:
                        \[
                             \begin{cases}
	   c_1^+ + c_3^+\widetilde{\psi}_1(1)+c_4^+\widetilde{\psi}_2(1)=0 \\[2mm]
                                c_3^++c_4^+=0
                                
\end{cases}
                        \]
                        where \red{$\widetilde\psi_1(1)$ and $\widetilde\psi_{2}(1)$ are well defined and finite because $j''(u_0)\le C u_0^{-\al}$, which is integrable because $u_0$ decays linearly to 0 near $\pa B_1$.}
              \end{itemize}
              We therefore have 7 linear conditions for 8 variables, so there is at least a one dimensional vectorial space of solutions, and we choose $\varphi$ a nontrivial one. Notice also that the 7 conditions are independent: if $c_1^-=0$, then all the constants $c_i^{\pm}$ need to be zero, as by direct computation ($t^{k}<t^{2-n-k}$ for $t\in(0,1)$)     
              \[
                \widetilde\psi_{1}(1)-\widetilde\psi_2(1)<0.
              \]
              To conclude that $W=\varphi(r)Y_{k,m}(\theta)$ is a solution to \eqref{eq:W}, it remains to observe that $\varphi(r_*)\neq 0$, so that
              \[\lambda:=\frac{\left[\pa_r\left(\frac{\D_r^k\varphi}{j''(u_0)}\right)\right](r_*)}{\varphi(r_*)}\]
              is well defined, where
              $[v](r_*)$ denotes the jump of the function $v$ at $r_*$; this is the case as $\varphi(r_*)=c_1^-r_*^k$, and we have just observed that if $c_1^-=0$, then $\varphi\equiv0$.

        \item We now prove the monotonicity of $(\lambda_k)_k$:\\
              For $k\geq 1$, let $\varphi_k$ one of the solutions computed in the previous item. We choose the scaling so that  \begin{equation}\label{eq:scaling}\lambda_k\varphi_k(r_*)=1.
                  \end{equation}
              
              We make the proof in two steps: first we show that $\vp_k\ge 0$, and then we study the difference $\psi_k:=\vp_{k+1}-\vp_k$:

              \textbf{Step 1: }let
              \[
                  \Phi_k=\fr{1}{j''(u_0)}\D_r^k\vp_k,
              \]
              where $\varphi_k$ is one solution computed in the previous item, and
              so that
              \begin{equation}
                  \label{eq: ODEPhik}
                  \begin{dcases}
                      \De_r \Phi_k=\fr{\La_k}{r^2}\Phi_k+\delta_{r_*} & \textrm{ in }(0,1), \\[5 pt]
                      {\Phi_k(0)=}\Phi_k(1)=0,
                  \end{dcases}
              \end{equation}
              where $\de_{r_*}$ denotes the Dirac mass concentrated in $r_*$. We claim that $\Phi_k\le 0$. Indeed, let us assume that $\bar{r}$ is a maximum point for $\Phi_k$. If $\bar{r}=1$ {or $\bar{r}=0$}, the claim is proved. On the other hand, the Dirac mass in \eqref{eq: ODEPhik} implies that
                  \[
                      \Phi_k'(r_*^+)-\Phi_k'(r_*^-)=\lambda_k\varphi_k(r_*)>0,
                  \]
              which ensures that $r_*$ cannot be a maximum point. Therefore, we may evaluate the ODE \eqref{eq: ODEPhik} in the maximum point $\bar{r}$ obtaining, after the expansion of $\De_r$,
              \[
                  0\ge \Phi''_k(\bar{r})=\De_r \Phi_k(\bar{r})=\fr{\La_k}{\bar{r}^2}\Phi_k(\bar{r}),
              \]
              which proves the claim.

              Since $\Phi_k\le 0$, then
              \[
                  \begin{dcases}
                      \De_r \vp_k\le \fr{\La_k}{r^2}\vp_k & \qquad r\in(0,1), \\[5 pt]
                      {\vp_k(0)=}\vp_k(1)=0.
                  \end{dcases}
              \]
              With the same argument as above for $\Phi_k$, simplified by the fact that $\vp_k$ is of class $C^2$ in the whole interval $(0,1)$, we get (by looking at its minimum) that  $\vp_k\ge 0$.

              \noindent\textbf{Step 2:} let $\psi_k:=\vp_{k+1}-\vp_k$, and let $\Psi_k:=\Phi_{k+1}-\Phi_k$, and note that both are $C^2(0,1)$. We claim that $\Psi_k\ge 0$. First we notice that, since $\Phi_k\le 0$ and $\Lambda_k<\Lambda_{k+1}$,
              \[
                  \ml(\De_r-\fr{\La_{k+1}}{r^2}\mr)\Phi_{k+1}=\delta_{r_*}=\ml(\De_r-\fr{\La_k}{r^2}\mr)\Phi_k\le \ml(\De_r-\fr{\La_{k+1}}{r^2}\mr)\Phi_k,
              \]
              so that
              \begin{equation}
                  \label{eq: Psik}
                  \ml(\De_r-\fr{\La_{k+1}}{r^2}\mr)(\Psi_{k})\le 0.
              \end{equation}
              As done for $\vp_k$ in Step 1, \eqref{eq: Psik} ensures that $\Psi_k\ge 0$.

              Finally, $\Psi_k\ge0$ reads
              \[
                  0\le \ml(\De_r-\fr{\La_{k+1}}{r^2}\mr)\vp_{k+1}-\ml(\De_r-\fr{\La_{k}}{r^2}\mr)\vp_k\le\ml(\De_r-\fr{\La_{k+1}}{r^2}\mr)(\vp_{k+1}-\vp_k),
              \]
              where we have used in the last inequality $\La_k\vp_k\le \La_{k+1}\vp_k$. As before, this implies $\vp_{k+1}\le \vp_k$, which because of \eqref{eq:scaling} leads to $\lambda_k\le\lambda_{k+1}$.
        \item
              We can now compute the eigenvalue $\la_1$: we drop the exponent $k$ in the notations as its value will remain 1 in this paragraph. Using equations \eqref{eq: psibeeq} and $c_2^-=c_4^-=0$, we obtain
              \[
                  \la_1 =\fr{\ml[\pa_r \ml(\frac{\D_r\vp_1}{j''(u_0(r))}\mr)\mr](r_*)}{\vp_1(r_*)} \\[7 pt]
                  =\fr{c_3^+-c_3^-+\frac{1-n}{r_*^n}c_{4}^+}{c_1^-r_*}.
              \]
              Let us recall the previous system when $k=1$:
              \[
              \begin{cases}
                    c_2^-=c_2^+=c_4^-=0 \\[2mm]
                      c_1^-=c_1^+                \\[2mm]
                      c_3^-r_*=c_3^+r_*+c_4^+r_*^{1-n}                                    \\[2mm]
                        c_1^+ +c_3^+ \widetilde{\psi}_1(1)+ c_4^+\widetilde{\psi}_2(1)=0 \\[2mm]
                      c_3^++c_4^+=0
                      
\end{cases}
              \]
              From the third equation, we compute $c_3^+-c_3^-$ in terms of $c_4^+$ and therefore
              \[\lambda_1=-\frac{n}{r_*^{n+1}}\frac{c_4^+}{c_1^-}.\]
              The second equation joint with the fourth one and the fifth one expresses $c_1^-$ in terms of $c_4^+$, which eventually leads to
              \begin{equation}
                  \label{eq: rhola1}
                  \la_1=\frac{n}{r_*^{n+1}\Big[\widetilde{\psi}_2(1)-\widetilde{\psi}_1(1)\Big]}.
              \end{equation}
              Now observe that by Fubini's theorem
              \begin{equation}
                  \label{eq: psibe1}
                  \begin{split}
                    \widetilde{\psi}_2(1)-\widetilde{\psi}_1(1) & =\int_{r_*}^1 s^{-n-1}\int_{r_*}^s j''(u_0(t))\,(t^{1-n}-t)t^{n}\,dt\,ds
                       \\[7 pt]
                       &=\int_{r_*}^1 j''(u_0(t))\,t(1-t^n)\int_{t}^1s^{-n-1}\,ds\,dt
                        \\[7 pt]
                                              & =\fr{1}{n}\int_{r_*}^1j''(u_0(t)) \, t^{1-n}(1-t^n)^2\,dt                 
                  \end{split}
              \end{equation}
              This leads to
              \begin{equation}
                  \label{eq: la>1}
                  \fr{1}{\la_1}=\fr{1}{n^2 r_*^{n-1}}\int_{r_*}^1 j''(u_0(t))\, r_*^{2n}\,(1-t^n)^2\,t^{1-n}\,dt
              \end{equation}

On the other hand, we compute $|\partial_\nu w_0|_{|\partial B_*}$: we first notice
\[
\begin{split}
    \left|\frac{\pa w_0}{\pa \nu}\right|_{|\pa B_*}
    &=\fr{1}{P(B_*)}\int_{\pa B_*}\ml(-\frac{\pa w_0}{\pa \nu}\mr)\,d\Hn = \fr{1}{P(B_*)}\int_{B_*}\ml(-\De w_0\mr)\,dx \\
    &=\fr{1}{P(B_*)}\int_{B_1}(-\De u_0)j'(u_0)\,dx \ge\fr{1}{P(B_*)}\int_{B_1}j''(u_0)\abs{\na u_0}^2\,dx,
\end{split}
\]
as $j'(u_0)\in W^{1,1}(B_1)$ and $j'\ge 0$.
Then, identifying $u_0(x)=u_0(\abs{x})$, we observe
that $\na u_0(x)=\pa_r u_0(\abs{x})$ and that $u_0$ solves
\[
\begin{dcases}
    -\pa_r (r^{n-1}\pa_r u_0(r))=r^{n-1}\chi_{(0,r_*)}(r) \qquad r\in(0,1), \\[7pt]
    \pa_r u_0(0)=0,\\[7pt]
    u_0(1)=0.
\end{dcases}
\]
Hence,
\[
\pa_r u_0(r)=-r^{1-n}\int_0^{r\wedge r_*}s^{n-1}\,ds=-\fr{(r\wedge r_*)^{n}}{nr^{n-1}}\,,
\]
which implies 
\begin{equation}
\label{eq: explicitrho}
    |\pa_\nu w_0|_{|\pa B_*}\ge \fr{1}{n^2 r_*^{n-1}}\int_0^1 j''(u_0(t))(t\wedge r_*)^{2n}\, t^{1-n}\,dt.
\end{equation}
By comparing the integrands in \eqref{eq: la>1} and \eqref{eq: explicitrho}, we easily conclude $\la_1>(|\partial_\nu w_0|_{|\pa B_*})^{-1}$.
\end{proofItemize}
\end{proof}

\subsubsection*{Conclusion of the proof of \Cref{prop: eigentostab}}
\begin{proof}[Proof of \Cref{prop: eigentostab}]
\begin{proofItemize}
    \item    In the case $n=1$, $L^2(\pa B_*)$ is a two dimensional space, $B_*^\Phi$ is an interval, and because we assume $|B_*^\Phi|=|B_*|$, we can restrict to $\Phi$ being a translation, in which case $g(x)=\sgn(x)\al$ for some $\al\in(r_*-1,1-r_*)$. Let us denote by $W=W_{\sgn}$. We notice that the functions $W$ and $\pa_\nu w_0$ are odd, so that the shape derivative computed in \Cref{cor: lagrangian} reads 
        \[            \Ll_\tau''(B_*)[\Phi,\Phi]=l_2(g,g)=2\al^2\ml(W(r_*)-\abs{\pa_\nu w_0}(r_*)\mr),
        \]
        Finally, reproducing the proof of \eqref{eq:la>rho} in \Cref{prop: computeigenv} we get that $W(r_*)<\abs{\pa_\nu w_0}(r_*)$, and this gives 
        \[\Ll_\tau''(B_*)[\Phi,\Phi]\leq -c\|\Phi\cdot\nu_0\|^2_{L^2(\pa B_*)}.\]
\item Assume now $n\geq 2$.    Let $\Phi$ be as in the proposition. We denote $g=\Phi\cdot{\nu_0}_{|\pa B_*}$, and recall that
\[
\Ll_\tau''(B_*)[\Phi,\Phi]=l_2(g,g)=\int_{\pa B_*}gTg-|\partial_\nu w_0|_{|B_*}\int_{\pa B_*}g^2.
\]
From the volume constraint, if $\|g\|_\infty\leq \eta$ then
\[
    0=\fr{1}{n}\int_{\pa B_*}\ml((1+g)^n-1\mr)\,d\Hn=\al_0+\fr{1}{n}\sum_{k=2}^n \binom{n}{k}\int_{\pa B_*}g^k\,d\Hn\ge \abs{\al_0}-C\eta\norm{g}_2,
\]
where $\alpha_0=\int_{\pa B_*}g$. 
We define $\widetilde{g}=g-\al_0$ and we first study $l_2(\widetilde{g},\widetilde{g})$:
as $(Y_{k,m})_{k\in\N^*, 1\le m\le M(k)}$ is an orthogonal basis of $\set{h\in L^2(\pa B_*) | \int_{\pa B_*}hd\Hn=0}$, we can  
 decompose $\widetilde{g}=\sum_{k\geq 1,m} \al_{k,m} Y_{k,m}$ and we get from \Cref{prop: computeigenv}:
\[\begin{split}
        l_2(\widetilde{g},\widetilde{g})&=\sum_{k\geq 1,m}\frac{1}{\la_k}\alpha_{k,m}^2\|Y_{k,m}\|^2_{L^2(\pa B_*)}-|\pa_\nu w_0|_{|\pa B_*}\sum_{k\geq 1,m}\alpha_{k,m}^2\|Y_{k,m}\|^2_{L^2(\pa B_*)}\\&\le \mleft(\frac{1}{\la_1}-|\pa_\nu w_0|_{|\pa B_*}\mright)\norm{\widetilde{g}}_{L^2(\pa B_*)}^2,
        \end{split}
\]
so from \eqref{eq:la>rho} there exists $c>0$ such that
\[l_2(\widetilde{g},\widetilde{g})\leq -c\|\widetilde{g}\|^2_{L^2(\pa B_*)}.\]
Now, as $l_2$ is continuous on $L^2(\pa B_*)$, we get
\begin{eqnarray*}\Ll_\tau''(B_*)[\Phi,\Phi]=l_2(g,g)&=&l_2(\widetilde{g},\widetilde{g})+2l_2(g,\al_0)-l_2(\al_0,\al_0)\\&\leq& -c\|g-\alpha_0\|_{L^2(\pa B_*)}^2+2|\al_0|\|g\|_{L^2(\pa B_*)}+|\al_0|^2
\end{eqnarray*}
and we conclude using $\|g-\al_0\|_{L^2(\pa B_*)}\geq \|g\|_{L^2(\pa B_*)}-|\al_0|$ and $|\al_0|\leq C\eta \|g\|_{L^2(\pa B_*)}$ with $\eta>0$ small enough.
\end{proofItemize}
\end{proof}

\subsection{Step 3: improved continuity of the second order shape derivative}\label{ssect:continuity}
\label{sec: imprcont}
We introduce here some useful notations.
\begin{defi}
    Let $X,Y$ be two normed vector spaces, and 
        $J: X\to Y$.
    We write
    \[
        J(x)=\om_Y^X(x)
    \]
    to indicate that 
    \[
        \lim_{\norm{x}_X\to 0}{\norm{J(x)}_Y}=0.
    \]
    In particular, when $Y=\R$ we only write $\om_Y^X=\om^X$. Moreover, when $Y=W^{k,p}(B_1;\R^n)$ and $X=W^{j,q}(B_1;\R^n)$ then we write 
    \[
        \om_Y^X=\om_{k,p}^{j,q}.
    \]
    When $Y=L^p(B_1)$ we write $\om_Y^X=\om_{p}^X$.

    Also, when $J:[0,1]\times X\to Y$ depends on the extra parameter $t\in[0,1]$, $J(x)=\omega_Y^X(x)$ is meant in a uniform sense in $t$, namely $\lim_{\|x\|_X\to 0}\sup_t\|J(t,x)\|_Y=0$.
\end{defi}  

The main aim of this section is to prove the following.
\begin{prop}
\label{prop: improvcontin}
   Let $m\in(0,\abs{B_1})$, $q>n$ and $j\in C^1(\R_+)\cap C^2((0,\infty))$ such that 
   \[      
    \red{\lim_{s\to0^+}s^\al j''(s)\in (0,+\infty)}
\]
\red{for some $\alpha\in(-\infty,1)$.} Then
  \[L''(t)=L''(0)+\om^{2,q}(\Phi)\,\norm{\Phi}^2_{L^2(\pa B_*)},\]
  that is to say, for every $\eps>0$ there exists $\eta>0$ such that for every $t\in[0,1]$ and for every $\Phi\in W^{2,q}(B_1;\R^n)$ orthogonal to $\pa B_*$ such that $\norm{\Phi}_{2,q}\le \eta, $ then 
    \[
       \qquad\qquad \abs{L''(t)-L''(0)}\le \eps \norm{\Phi}_{L^2(\pa B_*)}^2,
    \]
    where $L$ is defined in \autoref{cor: lagrangian}.
\end{prop}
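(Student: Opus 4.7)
The plan is to start from the explicit formula \eqref{eq: L''t} of \Cref{cor: lagrangian} for $L''(t)$, pull back every surface integral from $\pa B_*^{t\Phi}$ to $\pa B_*$ via the diffeomorphism $\id+t\Phi$, and estimate each resulting term in $L''(t)-L''(0)$. The pullback produces a tangential Jacobian $\widehat{J}_t=\jac^{\pa B_*}(\id+t\Phi)$ and a pulled-back outer normal $\widehat{\nu}_t=\nu_t\circ(\id+t\Phi)$, which satisfy
\[
\|\widehat{J}_t-1\|_{L^\infty(\pa B_*)}+\|\widehat{\nu}_t\cdot\nu_0-1\|_{L^\infty(\pa B_*)}=\om^{2,q}(\Phi),
\]
uniformly in $t\in[0,1]$, from classical smooth formulas and the embedding $W^{2,q}(B_1)\hookrightarrow L^\infty(B_1)$. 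Since $\tPhi_t=\Phi$ and $\Phi$ is normal on $\pa B_*$, the radial extension of $\nu_0$ satisfies $\nu_0\circ(\id+t\Phi)=\nu_0$ and $\Phi\circ(\id+t\Phi)=\Phi$ on $\pa B_*$, so in particular $\widehat{g}=g$ on $\pa B_*$.

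The next step is continuity in $L^\infty(\pa B_*)$ of the ``regular'' coefficients appearing in the integrand. By \Cref{prop: w'}, the map $\Psi\mapsto w_\Psi$ is $C^1$ from $W^{2,q}(B_1;\R^n)$ into $W^{2,q}(B_{1-\eps})$ near $\Psi=0$, and Sobolev embedding (for $q>n$) gives
\[
\|\widehat{w}_t-w_0\|_{L^\infty(\pa B_*)}+\|\widehat{\nabla w_t}-\nabla w_0\|_{L^\infty(\pa B_*)}=\om^{2,q}(\Phi),
\]
uniformly in $t\in[0,1]$. Combined with the choice $\tau=-w_0|_{\pa B_*}$ from \Cref{cor: lagrangian} and $\|\Phi\|_{L^\infty}\le C\|\Phi\|_{2,q}$, this also yields $\|\widehat{w}_t+\tau\|_{L^\infty(\pa B_*)}=\om^{2,q}(\Phi)$.

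The delicate ingredient is the term involving $w'_t$. Because $w'_t$ depends linearly on $\Phi$ with $\|w'_t\|_{W^{2,q}(B_{1-\eps})}\le C\|\Phi\|_{W^{2,q}}$, an $L^\infty$-bound with rate $\om^{2,q}(\Phi)$ would only give an estimate of order $\om^{2,q}(\Phi)\|\Phi\|_{W^{2,q}}\|g\|_{L^2(\pa B_*)}$, which is insufficient. Instead, I plan to view the map $T_{t\Phi}:g\in L^2(\pa B_*)\mapsto w'_t|_{\pa B_*}\in L^2(\pa B_*)$ as a bounded linear operator ($T_0$ being the operator analyzed in \Cref{prop: computeigenv}) and to prove $\|T_{t\Phi}-T_0\|_{L^2\to L^2}=\om^{2,q}(\Phi)$ uniformly in $t$, which delivers
\[
\|\hwp-w'_0\|_{L^2(\pa B_*)}\le\om^{2,q}(\Phi)\,\|g\|_{L^2(\pa B_*)}.
\]
This operator continuity will follow from a factorization $T_{t\Phi}=R\circ M_{u_t}\circ S_{t\Phi}$, where $S_{t\Phi}:g\mapsto u'_t$ is continuous in $t\Phi$ because the moving surface $\pa B_*^{t\Phi}$ varies smoothly and the density pulled back to $\pa B_*$ equals $g(\widehat{\nu}_t\cdot\nu_0)\widehat{J}_t=g(1+\om^{2,q}(\Phi))$; $M_{u_t}:\phi\mapsto j''(u_t)\phi$ is uniformly bounded between suitable $L^p$-spaces and converges to $M_{u_0}$ in operator norm thanks to $u_t\to u_0$ in $L^\infty$ (\Cref{prop: u'}), continuity of $j''$ on $(0,\infty)$, and the integrability of $j''(u_0)$ on $B_1$ implied by \eqref{eq: j''summablebis}; and $R$ is the trace of $(-\Delta)^{-1}$ on $\pa B_*$, independent of $t\Phi$.

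Putting everything together, $L''(t)-L''(0)$ decomposes after pullback into integrals of the form $\int_{\pa B_*}[A_t(y)-A_0(y)]g(y)^2d\Hn(y)$, with $\|A_t-A_0\|_{L^\infty(\pa B_*)}=\om^{2,q}(\Phi)$ by the second step, together with a term of the form $\int_{\pa B_*}[\hwp\,(\widehat{\nu}_t\cdot\nu_0)\widehat{J}_t-w'_0]\,g\,d\Hn$, handled by the third step, Cauchy-Schwarz, and the $L^2$-bound $\|w'_0\|_{L^2(\pa B_*)}\le C\|g\|_{L^2(\pa B_*)}$ coming from boundedness of $T_0$. The total is thus bounded by $\om^{2,q}(\Phi)\|g\|_{L^2(\pa B_*)}^2=\om^{2,q}(\Phi)\|\Phi\|_{L^2(\pa B_*)}^2$. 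The main obstacle in this plan is establishing the $L^2$-operator-norm continuity of $T_{t\Phi}$: the continuity of $S_{t\Phi}$ requires controlling a moving surface of support, and the continuity of $M_{u_t}$ is complicated by the possible singularity of $j''$ near $\pa B_1$ where $u_t$ is close to $0$. The key point is that traces are ultimately taken on $\pa B_*$, which is compactly contained in $B_1$, so the singularity of $j''$ near $\pa B_1$ only enters through the integrability assumption \eqref{eq: j''summablebis}.
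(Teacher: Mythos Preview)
Your overall architecture coincides with the paper's: pull every integral back to $\pa B_*$ via $\id+t\Phi$, control the tangential Jacobian and pulled-back normal by \Cref{lem: geomestim}, handle the ``regular'' coefficients $\widehat{w}_t$, $\widehat{\nabla w_t}$, $\widehat{w}_t+\tau$ by the $C^1$--differentiability of $\Phi\mapsto w_\Phi$ into $W^{2,q}(B_{1-\eps})$ (\Cref{prop: w'}), and isolate the $w'_t$ contribution for a separate estimate. The paper organises this as Step~1 (the $b_t$ term, using $\widehat{w}_t=w_0+\om^{2,q}_{1,\infty}(\Phi)$) and Step~2 (splitting the remainder into a term $I_1$ carrying $\widehat{w'_t}$ and a term $I_2$ carrying $\widehat{\nabla w_t}$), exactly along your lines.

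The genuine difference is in the $w'_t$ term. The paper does \emph{not} factor $T_{t\Phi}$ abstractly; it proves two concrete $H^1$ energy estimates (\Cref{lem: hupestim} and \Cref{lem: hwpestim}): after pullback, $\widehat{u'_t}$ and $\widehat{w'_t}$ solve divergence-form equations with matrix $A_t=I_n+\om^{2,q}_\infty(\Phi)$, and testing against $\widehat{u'_t}-u'_0$ (resp.\ $\widehat{w'_t}-w'_0$) yields directly
\[
\|\widehat{u'_t}-u'_0\|_{H^1_0(B_1)}+\|\widehat{w'_t}-w'_0\|_{H^1_0(B_1)}=\om^{2,q}(\Phi)\,\|\Phi\|_{L^2(\pa B_*)},
\]
and one then traces onto $\pa B_*$. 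This is of course equivalent to your operator-norm continuity, but it avoids having to name the intermediate spaces.

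The one point in your plan that needs sharpening is the choice of spaces for $M_{u_t}:\phi\mapsto j''(u_t)\phi$. ``Suitable $L^p$-spaces'' will not close the loop: the only $L^\infty$ control on $u'_t$ scales with $\|\Phi\|_{W^{2,q}}$, not with $\|g\|_{L^2(\pa B_*)}$, so an $L^\infty\!\to\!L^1$ factorisation loses the quadratic gain. The correct pair is $H^1_0(B_1)\to H^{-1}(B_1)$: since $u_t\ge c\,d(\cdot,\pa B_1)$ uniformly for small $\Phi$ and \eqref{eq: j''summablebis} gives $j''(u_t)\le C\,d(\cdot,\pa B_1)^{-\alpha}$, Hardy's inequality yields $\int_{B_1}j''(u_t)\phi\psi\le C\|\phi\|_{H^1_0}\|\psi\|_{H^1_0}$, hence $M_{u_t}\in\mathcal L(H^1_0,H^{-1})$ uniformly, with $\|M_{u_t}-M_{u_0}\|\to 0$ by dominated convergence. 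With $S_{t\Phi}:L^2(\pa B_*)\to H^1_0(B_1)$ (this is exactly \Cref{lem: hupestim}, and really does require pulling back the moving-surface measure and comparing the resulting variable-coefficient equation to the one for $u'_0$) and $R=\tr_{\pa B_*}\circ(-\Delta)^{-1}:H^{-1}\to L^2(\pa B_*)$, your factorisation then goes through---but once the spaces are fixed this way, you have essentially rewritten the paper's two lemmas in operator language.
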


As explained in the introduction of the proof of \autoref{prop: eigentostab}, it is not restrictive to assume $\Phi$ to be constant in the normal direction to $\pa B_*$ (in a neighborhood of $\pa B_*$), which allows to apply \autoref{cor: lagrangian}, leading to the following formula:

\begin{equation}
\label{eq: L''Bt}
 L''(t)=\int_{\pa B_*}J_t^\tau \ml(\widehat{w'_t} g+g^2\widehat{\na w_t}\cdot\hnu_t\mr)\,d\Hn+b_t(\Phi,\Phi),
\end{equation}
where for every function $h$ we have denoted by $\widehat{h}=h\circ(\id+t\Phi)$, $J_t^\tau=\jac^{\pa B_*}(\id +t\Phi)$, $g=\Phi\cdot\nu_0$ and
\begin{equation}
\label{eq: bt}
    b_t(\Phi,\Phi)=\int_{\pa B_*} J_t^\tau(\hw_t-w_0)(\hnu_t\cdot\nu_0) g^2\, \widehat{\divv(\nu_0)}\,d\Hn.
\end{equation}
Notice also that we used $\widehat{g}=g$ and $\widehat{\nu_0}=\nu_0$.
To prove \Cref{prop: improvcontin} we need some geometric estimates on $\hnu_t$ and $I_n+tD\Phi$, and estimates on $\hw_t$ and $\widehat{w'_t}$ that will be resumed in the following lemmas. 
\begin{lemma}[{\cite[Lemma 4.3, Lemma 4.7]{DL19},\cite[Lemma 3.7]{P24}}]
\label{lem: geomestim}Let $q>n$ and $\Phi\in W^{2,q}(B_1;\R^n)$. Then
    \begin{gather*}
        (I_n+tD\Phi)^{-1}=I_n+\om_\infty^{2,q}(\Phi) \qquad \qquad
        \det(I_n+tD\Phi)=1+\om_\infty^{2,q}(\Phi) \\[5 pt]
        \hnu_t=\nu_0+\om_\infty^{2,q}(\Phi), \qquad \qquad 
        J^\tau_t=1+\om_\infty^{2,q}(\Phi),
    \end{gather*}
   
\end{lemma}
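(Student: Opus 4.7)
The plan is to reduce every estimate to a single pointwise bound, namely that for $q>n$ the Sobolev embedding $W^{2,q}(B_1;\R^n)\hookrightarrow C^{1,s}(\overline{B_1};\R^n)$ (for some $s=s(n,q)\in(0,1)$) gives
\[
\|D\Phi\|_{L^\infty(B_1)}\le C(n,q)\,\|\Phi\|_{W^{2,q}},
\]
so that shrinking the neighborhood of $0$ we may assume $\|tD\Phi\|_\infty\le \|D\Phi\|_\infty\le 1/2$ for every $t\in[0,1]$. Then the four claims become purely algebraic/continuity assertions about the $n\times n$ matrix $M_t(x):=I_n+tD\Phi(x)$, which is uniformly invertible.

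For the first estimate, the Neumann series
\[
M_t^{-1}=I_n+\sum_{k=1}^{\infty}(-tD\Phi)^k
\]
converges absolutely in $L^\infty$ and yields $\|M_t^{-1}-I_n\|_\infty\le \|D\Phi\|_\infty/(1-\|D\Phi\|_\infty)\le 2\|D\Phi\|_\infty$, which goes to $0$ with $\|\Phi\|_{W^{2,q}}$, uniformly in $t\in[0,1]$. For the second estimate, $\det M_t$ is a polynomial in the entries of $tD\Phi$ with constant term equal to $1$, so $|\det M_t-1|$ is bounded in $L^\infty$ by a polynomial with no constant term in $\|D\Phi\|_\infty$, again uniformly in $t$.

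The last two statements follow from the standard area-formula identities for the map $\id+t\Phi$ restricted to $\pa B_*$: on $\pa B_*$ one has
\[
\hnu_t=\frac{M_t^{-T}\nu_0}{|M_t^{-T}\nu_0|},\qquad J_t^\tau=|\det M_t|\,\bigl|M_t^{-T}\nu_0\bigr|.
\]
Both expressions are continuous functions of the matrix-valued argument $M_t$ in a neighborhood of $I_n$, and they reduce to $\nu_0$ and $1$ respectively at $M_t=I_n$. Combining this continuity with the first two estimates already established yields $\hnu_t=\nu_0+\om_\infty^{2,q}(\Phi)$ and $J_t^\tau=1+\om_\infty^{2,q}(\Phi)$.

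The main obstacle is essentially nonexistent: each claim is a continuity statement in finite-dimensional linear algebra reduced to an $L^\infty$ bound on $D\Phi$ via Sobolev embedding. The only point worth underlining is uniformity in $t\in[0,1]$, which is automatic because all the bounds we use are monotone in $t$ (or hold a fortiori at $t=1$), consistent with the uniform-in-$t$ reading of the symbol $\om_\infty^{2,q}$ specified in the definitions of this section.
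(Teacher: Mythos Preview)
Your proof is correct. The paper does not provide its own proof of this lemma, citing instead \cite[Lemmas 4.3 and 4.7]{DL19} and \cite[Lemma 3.7]{P24}; your argument is the standard one and is essentially what appears in those references (Sobolev embedding to control $D\Phi$ in $L^\infty$, Neumann series for the inverse, and the cofactor formulas $\hnu_t=M_t^{-T}\nu_0/|M_t^{-T}\nu_0|$ and $J_t^\tau=|\det M_t|\,|M_t^{-T}\nu_0|$ for the transformed normal and tangential Jacobian).
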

\begin{lemma}
\label{lem: hupestim}
    Let $m\in(0,\abs{B_1})$, $\Phi\in W^{2,q}(B_1;\R^n)$ with $q>n$. We denote $u_t:=u_{B_*^{t\Phi}}$, and $\hup=u'_t\circ(\id+t\Phi)$. Then there exist constants $C=C(m,n,q)$ and $\de=\de(m,n,q)$ such that if $\norm{\Phi}_{2,q}\le \de$ then
    \begin{equation}
    \label{eq: hupestim}
        \norm{\hup}_{1,2}\le C\norm{\Phi}_{L^2(\pa B_*)}.
    \end{equation}
    Moreover, we have
    \begin{equation}
    \label{eq: hupcont}
        \hup=u'_0+\om_{1,2}^{2,q}(\Phi)\norm{\Phi}_{L^2(\pa B_*)}.
    \end{equation}
\end{lemma}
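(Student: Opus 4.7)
The plan is to derive the PDE satisfied by $\hup$ through a change of variables in the weak formulation~\eqref{eq: u'}, and then exploit coercivity. Recall from \Cref{prop: u'} that $u_t'$ solves, in the weak sense,
\[
\int_{B_1} \na u_t'\cdot\na\varphi\,dy=\int_{\pa B_*^{t\Phi}}(\tphi_t\cdot\nu_t)\,\varphi\,d\Hn,\qquad\forall\varphi\in H^1_0(B_1).
\]
Performing the change of variables $y=(\id+t\Phi)(x)$, noting that $\tphi_t\circ(\id+t\Phi)=\Phi$ and $\nu_t\circ(\id+t\Phi)=\hnu_t$, and setting $\hat\varphi=\varphi\circ(\id+t\Phi)$, the equation becomes
\begin{equation}\label{eq:pullback}
\int_{B_1}A_{t\Phi}\na\hup\cdot\na\hat\varphi\,dx=\int_{\pa B_*}(\Phi\cdot\hnu_t)\,J_t^\tau\,\hat\varphi\,d\Hn,
\end{equation}
where $A_{t\Phi}=J_{t\Phi}(I_n+tD\Phi)^{-1}(I_n+tD\Phi)^{-T}$ is the matrix already used in \Cref{prop: u'}. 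Since $\hat\varphi$ ranges over all of $H^1_0(B_1)$ as $\varphi$ does, \eqref{eq:pullback} characterises $\hup$.

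For \eqref{eq: hupestim}: by \Cref{lem: geomestim}, when $\|\Phi\|_{2,q}\le\delta$ the matrix $A_{t\Phi}$ is uniformly coercive, so testing \eqref{eq:pullback} with $\hat\varphi=\hup$ and using the trace inequality $\|\hup\|_{L^2(\pa B_*)}\le C\|\hup\|_{H^1}$ yields
\[
c\,\|\na\hup\|_{L^2}^2\le C\,\|\Phi\|_{L^2(\pa B_*)}\,\|\hup\|_{H^1},
\]
whence \eqref{eq: hupestim} follows from Poincaré's inequality.

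For \eqref{eq: hupcont}: I set $v_t:=\hup-u_0'$ and subtract from~\eqref{eq:pullback} the weak formulation of $-\De u_0'=g\,d\Hn\measurerestr\pa B_*$ with $g=\Phi\cdot\nu_0$, which I rewrite as $\int A_{t\Phi}\na u_0'\cdot\na\hat\varphi=\int_{\pa B_*}g\hat\varphi\,d\Hn+\int(A_{t\Phi}-I_n)\na u_0'\cdot\na\hat\varphi$. This gives
\[
\int_{B_1}A_{t\Phi}\na v_t\cdot\na\hat\varphi\,dx=\int_{\pa B_*}\bigl[(\Phi\cdot\hnu_t)J_t^\tau-\Phi\cdot\nu_0\bigr]\hat\varphi\,d\Hn-\int_{B_1}(A_{t\Phi}-I_n)\na u_0'\cdot\na\hat\varphi\,dx.
\]
\Cref{lem: geomestim} gives $\hnu_t J_t^\tau-\nu_0=\om^{2,q}_\infty(\Phi)$ and $A_{t\Phi}-I_n=\om^{2,q}_\infty(\Phi)$, while~\eqref{eq: hupestim} applied at $t=0$ bounds $\|\na u_0'\|_{L^2}\le C\|\Phi\|_{L^2(\pa B_*)}$. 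Testing with $\hat\varphi=v_t$ and using coercivity plus the trace inequality, both right-hand terms are bounded by $\om^{2,q}(\Phi)\,\|\Phi\|_{L^2(\pa B_*)}\,\|v_t\|_{H^1}$, yielding $\|v_t\|_{H^1}\le\om^{2,q}(\Phi)\|\Phi\|_{L^2(\pa B_*)}$, which is~\eqref{eq: hupcont}.

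The only mild subtlety I anticipate is making sure the chain-rule identifications $\tphi_t\circ(\id+t\Phi)=\Phi$ and the change of variables formula are carried out correctly so that the right-hand side of \eqref{eq:pullback} really only involves $\Phi$ (and not $\tphi_t$), together with checking that the $\om$-type uniformity in $t\in[0,1]$ in \Cref{lem: geomestim} propagates to the final statement; everything else is a direct energy estimate.
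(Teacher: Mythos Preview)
Your proof is correct and follows essentially the same approach as the paper: pull back the weak formulation of $u_t'$ via $\id+t\Phi$ to obtain the elliptic equation \eqref{eq:pullback} for $\hup$, test with $\hup$ to get \eqref{eq: hupestim}, then subtract the equation for $u_0'$ (with the add-and-subtract of $A_{t\Phi}\nabla u_0'$) and test with $v_t=\hup-u_0'$ to get \eqref{eq: hupcont}. The paper presents the same manipulations, using a Young-inequality parameter~$\eta$ where you absorb directly; your anticipated subtleties ($\tphi_t\circ(\id+t\Phi)=\Phi$ and uniformity in $t$) are indeed the only points requiring care, and they go through.
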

\begin{proof}
    Let us recall that, by \Cref{prop: u'}, $u'_t$ solves the equation
    \[
          \int_{B_1} \nabla u'_{t}\cdot\nabla \varphi \, dx=\int_{\pa B_*^{t\Phi}} \varphi \,\tphi_t\cdot\nu_t\, d\Hn \qquad \qquad \forall\varphi\in H^1_0({B_1}).
    \]
    In particular, with the change of variables $x=(\id+t\Phi)(y)$, we get
    \begin{equation}
    \label{eq: hupweak}
      \int_{B_1} A_t\nabla \hup\cdot\nabla \vp \, dy=\int_{\pa B_*} J^\tau_t \varphi \,\hg_t\, d\Hn \qquad \qquad \forall\varphi\in H^1_0({B_1}), 
    \end{equation}
    where
    \[
        J_t^\tau:=\jac^{\pa B_*}(\id+t\Phi),\qquad \qquad A_t:= \det(I_n+tD\Phi) (I_n+t D\Phi)^{-1}(I_n+t D\Phi)^{-T}.
    \]
    Choosing $\vp=\hup$, and noticing that by \Cref{lem: geomestim} we have that $A_t$ is uniformly elliptic for small $\norm{\Phi}_{2,q}$, then there exists a positive constant $C$ such that
    \[
        \norm{\na \hup}_2^2\le C\int_{\pa B_*}\abs{\hg_t \hup}\,d\Hn.
    \]
    Using Poincaré inequality, Young inequality, and the embedding $W^{1,2}(B_1)\xhookrightarrow{}L^2(\pa B_*)$, we get for every $\eta>0$
    \[
        \norm{\hup}_{1,2}^2\le C\mleft(\eta\norm{\hg_t}_2^2 + \fr{1}{\eta}\norm{\hup}_{1,2}^2\mright).
    \]
    Formula \eqref{eq: hupestim} follows by choosing a suitable $\eta$, and by recalling $\abs{\hg_t}=\abs{\Phi\cdot \hnu_t}\le\abs{\Phi}$.

Analogously, subtracting the weak equations \eqref{eq: hupweak} solved by $\hup$ and $u'_0$, we have
\begin{equation}
\label{eq: weakhup-up0}
    \int_{B_1} (A_t\nabla \hup-\na u'_0)\cdot\nabla \vp \, dy=\int_{\pa B_*} (J^\tau_t \hg_t-g_0)\varphi \, d\Hn \qquad \qquad \forall\varphi\in H^1_0({B_1}). 
\end{equation}
In the rest of the proof, we evaluate \eqref{eq: weakhup-up0} with the test function $\vp=\hup-u'_0$. We estimate the left-hand side of \eqref{eq: weakhup-up0} by adding and subtracting $A_t\na u'_0\cdot\na \vp$, and using the uniform ellipticity of $A_t$ joint with \Cref{lem: geomestim}, so that for some constant $c>0$
\begin{equation*}
    \int_{B_1} (A_t\nabla \hup-\na u'_0)\cdot\nabla \vp \, dy\ge c\norm{\na(\hup-u'_0)}_2^2-\int_{B_1}\om_\infty^{2,q}(\Phi)\abs*{\na u'_0\cdot\na(\hup-u'_0)}\,dy.
\end{equation*}
Thanks to \eqref{eq: hupestim}, we get
\begin{equation}
    \label{eq: lhshup-up0}
    \int_{B_1} (A_t\nabla \hup-\na u'_0)\cdot\nabla \vp \, dy\ge c\norm{\na(\hup-u'_0)}_2^2-\om^{2,q}(\Phi)\norm{\Phi}_{2,\pa B_*}^{2}.
\end{equation}
On the other hand, we may estimate the right-hand side of \eqref{eq: weakhup-up0} as before and get
\begin{equation}
    \label{eq: rhshup-up0}
    \begin{split}
    \int_{\pa B_*} (J^\tau_t \hg_t-g_0)\varphi \, d\Hn &=\int_{\pa B_*} (1+\om_\infty^{2,q}(\Phi))\Phi\cdot\om_\infty^{2,q}(\Phi)(\hup-u'_0) \, d\Hn \\[7 pt]
    &\le \om^{2,q}(\Phi)\mleft(\eta \norm{\Phi}_{L^2(\pa B_*)}^2+\fr{1}{\eta}\norm{\hup-u'_0}_{1,2}^2\mright)
    \end{split}
\end{equation}
for every $\eta>0$. Joining \eqref{eq: weakhup-up0}, \eqref{eq: lhshup-up0}, \eqref{eq: rhshup-up0}, and the Poincaré inequality, with the right choice of $\eta$ we get
\[
\norm{\hup-u'_0}_{1,2}^2\le \om^{2,q}(\Phi)\norm{\Phi}_{L^2(\pa B_*)}^2.
\]
\end{proof}
\begin{lemma}
\label{lem: hwpestim}
    Let $m\in(0,\abs{B_1})$ and $j\in C^1(\R_+)\cap C^2((0,\infty))$ such that
    \begin{equation}
    \label{eq: lemhwpestimAssumptionj''}
    \red{\lim_{s\to0^+}s^\al j''(s)\in (0,+\infty)}
\end{equation}
\red{for some $\alpha\in(-\infty,1)$.} 
Let $\Phi\in W^{2,q}(B_1;\R^n)$ with $q>n$,  $w_t:=w_{B_*^{t\Phi}}$ the adjoint state defined in \Cref{defi: adjoint}, and  $\hwp=w'_t\circ(\id+t\Phi)$. Then there exist constants $C=C(j,m,n,q)$ and $\de=\de(j,m,n,q)$ such that if $\norm{\Phi}_{2,q}\le \de$ then
    \begin{equation}
    \label{eq: hwpestim}
        \norm{\hwp}_{1,2}\le C\norm{\Phi}_{L^2(\pa B_*)}.
    \end{equation}
    Moreover, we have
    \begin{equation}
    \label{eq: hwpcont}
        \hwp=w'_0+\om_{1,2}^{2,q}(\Phi)\norm{\Phi}_{L^2(\pa B_*)}.
    \end{equation}
\end{lemma}
\begin{proof}
    As in the proof of \Cref{lem: hupestim}, we have
    \begin{equation}
      \int_{B_1} A_t\nabla \hwp\cdot\nabla \vp \, dy=\int_{B_1} \det(I_n+tD\Phi)\,  \red{j''(\hu_t)}\hup \varphi \, dy \qquad \qquad \forall\varphi\in H^1_0({B_1}).
    \end{equation}
    By standard elliptic estimates joint with the continuity of $j'$, the equi-boundedness of $u_t$, and the geometric estimates \Cref{lem: geomestim}, we get
    \[
        \norm{\hwp}_{1,2}\le C\norm{\red{j''(\hu_t)}\hup}_{2}.
    \]
    \red{We claim that $j''(\hu_t)\le C(1-|x|)^{-1}$, so that Hardy's inequality (see ~\cite[Theorem 21.4]{Opic_Kufner_1990}) and } \eqref{eq: hupestim} in \Cref{lem: hupestim} ensure 
    \[
    \red{\norm{\red{j''(\hu_t)}\hup}_{2}\le C\norm{\hup}_{1,2}\le C \norm{\Phi}_{L^2(\pa B_*)}}
    \]
    and as a consequence we get \eqref{eq: hwpestim}. \red{Indeed, from the assumption~\eqref{eq: lemhwpestimAssumptionj''}, we get $j''(s)\le C s^{-1}$ while from the Hopf lemma we know that $\abs{\na u_0}>0$ on $\pa B_1$ and by $C^1$ convergence, also $\abs{\na \hu_t}\ge c>0$ uniformly near $\pa B_1$. In particular, $\hu_t\ge c(1-|x|)^{-1}$ for some possibly smaller $c$ and  $j''(\hu_t)\le C(1-|x|)^{-1}$ as claimed.}

    Analogously, we now estimate the norm of $\hwp-w'_0$ rewriting the equation as
    \begin{equation}
    \label{eq: weakhwp-wp0}
        \underbracket{\int_{B_1} (A_t\nabla \hwp-\na w'_0)\cdot\nabla \vp \, dy}_{I_1}=\underbracket{\int_{B_1} (J_t\, \red{j''}(\hu_t)\hup-\red{j''}(u_0)u'_0)\varphi \, dy}_{I_2} \qquad \qquad \forall\varphi\in H^1_0({B_1}). 
    \end{equation}
As done in the proof of \Cref{lem: hupestim}, we evaluate \eqref{eq: weakhwp-wp0} with the test function $\vp=\hwp-w'_0$ and get
\begin{equation}
    \label{eq: lhshwp-wp0}
   I_1\ge c\norm{\na(\hwp-w'_0)}_2^2-\om^{2,q}(\Phi)\norm{\Phi}_{L^2(\pa B_*)}^{2}.
\end{equation}
We also notice that by the shape differentiability of $\hu_t$ (see the proof of \Cref{prop: u'}) we have
\[
\hu_t=u_0+\om_\infty^{2,q}(\Phi).
\]
Also, \red{we claim that 
\[
\fr{j''(\hu_t)}{j''(u_0)}=1+\om_\infty^{2,q}(\Phi).
\]
Indeed, we start by noticing that the assumption~\eqref{eq: lemhwpestimAssumptionj''} ensures that $s^\alpha j''(s)$ is uniformly continuous on every compact $[0,M]$, with $s^\al j''(s)\ge c>0$, so that
\[
\frac{s^\al j''(s)}{\tau^\al j''(\tau)}=1+\om(\abs{s-\tau}),
\]
uniformly in $s$ and $\tau$. We then observe that 
\[
\fr{j''(\hu_t)}{j''(u_0)}=\ml(1+\om_\infty^{2,q}(\Phi)\mr)\fr{u_0^\al}{\hu_t^\al} 
\] 
because by elliptic estimates $\hu_t-u_0=\om_{\infty}^{2,q}(\Phi)$. Finally, we notice that since $\abs{\na u_0}\ge c>0$ on $\pa B_1$, we can find uniform constants $c,C>0$ such that $\hu_t\ge c(1-|x|)$, and 
\[
\abs{\hu_t-u_0}\le C\norm{\na\hu_t-\na u_0}_\infty (1-|x|).
\]
Therefore, 
\[
\fr{u_0}{\hu_t}=1+\om_\infty^{2,q}(\Phi),
\]
and joining the equalities we get the claim. Therefore, \red{similarly to the proof of} \Cref{lem: hupestim}, and using $j''(u_0)\le C(1-|x|)^{-1}$ with the Hardy inequality $\norm{\vp/(1-|x|)}_2\le C\norm{\vp}_{1,2}$ (see~\cite[Theorem 21.4]{Opic_Kufner_1990}), we get}
\begin{equation}
    \label{eq: rhshwp-wp0}
    \begin{split}
    I_2 &\le C\int_{B_1} \red{j''(u_0)}\abs*{\big(1+\om_\infty^{2,q}(\Phi)\big)\hup-u'_0)}\abs*{\hwp-w'_0}\, dy \\[7 pt]
    &\le C\int_{B_1} j''(u_0)\abs*{\hup-u'_0}\abs*{\hwp-w'_0}\, dy + \om^{2,q}(\Phi)\int_{B_1}\red{j''(u_0)}\abs*{\hup}\abs*{\hwp-w'_0}\,dy \\[7 pt]
    &\le C \om^{2,q}(\Phi)\mleft(\eta \norm{\Phi}_{2,\pa B_*}^2+\fr{1}{\eta}\norm{\hwp-w'_0}_{1,2}^2\mright)
    \end{split}
\end{equation}
for every $\eta>0$ and a suitable constant $C>0$. Joining \eqref{eq: weakhwp-wp0}, \eqref{eq: lhshwp-wp0}, \eqref{eq: rhshwp-wp0}, and the Poincaré inequality, with the right choice of $\eta$ we get
\[
\norm{\hwp-w'_0}_{1,2}^2\le \om^{2,q}(\Phi)\norm{\Phi}_{2,\pa B_*}^2,
\]
which implies \eqref{eq: hwpcont}.
\end{proof}

\begin{proof}[Proof of \Cref{prop: improvcontin}]

    \textbf{Step 1:} we first prove 
    \begin{equation}
    \label{eq: step1improvcont}
        b_t(\Phi,\Phi)=\om^{2,q}(\Phi)\norm{\Phi}_{L^2(\pa B_*)}^{2}.
    \end{equation}
    Since $\nu_0(x)=\abs{x}^{-1}x$, then we can rewrite \eqref{eq: bt} as
    \[
        b_t(\Phi,\Phi)=\int_{\pa B_*}J_t^\tau(\hw_t-w_0)(\hnu_t\cdot\nu_0)g^2\,\fr{n-1}{r_*+tg}\,d\Hn.
    \] 
    By \Cref{lem: geomestim} there exists a constant $C>0$ such that if $\norm{\Phi}_{2,q}$ is small enough, then 
    \[
    \abs{J_t^\tau}+\abs{\hnu_t\cdot\nu_0}+\fr{1}{r_*+tg}\leq C.
    \]
    Moreover, by shape differentiability of $\hw_t$ proved in \Cref{prop: firstshapeder}, we have
    \begin{equation}
    \label{eq: W1infhw}
        \hw_t=w_0+\om_{1,\infty}^{2,q}(\Phi).
    \end{equation}
    These estimates yield
    \begin{equation} 
    \label{eq: estimatebt}
        b_t(\Phi,\Phi)=\om^{2,q}(\Phi)\norm{\Phi}_{L^2(\pa B_*)}^2.
    \end{equation}
    \textbf{Step 2:} we now prove that
    \[
        L''(t)-b_t(\Phi,\Phi)=L''(0) + \om^{2,q}(\Phi)\norm{\Phi}_{L^2(\pa B_*)}^2.
    \]
    By \eqref{eq: L''Bt}
    \[
        L''(t)-b_t(\Phi,\Phi)=\int_{\pa B_*}J_t^\tau \ml(\widehat{w'_t} g+g^2\widehat{\na w_t}\cdot\nu_0\mr)(\hnu_t\cdot \nu_0)\,d\Hn
   =L''(0)+I_1+I_2,
    \]
    where
    \[
        I_1= \int_{\pa B_*}(J^\tau_t\widehat{w_t'}(\hnu_t\cdot \nu_0)- w'_0) g\, d\Hn, \] 
        \[
        I_2=\int_{\pa B_*}g^2\mleft(J_t^\tau(\widehat{\na w_t}\cdot\nu_0)(\hnu_t\cdot \nu_0)-(\na w_0\cdot\nu_0)\mright)\,d\Hn.
    \]
    To estimate $I_1$, we use \Cref{lem: geomestim}, and \Cref{lem: hwpestim} to get, dropping the dependence on $\Phi$ inside the infinitesimal notation (i.e. $\om_X^Y=\om_X^Y(\Phi)$),
    \[
    \begin{split} 
        I_1 &=\int_{\pa B_*} g\ml(\ml(1+\om_\infty^{2,q}\mr)\ml(w_0'+\om_{2,\pa B_*}^{2,q}\norm{\Phi}_{L^2(\pa B_*)}\mr)-w_0'\mr)\, d\Hn \\[7 pt] 
        &=\om^{2,q} \norm{\Phi}_{L^2(\pa B_*)}^2,
    \end{split}
    \]
    where we used H\"older inequality, $\norm{g}_{2}=\norm{\Phi}_{L^2(\pa B_*)}$, and \eqref{eq: hwpestim}.

    Finally, to estimate $I_2$ we use again \eqref{eq: W1infhw} and we notice that
    \begin{equation}
    \label{eq: hatnablaw}
        \widehat{\na w_t}=(I_n+tD\Phi)^{-T}\na \hw_t.
    \end{equation}
    Hence, using \eqref{eq: hatnablaw}, \eqref{eq: W1infhw}, and \Cref{lem: geomestim}, we get
    \[
        I_2=\int_{\pa B_*}g^2\Big((I_n+\om_\infty^{2.p})(\na w_0\cdot\nu_0+\om_\infty^{2,q})(1+\om_\infty^{2,q})-(\na w_0\cdot \nu_0)\Big)\, d\Hn=\om^{2,q}\norm{\Phi}_{L^2(\pa B_*)}^2.
    \]
\end{proof}

\subsection{Conclusion to the proof of \Cref{thm: fuglede}}
\label{sec:conclusion}
We are now in position to prove \Cref{thm: fuglede}: let $m,q,j$ as in the statement of the result. Let also $\Phi\in W^{2,q}(B_1;\R^n)$ orthogonal to $\pa B_*$ such that $|B_*^\Phi|=|B_*|$. We apply \Cref{cor: lagrangian} providing the Lagrangian $\Ll_\tau$ for some $\tau\in\R$ and 
    \[
        \forall t\in[0,1], \;L(t)=\Ll_\tau(B_*^{t\Phi}).
    \]
    By \Cref{prop: eigentostab} there exists $c_1>0$ such that
    \begin{equation}
    \label{eq: stabilityL''0}
        \Ll''_\tau(B_*)[\Phi,\Phi]=L''(0)\le -c_1\norm{\Phi\cdot\nu_0}_{L^2(\pa B_*)}^2=-c_1\norm{\Phi}_{L^2(\pa B_*)}^2.
    \end{equation}
    By \Cref{prop: improvcontin}, we get the existence of $\eta>0$ such that if $\|\Phi\|_{2,q}\leq \eta$,
    \begin{equation}
    \label{eq: stabilityL''t}
       \forall t\in[0,1],\qquad L''(t)\le L''(0)+ \fr{c_1}{2} \norm{\Phi}_{L^2(\pa B_*)}^2.
    \end{equation}
    Joining \eqref{eq: stabilityL''0}, \eqref{eq: stabilityL''t}, and the optimality condition $L'(0)=0$, we obtain for some $t_0\in(0,1)$
    \[
    \begin{split}
         \Ll_\tau(B_*)-\Ll_\tau(B_*^{\Phi})=L(0)-L(1) =-\frac{L''(t_0)}{2}\ge \fr{c_1}{4}\norm{\Phi}_{L^2(B_*)}^2.
    \end{split}
    \]
    The result now follows by noticing that, since $\Phi$ is orthogonal to $\pa B_*$, and $\norm{\Phi}_\infty$ is arbitrarily small, with $g=(\Phi\cdot\nu_0)_{|\pa B_*}$,
    \[
        \abs{B_*\De B_*^\Phi}= \fr{1}{n}\int_{\pa B_*}\abs*{(1+g)^n-1}\,d\Hn\le C(n) \norm{g}_{L^1(\pa B_*)}\le C(n,m)\norm{\Phi}_{L^2(\pa B_*)}.
    \]

\section{The case $\J(E)=\norm{u_E}_\infty$}
\label{sect:Linf}
\red{In this section, we deal with the case $p=\infty$ from \autoref{thm: maintheorem} that is a consequence of the following result:}
\begin{thm}
    \label{thm: maintheoremInfinity} 
    Let $n\ge 1$ and $m\in(0,\abs{B_1})$. Then there exists a positive constant $c=c(m,n)$ such that for every $V\in\M_m$ we have
    \[
        \norm{u_{B_*}}_\infty-\norm{u_V}_\infty\ge c\norm{V-\chi_{B_*}}_1^2.
    \]
    where $B_*$ is the centered ball of volume $m$.
\end{thm}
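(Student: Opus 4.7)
The strategy closely mirrors that of \Cref{thm: maintheorem}: a selection principle reduces global stability to a Fuglede-type estimate for smooth deformations of $B_*$. First, I would establish the analog of \Cref{prop: exist} for $\J(V) := \norm{u_V}_\infty$: this functional is convex on $\M_m$ (since $V\mapsto u_V$ is linear and $\norm{\cdot}_\infty$ convex), continuous for the weak-$*$ $L^\infty$ topology (by classical elliptic regularity), and the centered ball $B_*$ is the unique maximizer over $\M_m$. Uniqueness follows from the chain
\[
\norm{u_V}_\infty = u_V^\sh(0) \le u_{V^\sh}(0) = \int_{B_1} G(0,y)\,V^\sh(y)\,dy \le u_{B_*}(0),
\]
where $G$ is the Dirichlet Green function of $B_1$: the last step is strict unless $V^\sh = \chi_{B_*}$ by bathtub uniqueness for the radial decreasing kernel $G(0,\cdot)$, and equality in the first step (equality of suprema in Talenti's inequality) forces, via the isoperimetric ODE at the heart of Talenti's proof, the equality of rearrangements everywhere; then \Cref{thm: talentirigid} gives $V = \chi_{B_*}$. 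The global-to-local reduction of \Cref{prop: smalldelta} carries over verbatim, so it suffices to prove
\[
\liminf_{\delta\to 0}\,\inf_{V\in\M_m^\delta}\,\frac{\norm{u_{B_*}}_\infty - \norm{u_V}_\infty}{\delta^2} > 0.
\]

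For the selection step, let $E_\delta$ be a bang-bang maximizer of $\J$ in $\M_m^\delta$. Because $u_{B_*}$ has a strict non-degenerate maximum at the origin with $D^2 u_{B_*}(0) = -\tfrac{1}{n} I_n$, and $u_V \to u_{B_*}$ in $C^{1,\al}$ as $\norm{V - \chi_{B_*}}_1 \to 0$ by classical elliptic estimates, the Implicit Function Theorem applied to $\na u_V = 0$ provides, for $V$ close to $\chi_{B_*}$, a unique global maximum point $x_V$ of $u_V$ depending continuously on $V$, with $x_V \to 0$. I would then use $G(x_{E_\delta},\cdot)$ as the ``adjoint'' and set $\widetilde{E}_\delta := \{y \in B_1 : G(x_{E_\delta}, y) > t_\delta\}$ with $|\widetilde{E}_\delta| = m$. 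Applying the quantitative bathtub principle (\Cref{eq: quantbathtub}) to $G(x_{E_\delta},\cdot)$ yields
\[
\norm{u_{E_\delta}}_\infty = \int_{B_1} G(x_{E_\delta},y)\,\chi_{E_\delta}(y)\,dy \le \int_{B_1} G(x_{E_\delta},y)\,\chi_{\widetilde{E}_\delta}(y)\,dy - c\,|E_\delta \De \widetilde{E}_\delta|^2 \le \norm{u_{\widetilde{E}_\delta}}_\infty - c\,|E_\delta \De \widetilde{E}_\delta|^2.
\]
As $\delta \to 0$, $x_{E_\delta} \to 0$ and the appendix lemmas (\Cref{lem: constmeaslevel}, \Cref{lem: convdef}) show that $\widetilde{E}_\delta$ is a $W^{2,q}$-small normal deformation of $B_*$; combined with the Fuglede estimate $\norm{u_{B_*}}_\infty - \norm{u_{\widetilde{E}_\delta}}_\infty \ge c\,|\widetilde{E}_\delta \De B_*|^2$ (below) and the triangle inequality $\delta \le |E_\delta \De \widetilde{E}_\delta| + |\widetilde{E}_\delta \De B_*|$, this yields the desired lower bound.

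The core new step is the $L^\infty$ Fuglede estimate for a normal deformation $\Phi = g\,\nu_0$ of $B_*$ with $|B_*^\Phi| = m$ and $\norm{\Phi}_{W^{2,q}}$ small. Setting $J(t) := \norm{u_{B_*^{t\Phi}}}_\infty = u_t(x_t)$, a second-order Taylor expansion using $\na u_t(x_t) = 0$ (so $\dot x_0 = -(D^2 u_0(0))^{-1}\,\na u_0'(0) = n\,\na u_0'(0)$) and the Lagrangian mechanism of \Cref{cor: lagrangian} to enforce the volume constraint $\int_{\pa B_*} g\,d\Hn = 0$ at second order, gives
\[
J''(0) = \pa_r G(0, r_*)\,\norm{g}_{L^2(\pa B_*)}^2 + n\,|\na u_0'(0)|^2.
\]
The first shape derivative $u_0'$ solves $-\De u_0' = g\,\mrestr{d\Hn}{\pa B_*}$ by \Cref{prop: u'}, and the Green representation together with the radial symmetry $\na_x G(0,y) = \al(|y|)\,y$ with $\al(r) = (r^{-n}-1)/|\S^{n-1}|$ gives $\na u_0'(0) = \al(r_*)\int_{\pa B_*} y\,g\,d\Hn$. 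A direct Cauchy-Schwarz computation, saturated by the translational modes $g(y) = v\cdot y$ (which are zero-mean on $\pa B_*$ by symmetry), yields
\[
\sup_{\substack{g \in L^2(\pa B_*) \\ \int g = 0}}\,\frac{n\,|\na u_0'(0)|^2}{\norm{g}_{L^2(\pa B_*)}^2} = \al(r_*)^2\,r_*^2\,|\pa B_*| = (1 - r_*^n)^2\,|\pa_r G(0, r_*)|,
\]
which is strictly less than $|\pa_r G(0, r_*)|$ for $r_* \in (0,1)$, giving the coercivity $J''(0) \le -c\,\norm{g}_{L^2(\pa B_*)}^2$. The main technical hurdle will be establishing a continuity estimate $J''(t) = J''(0) + \om^{2,q}(\Phi)\,\norm{g}_{L^2(\pa B_*)}^2$ analogous to \Cref{prop: improvcontin}: the shape-derivative formalism must now track the moving maximum point $x_t$ via the IFT (instead of the smooth auxiliary state $w_E$ used for $p \in (1,\infty)$), which amounts to extending the $W^{1,2}$-type estimates of \Cref{sec: imprcont} to the residual terms involving $x_t$, using the $C^{1,\al}$-convergence of $u_V$ as $V$ varies. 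Once in place, the standard Taylor argument of \Cref{sec:conclusion} concludes.
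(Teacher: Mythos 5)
Your proposal mirrors the paper's proof of \Cref{thm: maintheoremInfinity} step for step: convexity and weak-$*$ continuity of $\J$, uniqueness of $B_*$ via the bathtub rigidity and Talenti rigidity, the reduction to the small-$\delta$ regime via \Cref{prop: smalldelta}, the use of $G(x_{E_\delta},\cdot)$ as the ``adjoint'' together with the quantitative bathtub principle, the appendix lemmas to realize $\widetilde E_\delta$ as a $W^{2,q}$-small normal deformation of $B_*$, and a Fuglede-type estimate combining spectral coercivity of $L''(0)$ with an improved continuity bound handling the moving maximum point $x_t$. One remark worth flagging: your formula $x_0' = -(D^2 u_0(0))^{-1}\nabla u_0'(0) = n\,\nabla u_0'(0)$ is the correct implicit-function-theorem derivative, whereas the paper's \Cref{prop: x'infinity} as written omits the inverse (propagating a factor $1/n$ in place of $n$ into the coercivity computation of \Cref{prop: eigentostabInfinity}); the paper's final bound $\frac{(1-r_*^n)^2}{n^2}-1<0$ is nonetheless still negative, so the conclusion holds, but your version $(1-r_*^n)^2-1<0$ gives the constant that the correct IFT formula actually produces.
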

\red{The strategy is very similar to the proof of \autoref{thm: pnorm}, so we advise the reader to be familiar with the strategy described in Sections \ref{fugledeImpliesStability} and \ref{sect:fuglede}. We will insist on the most significant modifications to the proofs.}
In this section we denote $\J(V)=\norm{u_V}_\infty$. 

\subsection{Proof of \autoref{thm: maintheoremInfinity} from a Fuglede-type result}\label{ssect: fugledeinfinity}
As done for the case $1<p<+\infty$, we start by showing that \autoref{thm: maintheoremInfinity} will follow from the following Fuglede-type result:.
\begin{thm}
    \label{thm: fugledeInfinity}
       Let $m\in(0,\abs{B_1})$, $q>n$. Then there exist positive constants $c=c(m,n)$, $\eta=\eta(m,n)$ such that for every $\Phi\in W^{2,q}(B_1,\R^n)$ orthogonal to $\pa B_*$ with
        \[
            \norm{\Phi}_{2,q}\le \eta, \qquad \qquad \abs{B_*^\Phi}=\abs{B_*}=m,,
        \]
        we have
        \[
             \J(B_*)-\J(B_*^{\Phi})\ge c \abs{B_*\De B_*^{\Phi}}^2.
        \]
\end{thm}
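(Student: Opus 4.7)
The plan is to adapt the shape-derivative/selection-principle strategy of \Cref{fugledeImpliesStability}--\Cref{sect:fuglede} to the $L^\infty$ norm, using the Green's function $G$ of $B_1$ to write $u_E(x) = \int_E G(x,y)\,dy$.

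The first step reduces $\|u_{B_*^\Phi}\|_\infty$ to a smooth point-evaluation functional. Since $u_{B_*}$ is radial and $-\Delta u_{B_*}=1$ in $B_*$, one has $D^2 u_{B_*}(0) = -\tfrac{1}{n}I_n$, so $0$ is a non-degenerate interior maximum. For $\|\Phi\|_{2,q}$ small, the $C^1$-dependence of $u_{B_*^\Phi}$ on $\Phi$ (as in \Cref{prop: u'} combined with Sobolev embeddings) keeps $D^2 u_{B_*^\Phi}$ negative definite near $0$, so the implicit function theorem applied to $\nabla u_{B_*^\Phi}=0$ yields a unique interior maximizer $x_\Phi$, depending $C^1$-smoothly on $\Phi$, with $x_0=0$. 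Setting $\widetilde\J(\Phi) := u_{B_*^\Phi}(x_\Phi) = \|u_{B_*^\Phi}\|_\infty$, I reduce the analysis to this scalar functional. By the envelope identity $\nabla u_{B_*^\Phi}(x_\Phi)=0$ one has $\widetilde\J'(0)[\Psi] = u'_0[\Psi](0)$; differentiating $\nabla u_{B_*^\Phi}(x_\Phi)=0$ in $\Phi$ gives $\pa_\Phi x_\Phi|_0[\Psi] = n\,\na u'_0[\Psi](0)$, hence
\[
    \widetilde\J''(0)[\Psi,\Psi] = u''_0[\Psi,\Psi](0) + n\,|\na u'_0[\Psi](0)|^2.
\]

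The second step computes these derivatives explicitly via $G$. Using $u_{B_*^t}(x) = \int_{B_*^{t\Phi}} G(x,y)\,dy$ and two successive applications of \Cref{lem: distrderchar} (with $\tphi_t \equiv \Phi$ and $\Phi$ normal to $\pa B_*$ of trace $g = \Phi\cdot\nu_0$), I obtain
\[
    u'_0[\Phi](x) = \int_{\pa B_*} G(x,y)\,g\,d\Hn, \qquad u''_0[\Phi,\Phi](0) = \int_{\pa B_*} g^2\bigl(\pa_\nu G(0,\cdot) + H_{\pa B_*}\,G(0,\cdot)\bigr)\,d\Hn.
\]
Rotational symmetry forces $G(0,\cdot)$ to be constant on $\pa B_*$ and $\na_x G(0,y) = h(|y|)\,\nu_0(y)$ for a scalar $h$, with $h(r_*)>0$ and $\pa_r G(0,r_*)<0$ (by Hopf). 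Choosing $\tau = -G(0,r_*)$ in $\Ll_\tau := \widetilde\J + \tau\,|\cdot|$ both annihilates $\Ll_\tau'(B_*)$ and cancels the mean-curvature terms, leaving
\[
    \Ll_\tau''(B_*)[\Phi,\Phi] = \pa_r G(0,r_*)\,\|g\|_{L^2(\pa B_*)}^2 + n\,h(r_*)^2\,\Bigl|\int_{\pa B_*}\nu_0\,g\,d\Hn\Bigr|^2.
\]

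The third step -- coercivity -- is the main obstacle. Since $\nu_0$ restricts to a combination of degree-$1$ spherical harmonics on $\pa B_*$, the second term picks out exactly the degree-$1$ modes of $g$; expanding $g = \sum_{k\geq 1,\,m}\al_{k,m} Y_{k,m}$ diagonalizes the quadratic form. For $k\geq 2$ coercivity is immediate from $\pa_r G(0,r_*)<0$. The case $k=1$ is delicate: normal deformations with degree-$1$ trace agree at first order with rigid translations $T_v B_*$, which are near-symmetries that the boundary of $B_1$ breaks only subtly. Talenti's strict rigidity (\Cref{thm: talentirigid}) ensures that $v\mapsto \widetilde\J(T_v B_*)$ has a strict max at $v=0$, but turning this into a \emph{quadratic} upper bound requires verifying the strict inequality $\pa_r G(0,r_*) + n\,h(r_*)^2\,c_n\,r_*^{n-1} < 0$ directly from the explicit expression of $G$ on the unit ball; alternatively, the sharp quadratic deficit for annuli in \Cref{prop: p=infty} can be transferred to this mode by a comparison argument.

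Once $\Ll_\tau''(B_*)[\Phi,\Phi]\leq -c\,\|g\|_{L^2(\pa B_*)}^2$ is established (handling the small first-order volume defect $|\int g| \lesssim \eta\,\|g\|_2$ coming from the exact constraint $|B_*^\Phi|=m$ as in the proof of \Cref{prop: eigentostab}), the endgame mirrors \Cref{ssect:continuity}--\Cref{sec:conclusion}: an improved-continuity estimate $L''(t) = L''(0) + \om^{2,q}(\Phi)\,\|g\|_{L^2(\pa B_*)}^2$ follows from the $W^{2,q}$-continuity of $u_{B_*^\Phi}$, $x_\Phi$ and the boundary integrals involving $G$, and a second-order Taylor expansion on $[0,1]$ with $L'(0)=0$ yields $\widetilde\J(B_*)-\widetilde\J(B_*^\Phi)\geq \tfrac{c}{4}\,\|g\|_{L^2(\pa B_*)}^2 \gtrsim |B_*\De B_*^\Phi|^2$, completing the proof.
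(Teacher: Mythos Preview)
Your overall architecture---reduce $\|u_{B_*^\Phi}\|_\infty$ to a point evaluation via the implicit function theorem for the maximizer $x_\Phi$, compute first and second shape derivatives through the Green's function, build the Lagrangian with $\tau=-G(0,r_*)$, establish coercivity of $\Ll_\tau''(B_*)$ on $L^2(\pa B_*)$, prove an improved-continuity estimate for $L''(t)$, and finish by Taylor expansion---is exactly the route taken in the paper (\S5.2--5.4). Your formula $\Ll_\tau''(B_*)[\Phi,\Phi]=\pa_r G(0,r_*)\,\|g\|_{L^2(\pa B_*)}^2+n\,h(r_*)^2\,\bigl|\int_{\pa B_*}\nu_0\,g\,d\Hn\bigr|^2$ with $h(r_*)=-(\zeta'(r_*)-r_*\zeta'(1))$ agrees with the paper's computation.

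One correction: the degree-$1$ coercivity is not the ``main obstacle'' and does not require any indirect argument. Plugging in $\zeta'(r_*)=-1/P(B_*)$ and $h(r_*)=(1-r_*^n)/P(B_*)$ (which follows from $P(B_*)=r_*^{n-1}P(B_1)$), the coefficient of the degree-$1$ block of $g$ is
\[
\zeta'(r_*)+h(r_*)^2\,P(B_*)\;=\;\frac{(1-r_*^n)^2-1}{P(B_*)}\;<\;0,
\]
which is an elementary algebraic check---this is precisely what the paper does in \Cref{prop: eigentostabInfinity}. Your proposed alternative via \Cref{prop: p=infty} would not work here: the annuli $A_\de$ are radially symmetric perturbations and carry no degree-$1$ content, so they give no information about the translation modes, which are exactly the ones in question. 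Likewise, Talenti's rigidity alone (as you note) only gives a strict maximum, not a quadratic one, so it cannot substitute for the explicit computation.
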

We postpone the proof of \Cref{thm: fugledeInfinity} to the next sections, and we show that \Cref{thm: maintheoremInfinity} holds. For $r\in(0,1)$, let
\[
    \ze(r) = \begin{dcases}
        -\frac{1}{2\pi}\ln(r) &n=2, \\ 
        \frac{1}{n(n-2)\om_n} r^{2-n} &n\ne 2,
    \end{dcases}
\]
be the fundamental solution in $\R^n$ to the Laplace equation. We identify $\ze(x)$ with $\ze(\abs x)$ and we recall the Green's function for the ball $B_1$ given by
\[
 G(x,y) = \ze(y-x) - \ze (\abs{x}(y-\tx)), \qquad \qquad \tx =\frac{x}{\abs{x}^2},   
\]
and
\[
    G(0,y) = \ze(y)-\ze(1).
\]
Often we will use the notation $G_x(y)=G(x,y)$. Let $E\sbs B_1$ and let $x_E$ be a maximum point of $u_E$. Then\
\[
    \J(E)=\norm{u_E}_\infty  = u_E(x_E) = \int_{E} G(x_E,y)\,dy.
\]
The function $G(x_E,\cdot)$ will play the role of the adjoint state $w_E$ in the case $1<p<+\infty$, as we will see later. 
\begin{proof}[Proof of \Cref{thm: maintheoremInfinity}]
   \begin{proofItemize}
	 \item Similarly to \Cref{prop: exist}, we can prove that $V\in L^\infty(B_1)\mapsto \J(V)$ is convex (not strictly) and weakly-$*$ continuous in $L^\infty(B_1)$. Therefore, there exists a bang-bang maximizer of $\J$ both in $\M_m$ and in $\M_m^\de$ for every $\de> 0$. Moreover, the maximizer in $\M_m$ is the ball and it is unique. Indeed, $B_*$ is a maximizer by Talenti's inequality and the fact that there exists a bang-bang maximizer. About uniqueness, let $f\in \M_m$ be a maximizer for $\J$ and notice that, using the Talenti's inequality
     \[
        \int_{B_1}G(0,y)f^\sh(y)\,dy  = \J(f^\sh) \ge \J(f) = \J(B_*) =  \int_{B_1}G(0,y)\chi_{B_*}(y)\,dy;
     \]
     by the rigidity of the bathtub principle, since $\J(B_*)\ge\J(f^\sh)$, the equality implies that $f^\sh = \chi_{B_*}$ and then $f=\chi_E$ for some measurable set $E$.
    Therefore, $E=B_*$ because $B_*$ is the unique maximizer of $\J$ among characteristic functions because of the rigidity of the Talenti's inequality for the $L^\infty$ norm (see \cite[Corollary 1]{ALT86}).     
     Therefore, \Cref{prop: smalldelta} also holds for $\J(V)=\norm{u_V}_\infty$, and proving the theorem is equivalent to prove that 
    \[
    \liminf_{\delta\to 0}\inf_{V\in\M_m^\delta}\frac{\J(B_*)-\J(V)}{\delta^2}>0.
    \]

    \item Let us assume by contradiction that 
    \[
        \lim_{\de\to 0}\fr{\J(B_*)-\J(E_\de)}{\de^2}=0,
    \]
    where $E_\de$ is the maximizer of $\J$ in $\M_m^\de$.
    Since for every measurable set $E\sbse B_1$,
    \[
        \J(E)=\int_{E}G(x_E,y)\,dx, \qquad \qquad x_E\in \argmax u_E,
    \]
    we can reproduce the proof of \Cref{thm: maintheorem} from \autoref{ssect: mainproof}, provided that $x_\de=x_{E_\de}$ is uniquely determined and  $x_\de$ converges to $0=x_{B_*}$. Indeed, in that case, since $G\in C^\infty(B_1\times B_1\sm \diag(B_1\times B_1))$, we have that for every fixed small $\eps>0$, the functions $G_{\de}$ converge to $G_0$ in $C^2(B_1\sm B_\eps)$. Therefore, we have non-degeneracy of $\na G_\de$ and the result will follow by retracing the proof of \Cref{thm: maintheorem}: with similar computations as in \autoref{ssect: mainproof}, we obtain
    \begin{itemize}
        \item for every $\de$ small there exists a level set $\tEde$ of $G_\de$ of measure $m$;
        \item by the quantitative bathtub principle applied to $G_\de$,
        \[\J(\tEde)-\J(E_\de)\geq u_{\widetilde{E}_\de}(x_\de)-u_{E_\de}(x_\de)=\int_{B_1}  G(x_\de,\cdot)(\chi_{\widetilde{E}_\de}-\chi_{E_\de})\geq c|E_\de\Delta \tEde|^2\]    
        \item by \autoref{lem: convdef} $\tEde$ are smooth deformations of $B_*$, leading to a contradiction with \autoref{thm: fugledeInfinity}.
    \end{itemize}

   \item We let $u_\de=u_{E_\de}$. We now show that $x_\de$ is uniquely defined and it converges to $x_{B_*}=0$. Since $\abs{\na u_0}=0$ only in the origin, we have that any critical point of \(u_\de \) has to be uniformly close to the origin. Thanks to the Schauder's estimates  we have that for every $K\ssubset B_1\sm \pa B_*$ 
    \[
        u_\de \xrightarrow{C^2(K)} u_0,
    \]
    and since $u_0$ has a unique maximum, and by explicit computations $-D^2 u_0(0)$ is positively definite, we apply the implicit function theorem to the equation $\na u_\de(x_\de)=0$, so that for small $\de$, the functions $u_\de$ also have a unique maximum $x_\de$ converging to 0.
\end{proofItemize}
\end{proof}
\subsection{Computation of shape derivative}
We will now study optimality conditions when the objective functional is $E\mapsto\norm{u_E}_\infty$. A similar study was also performed in the paper \cite{HenLucPhil2018}, where the authors deal with the shape derivative of the $L^\infty$ norm of the torsion function of a set $\Om$; our analysis goes further as we compute second order derivatives (which requires a computation of the derivative of the maximum point).

We fix $q>n$. As we saw in the proof of \Cref{thm: maintheoremInfinity}, for $\Phi\in W^{2,q}(B_1;\R^n)$ small, $u_\Phi$ has a unique maximum point $x_\Phi$, and we now show that we can differentiate $x_\Phi$ with respect to $\Phi$. 
\begin{prop}[Shape differentiability of $x_\Phi$]
    \label{prop: x'infinity}
    The application
    \[
        \Phi\in W^{2,q}(B_1;\R^n)\longmapsto x_\Phi=\argmax u_{B_*^\Phi}\in \R^n
    \]
    is of class $C^1$ in a neighborhood of 0.
    In particular, if $\Phi\in W^{2,q}(B_1;\R^n)$ is small enough and we denote by $x_t:=x_{t\Phi}$, then we have that for every $t\in[0,1]$, 
    \[
    x'_t = -\red{\ml(D^2 u_t(x_t)\mr)^{-1}} \na u'_t(x_t).
    \]
\end{prop}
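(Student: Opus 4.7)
The plan is to apply the implicit function theorem to the equation $\na u_\Phi(x) = 0$ characterizing the maximum of $u_\Phi$ near the origin. The argument in the proof of \Cref{thm: maintheoremInfinity} has already established that for $\norm{\Phi}_{2,q}$ small enough, $u_\Phi$ admits a unique critical point $x_\Phi$ close to $0$, which is a non-degenerate maximum. What remains is to upgrade this to $C^1$ dependence of $x_\Phi$ on $\Phi$ and to identify the derivative.

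The main technical point, and the main obstacle, is to verify that the map $F(\Phi, x) := \na u_\Phi(x)$ is $C^1$ on a neighborhood of $(0, 0) \in W^{2,q}(B_1; \R^n) \times \R^n$. By itself, \Cref{prop: u'} only gives that $\Phi \mapsto u_\Phi \in W^{1,q}_0(B_1)$ is of class $C^1$, which is not enough to differentiate $\na u_\Phi(x)$ pointwise in $x$. The key observation is that $-\De u_\Phi = \chi_{B_*^\Phi}$, and for $\norm{\Phi}_{2,q}$ small there is a fixed $r > 0$ such that $B_r(0)$ is contained in the interior of $B_*^\Phi$. On $B_r(0)$ we may then write $u_\Phi(x) = -\abs{x}^2/(2n) + h_\Phi(x)$ with $h_\Phi$ harmonic. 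The Poisson representation on $B_r(0)$, combined with the continuous inclusion $W^{1,q}_0(B_1) \hookrightarrow C^0(\overline{B_1})$ (valid since $q > n$), then yields that $\Phi \mapsto h_\Phi$ is $C^1$ into $C^\infty(\overline{B_{r/2}(0)})$, so that $F$ is $C^1$ near $(0, 0)$ with values in $C^1(\overline{B_{r/2}(0)}; \R^n)$.

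A direct radial computation gives $u_0(x) = (r_*^2 - \abs{x}^2)/(2n)$ on $B_*$, hence $D_x F(0, 0) = D^2 u_0(0) = -n^{-1} I_n$, which is invertible. The implicit function theorem then provides a $C^1$ map $\Phi \mapsto x_\Phi$ defined near $0$ satisfying $\na u_\Phi(x_\Phi) = 0$; by the uniqueness of the critical point already established in the proof of \Cref{thm: maintheoremInfinity}, this $x_\Phi$ coincides with the argmax of $u_\Phi$. Finally, differentiating the identity $\na u_{t\Phi}(x_t) = 0$ in $t$ and invoking \Cref{prop: u'} yields $D^2 u_t(x_t)\, x'_t + \na u'_t(x_t) = 0$, from which the stated formula for $x'_t$ follows after inverting the (negative definite, hence invertible) Hessian $D^2 u_t(x_t)$.
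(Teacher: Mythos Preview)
Your proposal is correct and follows essentially the same route as the paper: both argue that, since $\chi_{B_*^\Phi}\equiv 1$ on a fixed small ball around the origin, the map $\Phi\mapsto u_\Phi$ is $C^1$ into a space carrying $C^2$ regularity there, and then apply the implicit function theorem to $\nabla u_\Phi(x)=0$, using $D^2u_0(0)=-n^{-1}I_n$. The only difference is in how this interior $C^2$ regularity is obtained: the paper restricts (without loss of generality) to deformations supported away from $B_{r_*/2}$ and invokes an interior Schauder estimate to upgrade the $C^1$ dependence $\Phi\mapsto u_\Phi\in W^{1,q}_0(B_1)$ to $C^1$ dependence into $C^2(B_{r_*/2})$, whereas you write $u_\Phi=-\abs{x}^2/(2n)+h_\Phi$ with $h_\Phi$ harmonic and use the Poisson representation. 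Your variant is a bit more self-contained and avoids the support reduction; the paper's is slightly shorter once one is willing to quote Schauder.

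One remark: your final step correctly yields $x'_t=-(D^2u_t(x_t))^{-1}\nabla u'_t(x_t)$, which is what the implicit function theorem gives and what the paper actually uses downstream (e.g.\ in the proof of \Cref{prop: eigentostabInfinity}, where $-(D^2u_0(0))^{-1}=nI_n$ is applied). The displayed formula in the statement of the proposition is missing the inverse on the Hessian; your derivation is the right one.
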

\begin{proof}
    Since $u_\Phi$ depends only on the values of $\Phi$ on $\pa B_*$, without loss of generality we may assume that $\supp(\Phi)\sbs B_1\sm B_{r_*/2}$. Let us recall that $x_\Phi$ is uniquely defined as the solution to the equation
    \begin{equation}
        \label{eq: implicitxphi}
        \na u_\Phi(x_\Phi) = 0.  
    \end{equation}
    As shown in the proof of \Cref{thm: maintheoremInfinity}, we know that $x_\Phi$ is converging to 0 as $\Phi$ goes to 0, and we will assume to have $x_\Phi\in B_{r_*/2}$. 
    Since for small $\Phi$ we have $\chi_{B_*^\Phi}=1$ in $B_{r_*/2}$, then using the Schauder's estimate $\norm{u_\Phi-u_0}_{C^2(B_{r_*/2})}\le C_\eps\norm{u_\Phi-u_0}_\infty$ we can easily adapt the proof of \Cref{prop: u'} to show that 
    \[
        \Phi\in W^{2,q}(B_1;\R^n)\cap H^1_0(B_1\sm B_{r_*/2})\longmapsto u_\Phi\in W^{1,q}_0(B_1) \cap C^2(B_{r_*/2})
    \]
    is of class $C^1$. The conclusion follows by the implicit function theorem applied to equation~\eqref{eq: implicitxphi}.
\end{proof}

\begin{prop}[Representation formula for $u'_t$]\label{prop: u'infinity}
Let $\Phi\in W^{2,q}(B_1;\R^n)$, let $u_t:=u_{t\Phi}$. Then for every $x\in B_1\sm \pa E_t$ we have 
    \begin{equation}
    \label{eq: u'infty}
        u'_t(x)=\int_{\pa E_t} G_{x} \,(\tphi_t\cdot\nu_t)\,d\Hn.
    \end{equation}
    where $\widetilde{\Phi}_t=\Phi\circ(\id+t\Phi)^{-1}$.
\end{prop}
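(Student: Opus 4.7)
The plan is to combine the PDE characterization of $u'_t$ given by \autoref{prop: u'} with the Green's function representation of the Dirichlet Laplacian on $B_1$. By \autoref{prop: u'}, $u'_t\in W^{1,q}_0(B_1)$ solves in the distributional sense
\[
-\Delta u'_t=\mu,\qquad\text{where }\mu:=(\tphi_t\cdot\nu_t)\,\mrestr{d\Hn}{\pa E_t}.
\]
For a smooth source $f\in C^\infty_c(B_1)$, the classical Green's representation gives that the unique $H^1_0$ solution $v$ of $-\Delta v=f$ satisfies $v(x)=\int_{B_1}G(x,y)f(y)\,dy$. The aim is to extend this identity to the singular source $\mu$, which is a surface measure on $\pa E_t$. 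Note that the right-hand side of~\eqref{eq: u'infty} is well-defined for $x\in B_1\sm \pa E_t$ since $G_x$ is then bounded on a neighborhood of the compact set $\pa E_t$ and $(\tphi_t\cdot\nu_t)\in L^\infty(\pa E_t)$.

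To justify the extension, I would proceed by approximation. Let $(\rho_k)$ be a sequence of standard mollifiers and set $\mu_k:=\mu\ast\rho_k\in C^\infty_c(B_1)$ for $k$ large, supported in the shrinking tubular neighborhood $T_k:=\{y\in B_1:\dist(y,\pa E_t)<1/k\}$. Let $v_k\in H^1_0(B_1)$ denote the unique solution of $-\Delta v_k=\mu_k$; the classical representation gives $v_k(x)=\int_{B_1}G(x,y)\mu_k(y)\,dy$ for every $x\in B_1$. Since $\mu_k\rightharpoonup\mu$ weakly as measures with uniformly bounded total variation, and $G_x$ is continuous on $\overline{T_k}$ for $k$ large (as $\dist(x,\pa E_t)>0$), the right-hand side converges to $\int_{\pa E_t}G_x(\tphi_t\cdot\nu_t)\,d\Hn$ as $k\to\infty$.

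It remains to show the pointwise convergence $v_k(x)\to u'_t(x)$. On the ball $U:=B_{d/2}(x)$ with $d:=\dist(x,\pa E_t)$, the function $v_k-u'_t$ becomes harmonic once $T_k\cap U=\emptyset$, so by interior elliptic regularity
\[
\abs{v_k(x)-u'_t(x)}\le C\,\norm{v_k-u'_t}_{L^2(U)},
\]
with $C$ depending only on $d$. The $H^1_0$-convergence of $v_k$ to $u'_t$ can then be obtained by an energy estimate: $\mu$ acts continuously on $H^1_0(B_1)$ through the trace embedding on the compact hypersurface $\pa E_t$, so that $\mu_k\to\mu$ in $H^{-1}(B_1)$, and the solution operator $(-\Delta)^{-1}:H^{-1}\to H^1_0$ is continuous. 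The main obstacle is this coordination between the weak convergence of the regularizations $\mu_k$ of the surface measure and pointwise convergence at a point away from $\pa E_t$; once $x$ is separated from the support of $\mu$, the singularities of $G_x$ and of $\mu$ are disjoint, and combining the two limits in the equality $v_k(x)=\int_{B_1}G(x,y)\mu_k(y)\,dy$ yields the desired formula~\eqref{eq: u'infty}.
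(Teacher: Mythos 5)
Your approach coincides with the paper's: cite the distributional equation for $u'_t$ from \Cref{prop: u'} and invoke the Green's representation on the ball, the paper doing this in a single line while you supply the mollification details. The filling-in is essentially correct; the only point that deserves a word is the assertion that $\mu_k\to\mu$ in $H^{-1}(B_1)$, which does not follow directly from ``$\mu$ acts continuously on $H^1_0$'' (one needs, e.g., a compactness argument for $\mathrm{tr}\circ M_k\to\mathrm{tr}$ in operator norm) — though more simply, uniform $H^1_0$ bounds on $v_k$ together with the weak-$*$ convergence $\mu_k\rightharpoonup\mu$ already give $v_k\rightharpoonup u'_t$ weakly in $H^1_0(B_1)$, hence strongly in $L^2(U)$, which is all the interior estimate requires.
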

\begin{proof}
    We recall that by \Cref{prop: u'}, 
    \[
        -\De u'_t = (\tphi_t\cdot\nu)\,\mrestr{d\Hn}{\pa B_*^{t\Phi}}.
    \]
    The result then follows by writing $u'_t$ with the Green's representation formula.
\end{proof}
In the following we denote by $\na_x G_{x_t}(y) = \na_x G(x_t,y)$ and  $\na_y G_{x_t}(y) = \na_y G(x_t,y)$. Moreover, we let $G'_t(y)=\na_x G_{x_t}(y)\cdot x'_t$.
\begin{prop}
    The application
    \[
        \Phi\in W^{2,q}(B_1;{\R^n})\longmapsto \J(B_*^\Phi)\in \R
    \]
    is of class $C^2$ in a neighborhood of 0. If $\Phi\in W^{2,q}(B_1;\R^n)$ is small enough and $J(t)=\J(B_*^{t\Phi})$, then for $t\in[0,1]$,
\begin{equation} 
\label{eq: J'Infinity}
\begin{split}
    J'(t)=\int_{\partial B_*^{t\Phi}} G_{x_t}\, (\tphi_t\cdot\nu_t)\,d\Hn,
\end{split}
\end{equation}
where $\nu_t=\nu_{E^{t\Phi}}$ is the outer unit normal to $B_*^{t\Phi}$, and $\tphi_t=\Phi\circ(\id+t\Phi)^{-1}$. Moreover, letting $g_t=\tphi_t\cdot\nu_t$,
\begin{equation} 
\label{eq: J''Infinity}
\begin{split}
    J''(t)=\int_{\pa B_*^{t\Phi}}\ml(g_t G'_t+ g_t\na_y G_{x_t}\cdot\tphi_t\mr)\,d\Hn(y)+a_t(\Phi,\Phi),
\end{split}
\end{equation}
where
\[
a_t(\Phi,\Phi)=\int_{\pa B_*^{t\Phi}}G_{x_t}\ml(g_t \divv(\tphi_t)-{((D\tphi_t)\tphi_t)}\cdot\nu_t\mr)d\Hn.
\]
\end{prop}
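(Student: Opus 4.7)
The plan is to parallel the proof of \Cref{prop: firstshapeder}, with the ``adjoint'' role now played by the Green's function $y\mapsto G(x_t,y)$: where the finite-$p$ proof used $-\De w_t = j'(u_t)$ to integrate by parts, here I will use the reproducing identity $u_t(x_t)=\int_{B_*^{t\Phi}} G(x_t,y)\,dy$ together with the optimality condition $\na u_t(x_t)=0$. By \Cref{prop: x'infinity}, the map $t\mapsto x_t$ is $C^1$ and $x_t$ lies in $B_{r_*/2}$ uniformly in $t\in[0,1]$ for $\|\Phi\|_{2,q}$ small, so $y\mapsto G(x_t,y)$ is smooth in a neighborhood of $\pa B_*^{t\Phi}$ and all boundary integrals below are well-defined.

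For the first derivative, I differentiate $J(t)=u_t(x_t)$ via the chain rule:
\[
J'(t)=u'_t(x_t)+\na u_t(x_t)\cdot x'_t = u'_t(x_t),
\]
the last equality being the optimality at the maximum. Plugging in the Green's representation of $u'_t$ from \Cref{prop: u'infinity} then yields~\eqref{eq: J'Infinity}.

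For the second derivative, I rewrite $J'(t)$ by pulling back to $\pa B_*$ through $y=(\id+t\Phi)(z)$ so that the domain becomes $t$-independent, and then differentiate under the integral. The $t$-derivative of $G(x_t,(\id+t\Phi)(z))$ splits into the dependence on $x_t$, producing $G'_t(y)$, and the dependence on $y$, producing $\na_y G_{x_t}(y)\cdot\Phi(z)$; the $t$-derivatives of the pulled-back unit normal $\hnu_t$ and Jacobian $J_t^\tau$ generate the boundary term $a_t(\Phi,\Phi)$, after invoking the identity $\pa_t\tphi_t=-(D\tphi_t)\tphi_t$ obtained by differentiating $\tphi_t\circ(\id+t\Phi)=\Phi$. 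Pushing back to $\pa B_*^{t\Phi}$ and using $g_t\circ(\id+t\Phi)=\Phi\cdot\hnu_t$ reorganises the contributions into~\eqref{eq: J''Infinity}. The $C^2$ Fr\'echet regularity of $\Phi\mapsto\J(B_*^\Phi)$ near $0$ then reads off this pulled-back expression: it depends on $\Phi$ only through $x_\Phi$ (which is $C^1$ by \Cref{prop: x'infinity}), the smooth composition $G(x_\Phi,(\id+\Phi)(\cdot))$, and the geometric quantities controlled by \Cref{lem: geomestim}, hence $\Phi\mapsto J'(\Phi)$ is itself $C^1$.

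The main obstacle is the careful bookkeeping of the three distinct $t$-dependencies contributing to $J''$ --- those of $x_t$, of $\tphi_t$, and of the moving surface $\pa B_*^{t\Phi}$ --- and in particular verifying that the asymmetric cross-term $((D\tphi_t)\tphi_t)\cdot\nu_t$ appears with the correct sign from $\pa_t\tphi_t$, a point analogous to the discrepancy already noted in the remark following \Cref{cor: lagrangian}.
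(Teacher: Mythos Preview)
Your first-derivative argument coincides with the paper's. For $J''$ the paper takes a different (and shorter) route: rather than pulling the boundary integral back to $\pa B_*$ and differentiating the surface Jacobian and normal, it rewrites
\[
J'(t)=\int_{\pa E_t}G_{x_t}(\tphi_t\cdot\nu_t)\,d\Hn=-\int_{B_1\sm E_t}\divv(G_{x_t}\tphi_t)\,dy
\]
as a \emph{volume} integral over the complement $B_1\sm E_t$ --- the point being that $G_{x_t}$ is singular at $x_t\in E_t$ but smooth on $B_1\sm B_{r_*/2}$, so the integrand is $C^1$ there --- and then applies Hadamard's formula (\Cref{lem: distrderchar}) exactly as in \Cref{prop: firstshapeder}. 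This delivers both the $\na_y G_{x_t}\cdot\tphi_t$ contribution and the $a_t$ term in one step, with $\pa_t\tphi_t=-(D\tphi_t)\tphi_t$ used only at the end.

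Your pull-back approach is also valid and has the advantage of never needing to worry about the singularity (the boundary stays far from $x_t$), but your bookkeeping sketch is slightly off: in the pulled-back integral $\tphi_t$ has already become the $t$-independent $\Phi$, so the $((D\tphi_t)\tphi_t)\cdot\nu_t$ piece does not arise from $\pa_t\tphi_t$ there --- it must instead come out of the explicit formulas for $\pa_t J_t^\tau$ and $\pa_t\hnu_t$, which you assert but do not verify. The paper's Hadamard-on-the-complement trick sidesteps this computation and mirrors the finite-$p$ proof verbatim.
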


\begin{proof}
    In the following we denote by $E_t= B_*^{t\Phi}$. We first notice that $J(t) = u_t(x_t)$, so that by \Cref{prop: u'infinity}
    \[
        J'(t) = u'_t(x_t) + \na u_t(x_t)\cdot x'_t.
    \]
    Since $x_t$ is the maximum point for $u_t$, then $\na u_t(x_t)=0$. Using the representation formula for $u'_t$, we get 
    \[
        J'(t)=\int_{\pa E_t}^{}G_{x_t}(\tPhi_t\cdot\nu_t)\,dx = -\int_{B_1\sm E_t}\divv(G_{x_t}\tPhi_t)\,dy.
    \] 
    Since $G_{x_t}$ is $C^\infty(B_1\sm B_{r_*/2})$, we can apply the Hadamard's formula to conclude the proof.  
\end{proof}
As done in \Cref{ssect:shapederivatives}, the previous results lead to the following one about the Lagrangian.
\begin{cor}\label{cor: lagrangianInfinity}
    For $\tau\in\R$ and $E\subset B_1$, we define 
     \[
            \Ll_\tau(E):=\J(E)+\tau\abs{E}.
        \]
    For $m\in(0,|B_1|)$ and $E=B_*$ the centered ball of volume $m$, we set
        \begin{equation} 
        \label{eq: tauInfinity}
            \tau=-\restr{G_0}{\partial B_*}.
        \end{equation}
        Then $\Phi\in W^{2,q}(B_1;\R^n)\mapsto \Ll_\tau(B_*^\Phi)$ is of class $C^2$ near 0, and
        \begin{enumerate}[label=(\roman*)]
        \item $\Ll_\tau'(B_*)\equiv 0$
        \item for every $\Phi\in W^{2,q}(B_1;\R^n)$ small enough such that $\Phi$ is normal on $\partial B_*$ and $\widetilde{\Phi}_t=\Phi$ for $t\in[0,1]$, if we denote $L(t)=\Ll_\tau(B_*^{t\Phi})$ then for every $t$,
        \begin{equation}
        \label{eq: L''tInfinitys}
            \begin{split}
	L''(t)=&\int_{\pa B_*^{t\Phi}}\ml(G'_t g+g^2\fr{\pa G_{x_t}}{\pa \nu_0}\mr)(\nu_t\cdot\nu_0)\,d\Hn(y) \\ 
    &+\int_{\pa B_*^{t\Phi}}(w_t-G_0(r_*))(\nu_t\cdot\nu_0)g^2\divv(\nu_0)\,d\Hn(y),
\end{split}   
    \end{equation}
        where $G_0(r_*)=G_0(y)$ for any $y\in\pa B_*$ and $g=\Phi\cdot\nu_0$ (recall that we consider an extension of $\nu_0$ through the projection onto $\pa B_*$).
        \end{enumerate}
        \end{cor}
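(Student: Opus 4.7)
The plan is to mirror the proof of \Cref{cor: lagrangian} from the $L^p$ case, substituting the adjoint state $w_E$ by the Green function $G_{x_E}$, and using the shape derivatives \eqref{eq: J'Infinity}--\eqref{eq: J''Infinity} established just above together with the classical Hadamard formula (\Cref{lem: distrderchar}) applied to the volume. For the $C^2$ regularity of $\Phi\mapsto\Ll_\tau(B_*^\Phi)$, I would combine the $C^2$ regularity of $\Phi\mapsto\J(B_*^\Phi)$ (proved in the preceding proposition) with the $C^\infty$ regularity of $\Phi\mapsto|B_*^\Phi|$, which is a direct consequence of \Cref{lem: distrderchar} with $f\equiv 1$.

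For assertion (i), I would set $V(t)=|B_*^{t\Phi}|$ so that \Cref{lem: distrderchar} gives $V'(0)=\int_{\partial B_*}(\Phi\cdot\nu_0)\,d\Hn$, while \eqref{eq: J'Infinity} yields $J'(0)=\int_{\partial B_*}G_0\,(\Phi\cdot\nu_0)\,d\Hn$. Because $G_0(y)=\ze(|y|)-\ze(1)$ is radial, it is constant on $\partial B_*$, equal to $-\tau$ by the choice \eqref{eq: tauInfinity}. Hence $\Ll_\tau'(B_*)[\Phi]=J'(0)+\tau V'(0)=0$ for every admissible $\Phi$.

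For assertion (ii), a second application of \Cref{lem: distrderchar} to the volume (using $\widetilde{\Phi}_t=\Phi$, hence $\partial_t\widetilde{\Phi}_t\equiv 0$) gives $V''(t)=\int_{\partial B_*^{t\Phi}}(\Phi\cdot\nu_t)\,\divv(\Phi)\,d\Hn$. Combined with \eqref{eq: J''Infinity}, the identity $L''(t)=J''(t)+\tau V''(t)$ reads
\begin{align*}
L''(t)=\,&\int_{\partial B_*^{t\Phi}}\bigl[(\Phi\cdot\nu_t)G'_t+(\Phi\cdot\nu_t)(\nabla_y G_{x_t}\cdot\Phi)\bigr]\,d\Hn\\
&+\int_{\partial B_*^{t\Phi}}(G_{x_t}+\tau)\bigl[(\Phi\cdot\nu_t)\divv(\Phi)-((D\Phi)\Phi)\cdot\nu_t\bigr]\,d\Hn.
\end{align*}
I would then exploit both structural assumptions exactly as in \Cref{cor: lagrangian}. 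On $\partial B_*$, $\Phi=g\nu_0$ with $g=\Phi\cdot\nu_0$, and the constraint $\widetilde{\Phi}_t=\Phi$ propagates this identity to $\partial B_*^{t\Phi}$ after $\nu_0$ and $g$ are extended via the normal projection onto $\partial B_*$. Furthermore, $0=\partial_t\widetilde{\Phi}_t=-(D\Phi)\Phi$, which together with the radial identity $(D\nu_0)\nu_0=0$ (immediate from $\nu_0(x)=x/|x|$) yields $g(\nabla g\cdot\nu_0)=0$, so that $g\,\divv(\Phi)=g^2\divv(\nu_0)$. Writing also $\nabla_y G_{x_t}\cdot\Phi=g\,\partial G_{x_t}/\partial\nu_0$ and $\Phi\cdot\nu_t=g(\nu_0\cdot\nu_t)$, substitution produces precisely \eqref{eq: L''tInfinitys}.

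The main obstacle --- really the only nontrivial point, the rest being symbolic manipulation identical to the $L^p$ setting --- is the geometric bookkeeping on the deformed boundary $\partial B_*^{t\Phi}$: checking that normality $\Phi=g\nu_0$ survives the deformation thanks to $\widetilde{\Phi}_t=\Phi$, that $(D\Phi)\Phi$ vanishes identically, and consequently that the second integrand collapses cleanly to $(G_{x_t}+\tau)\,g^2(\nu_t\cdot\nu_0)\divv(\nu_0)$, matching the structure of the claimed formula.
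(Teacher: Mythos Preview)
Your proposal is correct and follows precisely the approach the paper intends: it explicitly says only ``As done in \Cref{ssect:shapederivatives}, the previous results lead to the following one about the Lagrangian'', i.e.\ it defers entirely to the proof of \Cref{cor: lagrangian}, which is exactly what you reproduce with $G_{x_t}$ in place of $w_t$. The geometric bookkeeping you single out (normality surviving via $\widetilde\Phi_t=\Phi$, $(D\Phi)\Phi=0$, and $g\,\divv(\Phi)=g^2\divv(\nu_0)$) is identical to the argument given there, so nothing is missing.
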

\subsection{Coercivity in 0}
As done for \Cref{thm: fuglede}, we now prove that the Lagrangian is coercive. 
\begin{prop}
    \label{prop: eigentostabInfinity}
    Let $m\in(0,\abs{B_1})$, $B_*$ the centered ball of volume $m$. Then there exist positive constants $c=c(n,m), \eta=\eta(n,m)$ such that for every $\Phi\in W^{2,q}(B_1;\R^n)$ \red{such that $|B_*^\Phi|=|B_*|$} and $\norm{\Phi}_{\infty}<\eta$ we have
    \begin{equation*}
       \Ll_\tau''(B_*)[\Phi,\Phi]\le - c\norm{\Phi\cdot \nu_0}_{L^2(\pa B_*)}^2,
    \end{equation*}
    where $\Ll_\tau$ is the Lagrangian defined in \Cref{cor: lagrangianInfinity}.
\end{prop}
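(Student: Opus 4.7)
The plan is to adapt the strategy of \Cref{prop: eigentostab}. After the standard reduction to $\Phi$ normal to $\pa B_*$ with $\widetilde\Phi_t=\Phi$ near $\pa B_*$ (this does not alter $\Ll_\tau''(B_*)[\Phi,\Phi]$, which only depends on $g:=(\Phi\cdot\nu_0)|_{\pa B_*}$), \Cref{cor: lagrangianInfinity}(ii) applies. The crucial simplification in this $L^\infty$ setting is that $G_0(y)=\ze(|y|)-\ze(1)$ is constant on $\pa B_*$, so the second integral in~\eqref{eq: L''tInfinitys} vanishes at $t=0$, leaving
\[
L''(0)=\int_{\pa B_*}g\,G'_0\,d\Hn + \int_{\pa B_*}g^2\,\fr{\pa G_0}{\pa\nu_0}\,d\Hn.
\]

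The next step is to compute each ingredient explicitly. The normal derivative is $\fr{\pa G_0}{\pa\nu_0}=\ze'(r_*)=-\fr{1}{n\om_n r_*^{n-1}}$ on $\pa B_*$, a negative constant. For the cross term, a Taylor expansion of the image-charge formula $G(x,y)=\ze(|y-x|)-\ze(|x|\cdot|y-\tx|)$ at $x=0$ yields
\[
\na_x G(0,y)=\ml(\ze'(1)-\fr{\ze'(|y|)}{|y|}\mr)y=\fr{1-r_*^n}{n\om_n r_*^n}\,y \qquad\text{on }\pa B_*.
\]
Since $-\De u_0=1$ in $B_*$ and $u_0$ is radial, $D^2 u_0(0)=-I_n/n$, so \Cref{prop: x'infinity} gives $x'_0=n\,\na u'_0(0)$. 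The representation formula~\eqref{eq: u'infty} at $x=0$ then gives
\[
\na u'_0(0)=\int_{\pa B_*}\na_x G(0,y)\,g(y)\,d\Hn=\fr{1-r_*^n}{n\om_n r_*^n}\,M_g,\qquad M_g:=\int_{\pa B_*}y\,g(y)\,d\Hn,
\]
so that $G'_0(y)=\na_x G(0,y)\cdot x'_0=\fr{(1-r_*^n)^2}{n\om_n^2\,r_*^{2n}}(y\cdot M_g)$ and
\[
\int_{\pa B_*}g\,G'_0\,d\Hn = \fr{(1-r_*^n)^2}{n\om_n^2\,r_*^{2n}}|M_g|^2.
\]

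The final step uses the Bessel-type bound $|M_g|^2\le \om_n r_*^{n+1}\,\|g\|_{L^2(\pa B_*)}^2$, which follows from the orthogonality of the coordinate functions $\{y_j\}_{j=1}^n$ in $L^2(\pa B_*)$ (by symmetry $\|y_j\|_{L^2(\pa B_*)}^2=\om_n r_*^{n+1}$, and $\int_{\pa B_*}y_jy_k\,d\Hn=0$ for $j\ne k$). Substituting,
\[
L''(0)\le \fr{(1-r_*^n)^2-1}{n\om_n r_*^{n-1}}\|g\|_{L^2(\pa B_*)}^2=-\fr{r_*(2-r_*^n)}{n\om_n}\|g\|_{L^2(\pa B_*)}^2,
\]
which is the desired coercivity with $c(m,n):=r_*(2-r_*^n)/(n\om_n)>0$ for $r_*\in(0,1)$. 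The case $n=1$ will be treated separately on the two-point set $\pa B_*=\{\pm r_*\}$, as in the proof of \Cref{prop: eigentostab}: the even part of $g$ is controlled by the volume constraint, and the odd part (corresponding to a translation of $B_*$) yields $L''(0)<0$ by direct computation. The main subtlety compared to the $L^p$ analysis is the positive cross-term arising from the moving maximum point $x_t$, which partially cancels the good sign of $\pa_{\nu_0}G_0$; coercivity survives only because this cancellation is quantitative, with a gap measured by the factor $r_*^n(2-r_*^n)>0$.
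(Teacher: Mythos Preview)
Your argument is correct and follows essentially the same route as the paper: reduce to normal $\Phi$, apply \Cref{cor: lagrangianInfinity} at $t=0$, compute $\partial_{\nu_0}G_0=\ze'(r_*)$ and $\na_x G(0,y)$ explicitly, express the cross term through $x'_0$, and control it by the spherical harmonics of degree~1 (equivalently, your Bessel inequality for the coordinate functions $y_j$). The paper carries out the same computation in spherical-harmonic notation and arrives at
\[
L''(0)\le -\frac{1}{P(B_*)}\Bigl(1-\frac{(1-r_*^n)^2}{n^2}\Bigr)\norm{g}_{L^2(\pa B_*)}^2,
\]
whereas you obtain the constant $-\dfrac{r_*(2-r_*^n)}{n\om_n}=-\dfrac{1}{P(B_*)}\bigl(1-(1-r_*^n)^2\bigr)$. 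The discrepancy (a factor $n^2$ in front of the positive cross term) comes from the formula for $x'_0$: the paper uses \Cref{prop: x'infinity} literally as $x'_t=-D^2u_t(x_t)\na u'_t(x_t)$, giving $x'_0=\tfrac{1}{n}\na u'_0(0)$, while you (correctly, from the implicit function theorem applied to $\na u_t(x_t)=0$) invert $D^2u_0(0)=-I_n/n$ and get $x'_0=n\,\na u'_0(0)$. Either formula yields a strictly negative upper bound, so coercivity holds in both cases; your constant is the correct one. Your observation that the volume constraint is not needed for $n\ge 2$ (since $M_g$ is automatically orthogonal to constants and the Bessel bound holds for all $g$) is a nice simplification.
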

\begin{proof}
    Let $\Phi$ be as in \Cref{cor: lagrangianInfinity} (as explained in \autoref{ssect:coercivity}, this is not restrictive), so that 
    \begin{equation}
        \label{eq:l''0Infinity}\Ll_\tau''(B_*)[\Phi,\Phi]=L''(0) = \int_{\pa B_*} \ml(g \,\ml(\na_x G_0\cdot x'_0\mr) + g^2 \,\ml(\na_y G_0\cdot\frac{y}{\abs{y}}\mr)\mr)\,d\Hn(y).
    \end{equation}
    By \Cref{prop: x'infinity}
    \[
        x'_0 = \frac{1}{n}\int_{\pa B_*} \na_x G(0,z)\, g(z)\,d\Hn,
    \]
    where we used again that $D^2 u_0(0)=-1/n\, I_n$ and \Cref{prop: u'infinity}.
    By the definition of $G$ and its symmetries, we may write 
    \[
        \na_x G(0,y) =  \na_y G(y,0) = - (\ze'(r_*)-\ze'(1)r_*)\fr{y}{\abs{y}}, \qquad \qquad \na_y G(0,y) = \ze'(r_*)\frac{y}{\abs{y}}.
    \]
        If $n=1$, then the constraint $\abs{B_*^\Phi}=\abs{B_*}$ gives that $g$ has to be odd. Therefore, direct computations give $\na_xG_0(y)=\frac{1-r_*}{2}\sgn(y)$, and $x'_0=(1-r_*)g(r_*)$, so that~\eqref{eq:l''0Infinity} reads
        \[
            L''(0) = -\ml(1-(1-r_*)^2\mr) g(r_*)^2,
        \]
        which concludes the proof. Let us now assume $n\ge2$,
   and let us decompose $g$ in spherical harmonics $g=\sum_{k,m}\al_{k,m}Y_{k,m}$, recalling that
    \[
        Y_{1,j}(z) =  \sqrt{\frac{n}{ P(B_1)}}\frac{z_j}{\abs{z}}, \qquad \qquad \al_{1,j}=\frac{1}{\|Y_{1,j}\|^2_{L^2(\pa B_*)}}\int_{\pa B_*}g(z)Y_{1,j}\,d\Hn.
    \]
    We notice that we have 
    \[
        \na_x G_0 \cdot x'_0= \frac{P(B_1)}{n^2} (\ze'(r_*)-\ze'(1)r_*)^2\sum_{j=1}^n\al_{1,j}\|Y_{1,j}\|^2_{L^2(\pa B_*)}Y_{1,j}, \qquad \qquad \na_y G_0\cdot \frac{y}{\abs{y}} = \ze'(r_*)
    \]
    Hence, substituting in \eqref{eq:l''0Infinity}, we obtain
    \[
        L''(0) = \sum_{j=1}^{n} \al_{1,j}^2 \|Y_{1,j}\|^4_{L^2(\pa B_*)}\, \frac{P(B_1)}{n^2}\ml(\ze'(r_*)-r_*\ze'(1)\mr)^2+ \sum_{k=1}^{+\infty}\sum_{j=1}^{M(k)}\al_{k,j}^2\|Y_{k,j}\|^2_{L^2(\pa B_*)}\ze'(r_*).
    \]
    Using $\ze'(r_*)=-(P(B_*))^{-1}$ and $P(B_1)\|Y_{1,j}\|^2_{L^2(\pa B_*)}=P(B_*)$, we rewrite
    \[
        \begin{split}
	L''(0) &=  \sum_{j=1}^{n} \al_{1,j}^2 \|Y_{1,j}\|^2_{L^2(\pa B_*)}\ze'(r_*)\ml(\frac{\ml(1-r_*^n\mr)^2}{n^2}-1\mr) +\sum_{k=2}^{+\infty}\sum_{j=1}^{M(k)}\al_{k,j}^2\|Y_{k,j}\|^2_{L^2(\pa B_*)}\ze'(r_*) \\ 
    &\le  -\frac{1}{P(B_*)} \ml(1-\frac{\ml(1-r_*^n\mr)^2}{n^2}\mr) \norm{g}_2^2,
\end{split}
    \]
    where we used that $\norm{g}_2^2=\sum_{k,j}\al_{k,j}^2\|Y_{1,j}\|^2_{L^2(\pa B_*)}$.
\end{proof}
\subsection{Improved continuity}
Also in this case, we need to show the following improved continuity result.
\begin{prop}
    \label{prop: improvcontinInfinity}
       Let $m\in(0,\abs{B_1})$ and $q>n$. Then
        \[
            L''(t)=L''(0)+ \om^{2,q}(\Phi)\,\norm{\Phi}^2_{L^2(\pa B_*)},
        \]
        where $L(t):=\Ll_\tau(B_*^{t\Phi})$, i.e. for every $\eps>0$ there exists $\eta>0$ such that 
        \[\forall \Phi\in W^{2,q}(B_1;\R^n)\textrm{ with }
            \norm{\Phi}_{2,q}\le \eta,
        \qquad\forall t\in[0,1],\qquad
            \abs{L''(t)-L''(0)}\le \eps \norm{\Phi}_{L^2(\pa B_*)}^2.
        \]
    \end{prop}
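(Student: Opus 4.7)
The plan is to mirror the structure of the proof of \Cref{prop: improvcontin}: pull back the formula \eqref{eq: L''tInfinitys} to an integral over $\pa B_*$ via the change of variable $x=(\id+t\Phi)(y)$, split the result into a ``curvature term'' analogous to $b_t(\Phi,\Phi)$ in \eqref{eq: bt} and a ``principal term'' involving $\widehat{G'_t}$ and $\widehat{\na G_{x_t}}$, and show each contribution equals its value at $t=0$ up to a remainder of size $\om^{2,q}(\Phi)\,\norm{\Phi}_{L^2(\pa B_*)}^2$. As in \Cref{sec: imprcont}, it is not restrictive to assume $\Phi$ constant along the normal direction to $\pa B_*$ so that $\tphi_t=\Phi$ and \eqref{eq: L''tInfinitys} applies.

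The role played in \Cref{ssect:continuity} by the adjoint state $w_E$ is taken over here by the Green's function $G_{x_E}$ at the maximum point $x_E=\argmax u_E$. The decisive observation is that, for $\norm{\Phi}_{2,q}$ small, the point $x_t$ stays in a fixed compact neighborhood of the origin (as used in the proof of \Cref{thm: maintheoremInfinity}), so that $G_{x_t}$ is $C^\infty$ in a neighborhood of $\pa B_*$ with bounds uniform in $\Phi$. Combined with the $C^1$ dependence of $\Phi\mapsto x_\Phi$ from \Cref{prop: x'infinity}, this gives
\[
\widehat{G}_{x_t}-G_0(r_*)=\om_\infty^{2,q}(\Phi),\qquad \widehat{\na G_{x_t}}=\na G_0+\om_\infty^{2,q}(\Phi)\qquad\text{on }\pa B_*,
\]
and the curvature term is then controlled exactly as in Step 1 of the proof of \Cref{prop: improvcontin}, using \Cref{lem: geomestim}. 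For the principal term, the analogs of \Cref{lem: hupestim,lem: hwpestim} that are needed are the $L^2$-bound and the continuity estimate
\[
|x'_t|\le C\norm{\Phi}_{L^2(\pa B_*)},\qquad x'_t=x'_0+\om^{2,q}(\Phi)\,\norm{\Phi}_{L^2(\pa B_*)}.
\]
Both follow by combining the formula for $x'_t$ from \Cref{prop: x'infinity} (noting that $D^2 u_t(x_t)$ is uniformly invertible since $D^2 u_0(0)=-\fr{1}{n}I_n$) with the representation obtained by differentiating \eqref{eq: u'infty} in space,
\[
\na u'_t(x_t)=\int_{\pa B_*^{t\Phi}}\na_x G(x_t,y)\,(\tphi_t\cdot\nu_t)(y)\,d\Hn(y).
\]
Pulling this integral back to $\pa B_*$ and exploiting the uniform smoothness of $\na_x G$ away from the diagonal, \Cref{lem: geomestim}, and Cauchy--Schwarz yield both estimates. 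Plugging them into $G'_t(y)=\na_x G(x_t,y)\cdot x'_t$ produces $\widehat{G'_t}=G'_0+\om^{2,q}(\Phi)\,\norm{\Phi}_{L^2(\pa B_*)}$ pointwise on $\pa B_*$, after which the principal term is handled exactly as in Step 2 of the proof of \Cref{prop: improvcontin}.

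The main obstacle is the nonlocal dependence introduced by $x_t$: every Green's function contribution depends on $\Phi$ both locally through $g=\Phi\cdot\nu_0$ on $\pa B_*$ and nonlocally through the moving maximum point $x_t$. The principal technical content is ensuring that the nonlocal piece indeed decays as $\om^{2,q}(\Phi)$ rather than staying $O(1)$, which ultimately reduces to the pointwise-to-$L^2$ bound $|x'_t|\le C\norm{\Phi}_{L^2(\pa B_*)}$. This is the only genuinely new estimate required; everything else is a direct transcription of the arguments in \Cref{ssect:continuity}.
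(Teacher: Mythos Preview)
Your proposal is correct and follows essentially the same approach as the paper's own proof: pull back \eqref{eq: L''tInfinitys} to $\pa B_*$, treat the curvature term via \Cref{lem: geomestim} and the $W^{2,q}$-continuity of $\widehat{G_{x_t}}$, and for the principal term derive $\widehat{G'_t}=G'_0+\om^{2,q}(\Phi)\norm{\Phi}_{L^2(\pa B_*)}$ from the representation formula for $\na u'_t(x_t)$ together with $D^2u_t(x_t)=D^2u_0(0)+\om^{2,q}(\Phi)$. The only cosmetic difference is that you phrase the key estimate at the level of $x'_t$ (boundedness and continuity in $L^2$-norm of $\Phi$) whereas the paper phrases it directly at the level of $G'_t$; these are equivalent since $G'_t=\na_x G_{x_t}\cdot x'_t$ with $\na_x G_{x_t}$ uniformly smooth near $\pa B_*$.
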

    \begin{proof}
        Since the expression of  $L''(t)$ computed in \Cref{cor: lagrangianInfinity} is quite similar to the one computed in \Cref{cor: lagrangian}, we can perform a proof analogous to the one of \Cref{prop: improvcontin}. Thanks to \Cref{prop: u'infinity} and the smoothness of $G(\cdot,y)$ far from $y$, we know that $\Phi \mapsto G_{x_\Phi}\in W^{2,q}(B_1\sm B_{r_*/2})$ is of class $C^1$ in a neighborhood of 0, and in particular 
        \[
        \widehat{G_{x_t}}=G_0+\om^{2,q}_{W^{2,q}(B_1\sm B_{r_*/2})}(\Phi),    
        \]
        where we recall that for every function $h$ we denoted $\widehat{h}=h\circ(\id+t\Phi)$. It remains to prove that, letting $G'_t=\na_x G_{x_t}\cdot x'_t$,
        \begin{equation}
            \label{eq: improvedContGt}
            \widehat{G'_t} = {G'_0}+\om_{L^\infty(\pa B_*)}^{2,q}\norm{\Phi}_{L^2(\pa B_*)}, \qquad \qquad \norm{G'_0}_{L^\infty(\pa B_*)}\le C\norm{\Phi}_{L^2(\pa B_*)}.
        \end{equation}
        Indeed, we have that 
        \begin{equation}
            \label{eq: G't}
            G'_t = \na_x G_{x_t}\cdot D^2 u_t(x_t)\na u'_t(x_t).
        \end{equation}
        Since $G\in C^\infty (B_1\times B_1\sm\diag(B_1\times B_1))$ and since we may assume $\supp\Phi\ssubset B_1\sm B_{r_*/2}$ (as also done in \Cref{prop: u'infinity}) we get
        \begin{equation}
            \label{eq: improvContNablaG}
            \na_x G(x_t,(\id+t\Phi)^{-1}(y)) = \na_x G(0,y) + \om_{L^{\infty}(\pa B_*)}^{2,q}(\Phi),
        \end{equation}
        where we also used the continuity of $\Phi \mapsto x_\Phi$. By Schauder's estimates we get 
        \begin{equation}
            \label{eq: improvContDsquaredu}
        D^2u_t(x_t)=D^2 u_0(0)+\om^{2,q}(\Phi),
        \end{equation} 
        and by the representation formula
        \[
            \na u'_t(x) = \int_{\pa B_*}\na_x G(x,y)g(\nu_0\cdot\nu_t)\,d\Hn
        \]
        we finally obtain
        \begin{equation}\label{eq:improvContNablau'}
            \abs{\na u'_t(x_t)}\le C\norm{g}_{L^2(\pa B_*)}.
        \end{equation}
        Joining~\eqref{eq: G't},~\eqref{eq: improvContNablaG},~\eqref{eq: improvContDsquaredu}, and~\eqref{eq:improvContNablau'} we obtain~\eqref{eq: improvedContGt}. The proof now follows by the same computations in \Cref{prop: improvcontin}.
    \end{proof}
    \begin{proof}[Proof of \Cref{thm: fugledeInfinity}]
        The proof is the same as \Cref{sec:conclusion}.
    \end{proof}
\appendix
\section{Appendix: About the convergence of level sets}

In this section, we add some details to \cite[After formula (68)]{M21} or \cite[Formula (Def)]{MRB22} where the authors deduce a strong convergence of level sets from the convergence of functions, as used in \Cref{ssect: mainproof}. We felt the proofs to be a bit elliptic, and we decided to expand them in this section.
\begin{lemma}
\label{lem: constmeaslevel}
    Let $\Omega\subset\R^n$ be a connected and bounded open set,  $(u_k)$ a sequence of functions in $L^\infty(\Om)$ and $u\in C^0(\Om)\cap L^{\infty}(\Om)$ such that  $\abs{\Set{u=\essinf_\Om u}}=0$. For $t_k,t_*\in \R$ we assume
    \[
    \forall k\in \N, \;\;\abs{\set{u_k>t_k}}=\abs{\set{u>t_*}}\in(0,\abs{\Om}),\qquad\textrm{ and }\qquad        u_k\xrightarrow{L^\infty(\Om)}u.
    \]
    Then $t_*=\lim_k t_k$.
\end{lemma}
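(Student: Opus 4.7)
The argument will be by contradiction. First, $(t_k)$ is bounded: since $|\{u_k>t_k\}|\in(0,|\Omega|)$, one has $\essinf_\Omega u_k\le t_k<\esssup_\Omega u_k$, and $L^\infty$-convergence of $u_k$ to $u$ keeps $(\essinf_\Omega u_k)$ and $(\esssup_\Omega u_k)$ bounded. It therefore suffices to show that every convergent subsequence $t_{k_j}\to T$ must satisfy $T=t_*$, so assume for contradiction $T\ne t_*$.

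For every $\varepsilon>0$, if $j$ is large enough that $\|u_{k_j}-u\|_\infty<\varepsilon/2$ and $|t_{k_j}-T|<\varepsilon/2$, the inclusions
\[
\{u>T+\varepsilon\}\subset \{u_{k_j}>t_{k_j}\}\subset \{u>T-\varepsilon\}
\]
hold; combined with the hypothesis $|\{u_{k_j}>t_{k_j}\}|=|\{u>t_*\}|$, and letting $\varepsilon\to 0^+$ via monotone continuity of measure (noting $\{u>T+\varepsilon\}\uparrow \{u>T\}$ and $\{u>T-\varepsilon\}\downarrow \{u\ge T\}$), one deduces
\[
|\{u>T\}|\le |\{u>t_*\}|\le |\{u\ge T\}|.
\]
I then split on the sign of $T-t_*$. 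If $T>t_*$, then $\{u\ge T\}\subset\{u>t_*\}$ forces equality on the right, hence $|\{t_*<u<T\}|=0$. If $T<t_*$, then $\{u>t_*\}\subset\{u>T\}$ forces equality on the left, hence $|\{T<u\le t_*\}|=0$, which also gives $|\{u=t_*\}|=0$ since $\{u=t_*\}\subset\{T<u\le t_*\}$.

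In either case, continuity of $u$ turns the measure-zero open set $\{t_*<u<T\}$ or $\{T<u<t_*\}$ into an actually empty one, so $u(\Omega)$ contains no value in the open interval between $T$ and $t_*$. Since $\Omega$ is connected and $u$ is continuous, $u(\Omega)$ is a connected subset of $\R$, i.e.\ an interval, and must therefore be contained either in $(-\infty,\min(T,t_*)]$ or in $[\max(T,t_*),+\infty)$; the former yields $|\{u>t_*\}|=0$, and the latter yields $|\{u>t_*\}|=|\Omega|$ (in the case $T<t_*$, using $|\{u=t_*\}|=0$ to conclude $\{u>t_*\}=\Omega$ up to a null set). Both contradict the assumption $|\{u>t_*\}|\in(0,|\Omega|)$. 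The main obstacle is the clean passage to the limit as $\varepsilon\to 0^+$: one must distinguish $\{u>T\}$ from $\{u\ge T\}$ because of the potential jump $|\{u=T\}|$ in the distribution function, and then convert the measure-zero statement about an open set into actual emptiness so that connectedness of $\Omega$ can be brought to bear. The auxiliary hypothesis $|\{u=\essinf_\Omega u\}|=0$, together with the assumption $|\{u>t_*\}|<|\Omega|$, ensures the non-degenerate setting $t_*>\essinf_\Omega u$ (otherwise one would have $|\{u>t_*\}|=|\Omega|$), though it does not itself enter the central chain of implications.
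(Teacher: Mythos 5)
Your proof is correct, and the essential mechanism is the same as the paper's: pass to a convergent subsequence $T=\lim t_{k_j}$, sandwich the measure $m$ between $|\{u>T+\eps\}|$ and $|\{u>T-\eps\}|$, and then use the fact that, by connectedness of $\Om$ and continuity of $u$, a level strip of measure zero must in fact be empty. Where you diverge from the paper is in how that last fact is deployed. The paper abstracts it into a separate claim — that the distribution function $\mu(t)=|\{u>t\}|$ is strictly decreasing on $(\essinf u,\esssup u)$ — and then deduces $\bar t-2\eps\le t_*\le\bar t+2\eps$ from $\mu(\bar t+2\eps)\le\mu(t_*)\le\mu(\bar t-2\eps)$; this requires knowing $t_*\in(\essinf u,\esssup u)$, for which it invokes the hypothesis $|\{u=\essinf u\}|=0$. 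You instead take the limit $\eps\to 0^+$ first (carefully distinguishing $\{u>T\}$ from $\{u\ge T\}$), and then argue by cases on the sign of $T-t_*$, showing that $u(\Om)$ would have to miss the nonempty open interval between $T$ and $t_*$, which by connectedness pushes $|\{u>t_*\}|$ to $0$ or $|\Om|$. A nice by-product, which you correctly point out, is that your argument never actually uses the hypothesis $|\{u=\essinf u\}|=0$: in the one place where a jump of $\mu$ at $t_*$ could cause trouble (the case $T<t_*$ with $u(\Om)\subseteq[t_*,\infty)$), you derive $|\{u=t_*\}|=0$ from the measure-zero strip rather than from the hypothesis. So your proof is marginally more general, at the cost of an explicit case split that the paper's monotonicity lemma packages away.
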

\begin{proof}
    We define $m=|\{u>t_*\}|=|\{u_k>t_k\}|$. Since functions $u_k$ converge uniformly, they are equi-bounded. In particular, this ensures that the sequence $t_k$ is bounded in $\R$, otherwise we would either get for large $k$ that $t_k>\sup_j\norm{u_j}_\infty$, and $\abs{\set{u_k>t_k}}=0$, or $t_k\le \inf_j\essinf u_j$ and $\abs{\set{u_k>t_k}}=|\Om|$. Therefore, up to passing to a subsequence, we can define $\bar{t}=\lim_k t_k$, and let $\eps>0$. By $L^\infty$ convergence we get that for large $k$
    \[
        \set{u>\bar{t}+2\eps}\subseteq \set{u_k>\bar{t}+\eps}\subseteq \set{u_k>t_k},
    \]
    and therefore defining $\mu(t)=|\{u>t\}|$ (this differs from \Cref{def: mu} as we did not assume $u$ to be non-negative), we get
    \[
        \mu(\bar{t}+2\eps)\le m.
    \]
    Analogously 
    \[
        \set{u>\bar{t}-2\eps}\supseteq\set{u_k>\bar{t}-\eps}\supseteq\set{u_k>t_k},
    \]
    and so
    \begin{equation}
        \label{eq: mut-eps}
        \mu(\bar{t}+2\eps)\le m\le \mu(\bar{t}-2\eps).
    \end{equation}
    We now observe that $\mu$ is strictly decreasing in $(\essinf_\Om u,\|u\|_\infty)$. Indeed, if $\mu$ is constant in $(t_1,t_2)$ with $\essinf u<t_1<t_2<\|u\|_\infty$, then $\abs{\Set{t_1< u\le t_2}}=0$. On the other hand, since $\Om$ is connected and $u$ is continuous, $u^{-1}((t_1,t_2))$ is an open non-empty set, which has positive measure, thus a contradiction. Therefore, as by assumption $t_*\in(\essinf u,\|u\|_\infty)$,
    \eqref{eq: mut-eps} implies
    \[
        \bar{t}-2\eps\le t_*\le\bar{t}+2\eps, 
    \]
    and since $\eps$ is arbitrary, $\bar{t}=t_*$. The argument does not depend on the choice of the subsequence, and the conclusion follows.
\end{proof}
    
\begin{prop}
\label{lem: convdef}
    Let $q>n$, let $\Om\subset\R^n$ be an open bounded connected set, and let $u\in W^{3,q}(\Om)$ be a function such that for some positive constant $k$
    \[
    \abs{\na u}(x)\ge k, \qquad \qquad \forall x\in u^{-1}(0).
    \]
    Assume in addition that  $d(u^{-1}(0),\partial \Om)>0$. Let $u_j\in C^{2,\al}(\Om)$ be a sequence of functions such that 
    \[
        u_j\xrightarrow{W^{2,q}(\Om)}u.
    \]
    Then for every $j$ large enough $u_j^{-1}(0)$ is a $C^{1,s}$ hypersurface, with $s=1-n/q$, and there exist deformations $\Phi_j\in W^{2,q}(\Om;\R^n)$ such that
    \begin{enumerate}[label=(\roman*)]
        \item $\Phi_j$ are orthogonal to $u^{-1}(0)$;
        \item $\displaystyle
            u_j^{-1}(0)=(\id +\Phi_j)(u^{-1}(0));
        $
        \item $\displaystyle\lim_j\norm{\Phi_j}_{2,q}=0$
    \end{enumerate}
\end{prop}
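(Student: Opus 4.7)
The plan is to build $\Phi_j$ out of the normal flow of $u_j$ along $\nu:=\na u/|\na u|$. Since $u^{-1}(0)$ is compact (by $d(u^{-1}(0),\pa\Om)>0$), $|\na u|\ge k$ on $u^{-1}(0)$, and $\na u\in C^{1,s}(\Om)$ by Sobolev embedding (with $s=1-n/q$, since $q>n$), there is an open neighborhood $U\ssubset\Om$ of $u^{-1}(0)$ on which $|\na u|\ge k/2$; then $\nu$ is well-defined on $U$ and belongs to $W^{2,q}(U;\R^n)$. Sobolev embedding also gives $u_j\to u$ in $C^{1,s}$, so for $j$ large $|\na u_j|\ge k/3$ on $\overline U$, and, by uniform positivity of $|u|$ on compact subsets of $U\setminus U'$ (for any smaller open neighborhood $U'$ of $u^{-1}(0)$) together with uniform convergence $u_j\to u$, we have $u_j^{-1}(0)\subset U'$ for $j$ large enough. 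The standard implicit function theorem then shows $u_j^{-1}(0)$ is a $C^{1,s}$ hypersurface.

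Next I would apply the implicit function theorem to solve
\[
u_j\bigl(x+t_j(x)\nu(x)\bigr)=u(x)\qquad\text{for }x\in U,
\]
for a unique small scalar function $t_j$. Indeed, $F(x,t;v):=v(x+t\nu(x))-u(x)$ satisfies $F(x,0;u)=0$ and $\pa_t F(x,0;u)=|\na u(x)|\ge k/2$ on $\overline U$, so for $v=u_j$ sufficiently $C^1$-close to $u$ the equation admits a unique small solution that depends continuously on $v$. In particular $x+t_j(x)\nu(x)\in u_j^{-1}(0)$ whenever $x\in u^{-1}(0)$.

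Differentiating the defining relation gives
\[
\na t_j = \frac{\na u(x)-\bigl(I+t_j D\nu(x)\bigr)^{T}\na u_j(x+t_j\nu)}{\na u_j(x+t_j\nu)\cdot\nu(x)},
\]
and since $W^{2,q}(U)$ is an algebra and stable under composition with near-identity $W^{2,q}$-diffeomorphisms (the key point being $q>n$, so that $W^{2,q}\hookrightarrow C^{1,s}$), one bootstraps to $t_j\in W^{2,q}(U)$ together with the quantitative estimate
\[
\norm{t_j}_{W^{2,q}(U)} \le C\,\norm{u_j-u}_{W^{2,q}(\Om)}\longrightarrow 0.
\]
This $W^{2,q}$ regularity together with the norm estimate is the principal technical obstacle: it amounts to a Sobolev-space implicit function theorem in which the solution depends continuously (in $W^{2,q}$) on the parameter $v$, which requires careful use of the algebra and composition properties of $W^{2,q}$.

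Finally I would patch $t_j$ into a global deformation via a cutoff $\chi\in C^\infty_c(U)$ equal to $1$ near $u^{-1}(0)$, setting $\Phi_j:=\chi\,t_j\,\nu$ extended by $0$ outside $U$. Then $\Phi_j\in W^{2,q}(\Om;\R^n)$, $\norm{\Phi_j}_{2,q}\to 0$, and $\Phi_j|_{u^{-1}(0)}=t_j\nu$ is normal to $u^{-1}(0)$, which yields items~(i) and~(iii). By construction $(\id+\Phi_j)(u^{-1}(0))\subset u_j^{-1}(0)$; for the reverse inclusion, for $j$ large $\id+\Phi_j$ is a $W^{2,q}$-diffeomorphism of $\Om$, $u_j^{-1}(0)$ lies in a tubular neighborhood of $u^{-1}(0)$, and the uniqueness part of the implicit function theorem ensures that every point of $u_j^{-1}(0)$ has a unique preimage on $u^{-1}(0)$ along the normal direction, which gives item~(ii).
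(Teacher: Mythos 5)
Your proposal is correct and follows essentially the same route as the paper: solve $u_j(x+t_j(x)\nu(x))=u(x)$ pointwise along the normal field $\nu=\pm\na u/|\na u|$ on a tubular neighborhood of $u^{-1}(0)$, compute $\na t_j$ by implicit differentiation, use that $W^{2,q}$ ($q>n$) is an algebra stable under composition with near-identity $W^{2,q}$ maps to get $\norm{t_j}_{2,q}\to0$, and truncate by a cutoff; the reverse inclusion follows from the tubular-neighborhood structure and uniqueness of $t_j(x)$. The only cosmetic deviation is that the paper replaces your appeal to an abstract implicit function theorem by a direct monotonicity argument for $t\mapsto u_j(x+t\nu(x))$ on $[-\al_0,\al_0]$ (showing $\pa_t F_j\le -k/4$ there and a sign change at the endpoints), which avoids having to formulate a Banach-space IFT in the $W^{2,q}$ parameter.
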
 

\begin{proof}
\begin{proofItemize}
\item
{\bf Step 1: }we first notice that for every $\eps>0$ there exists $t_0$ such that if $\abs{t}\le t_0$ then 
    \[
        u^{-1}(t)\subseteq (u^{-1}(0))^\eps,
    \]
where we use the notation 
        $(K)^t =\Set{p\in \R^n\;|\; d(p,K)\le t}$,
    for the \emph{outer parallel} set.
    
    Indeed, let us assume by contradiction that there exist ${\eps_0}>0$ and points $y_k\in u^{-1}(t_k)$ with $\lim_k t_k=0$ such that
    \[
        \forall k\in\N, \qquad d(y_k,u^{-1}(0))> \eps_0.
    \]
    As $\Om$ is bounded, up to a subsequence, we may assume that $y_k$ converge to some point $\bar{y}\in\overline{\Om}$. The continuity of $u$ gives $u(\bar{y})=0$, so that $\bar{y}\in u^{-1}(0)$. On the other hand,
    \[
    d(\bar{y},u^{-1}(0))\ge \eps_0,
    \]
    which is a contradiction.
\item {\bf Step 2: }
    We now want to extend the non-degeneracy property of the gradient of $u$ to the functions $u_j$ on their level sets $u_j^{-1}(0)$. We notice that by uniform convergence and by the previous step, for every $\eps>0$ there exist $t_0>0$ and $j_0$ such that
    \begin{equation}
    \label{eq: levelujnearlevelu}
       \forall j\ge j_0, \qquad u_j^{-1}(0)\sbs u^{-1}\ml((-t_0,t_0)\mr)\sbs (u^{-1}(0))^\eps.
    \end{equation}
    On the other hand, $\na u$ is uniformly continuous in $\Om$ 
    so there exists $\al_0>0$ such that
    \[
        \forall (x,z)\in \Om\textrm{  s.t. } |z-x|\le \al_0, \qquad\abs{\na u(z)-\na u(x)}\le \fr{k}{4}.
    \]  
In particular,
\[
\forall x\in (u^{-1}(0))^{\alpha_0}, \qquad |\nabla u(x)|\ge \frac{k}{2}.
\]
Also, $\na u_j$ uniformly converge to $\na u$, so  for $j$ large enough
    \[    
        \abs{\na u_j(x)-\na u(x)}\le \fr{k}{8} \qquad \forall x\in \Om.\]
    From the previous estimates, we get $\abs{\na u_j}(x)>0$ for every $x\in u_j^{-1}(0)$ and $j$ large enough, which implies that the sets $u_j^{-1}(0)$ are $C^{1,s}$ hypersurfaces. 
\item {\bf Step 3: }
    Let us denote $\Om_1=(u^{-1}(0))^{\alpha_0}$ and 
    \[
     \forall x\in\Om_1,\qquad    \nu(x)=-\fr{\na u}{\abs{\na u}}(x).
    \]
    We show that for every $x\in\Om_1$, the line 
    \[
    t\in[-\al_0,\al_0]\mapsto x+t\nu(x)
    \] 
    intersects $u^{-1}(0)$. Let $x\in \Om_1$. Up to choosing a smaller $\al_0$, using that $u^{-1}(0)$ is far from $\pa \Om$, we may assume that $(\Om_1)^{\al_0}\sbs \Om$. By Lagrange theorem applied to the function $\alpha\mapsto u(x+\al\nu(x))-u(x)-\al\nabla u(x)\cdot\nu(x)$, there exists $z$ on the segment between $x$ and $x+\al_0\nu(x)$ such that
    \[
        u(x+\al_0\nu(x))=u(x)-\al_0\abs{\na u}(x)+\al_0\ml(\na u(z)-\na u(x)\mr)\cdot\nu(x)
        \le u(x)-\fr{\al_0 k}{4}.
    \]
    Similarly we get
    \[
        u(x-\al_0\nu(x))\ge u(x)+\fr{\al_0 k}{4}.
    \]
    
\item {\bf Step 4:}
    Now we construct the diffeomorphisms $\Phi_j$. For every $x\in \Om_1$, let us define 
    \[  
        F_j(x,t)=u_j(x+t\nu(x))-u(x).
    \]
    We show that $F_j$ is strictly monotone in $t$ and that it always admits a zero. First we notice that
    \[
        \pa_t F_j(x,t)=\na u_j(x+t\nu(x))\cdot\nu(x).
    \]
and from Step 2 we deduce
    \[
    \forall t\in[-\alpha_0,\alpha_0],\quad \forall j\ge j_0, \qquad\pa_t F_j(x,t)\le -k/4.
    \] 
    By uniform convergence and the previous step, for $j$ large enough we have
    $F_j(x,\al_0)< 0 < F_j(x,-\al_0)$. Therefore, there exists a unique $t_j(x)\in [-\al_0,\al_0]$ such that $F_j(x,t_j(x))=0$, or, equivalently
    \begin{equation}
    \label{eq: F_j=0}
        u_j(x+t_j(x)\nu(x))=u(x).
    \end{equation}
    \item {\bf Step 5:}
    We claim that $\Phi_j(x):=t_j(x)\nu(x)$ works (up to multiplying it by a cutoff function). 

    Property \textit{(i)} follows by noticing that by construction $\nu$ is orthogonal to $u^{-1}(0)$.
    
    Let us now prove \textit{(ii)}. First \eqref{eq: F_j=0} for $x\in u^{-1}(0)$ implies
    \[
        (\id+\Phi_j)(u^{-1}(0))\subset  u_j^{-1}(0).
    \]
    For the converse inclusion, let $y\in u_j^{-1}(0)$. The point $y$ can be written as
    \[
        y=x+d(y,u^{-1}(0))\bar{\nu}(x)
    \]
    with $x\in u^{-1}(0)$ and either $\bar{\nu}=\nu(x)$ or $\bar{\nu}=-\nu(x)$. On the other hand, by \eqref{eq: levelujnearlevelu}, we have for large $j$
    \[
        u_j^{-1}(0)\sbs (u^{-1}(0))^{\al_0},
    \]
    so that $d(y,u^{-1}(0))<\al_0$. As a consequence, $d(y,u^{-1}(0))=\abs{t_j(x)}$ and we have proved that
    \[
        (\id+\Phi_j)(u^{-1}(0))= u_j^{-1}(0).
    \]

   Finally, \textit{(iii)} follows by noticing that \eqref{eq: levelujnearlevelu} implies that $t_j$ converges uniformly to $0$ and by computing
    \[
    \begin{split}
        \na t_j(x)&=-\fr{\pa_x F_j(x,t_j(x))}{\pa_t F_j(x,t_j(x))} \\
        &=\abs{\na u}(x)\fr{\na u_j(x+\Phi_j(x))-t_j(x)D\nu(x)\na u_j(x+\Phi_j(x))-\na u(x)}{\na u_j(x+\Phi_j(x))\cdot\na u(x)}.
    \end{split}
    \]
Since $W^{2,q}$ is an algebra, computing the derivatives of $t_j$ we iteratively get the convergence of higher order derivatives.
    \end{proofItemize}
\end{proof}

\newpage
\printbibliography[heading=bibintoc]

\noindent Paolo Acampora\\Dipartimento di Bioscienze e Territorio, University of Molise, and\\
Dipartimento di Matematica e Applicazioni R.Caccioppoli, University of Naples Federico II\\
paolo.acampora@unimol.it
\medskip

\noindent Jimmy Lamboley\\\'Ecole Normale Supérieure, CNRS, PSL University, DMA, and\\ Sorbonne Université, Universit\'e Paris Cit\'e, IMJ-PRG, F-75005 Paris, France\\jimmy.lamboley@ens.fr
\end{document}